\documentclass[11pt,reqno]{amsproc}
\usepackage{amsmath,latexsym,amssymb,verbatim,amsbsy,amsthm,mathrsfs,tikz,times}
\usepackage[margin=1in]{geometry}
\usepackage[shortlabels]{enumitem}
\usepackage{empheq}
\usepackage{enumerate}
\title{On the well-posedness of stochastic Boussinesq equations with cylindrical multiplicative noise}

\author[D. Alonso-Or\'an]{Diego Alonso-Or\'an}
\address{Instituto de Ciencias Matem\'aticas CSIC-UAM-UC3M-UCM, 28049 Madrid, Spain.}
\email{diego.alonso@icmat.es}

\author[A. Bethencourt de Le\'on]{Aythami Bethencourt de Le\'on}
\address{Department of Mathematics, Imperial College, London SW7 2AZ, UK. }
\email{ab1113@ic.ac.uk}

\usepackage{color}
\usepackage[colorlinks=true, pdfstartview=FitV, linkcolor=blue, citecolor=blue, urlcolor=blue]{hyperref}

\theoremstyle{plain}
\newtheorem{theorem}{Theorem}[section]
\newtheorem{definition}[theorem]{Definition}
\newtheorem{lemma}[theorem]{Lemma}
\newtheorem{proposition}[theorem]{Proposition}

\theoremstyle{definition}
\newtheorem{remark}[theorem]{Remark}

\def\tilde{\widetilde}
\numberwithin{equation}{section}
\renewcommand\theequation{\thesection.\arabic{equation}}

\renewcommand\hat{\widehat}


\DeclareMathOperator{\diff}{d\!}

\newcommand{\norm}[1]{\left\lVert#1\right\rVert}    
\newcommand\abs[1]{\left|#1\right|}    



\begin{document}

\begin{abstract}
The Boussinesq equations are fundamental in meteorology. Among other aspects, they aim to model the process of front formation. We use the approach presented in \cite{Principal} to introduce stochasticity into the incompressible Boussinesq equations. This is, we introduce cylindrical transport noise in a way that the geometric properties in the Euler-Poincar\'{e} formulation are preserved. 
One of our main results establishes the local well-posedness of regular solutions for these new stochastic Boussinesq equations. We also construct a blow-up criterion and derive some general estimates, which are crucial for showing well-posedness of a wide range of similar SPDEs. \hfill \today
\end{abstract}

\maketitle

\setcounter{tocdepth}{1}
\tableofcontents
\section{Introduction}
The Boussinesq equations are widely considered as a fundamental model for the study of large scale atmospheric and oceanic flows, built environment, dispersion of dense gases, and internal dynamical structure of stars, \cite{Pedlosky,Ric,Busse}. Beyond its considerable physical relevance, this system of equations has generated substantial interest in the mathematics community due to its rich nonlinear and coupling structure. The physical derivation of the Boussinesq equations dates back to the early 1900's, and more concretely, to the work of Rayleigh \cite{Rayleigh}. He proposed the study of buoyancy driven fluid convection problems by using the equations of Boussinesq \cite{Bouss} in order to explain the experimental work of B\'{e}nard \cite{Ben}.

The 2D Boussinesq equations retain some key features of the well-known Euler and Navier-Stokes equations, as for instance, the vortex stretching mechanism. Moreover, the Boussinesq flow can be interpreted as an analogue of the 3D axisymmetric flow, where vortex stretching terms appear in the vorticity formulation. The Boussinesq equations with various different boundary conditions, on bounded and unbounded domains, have attracted considerable attention and there is a vast literature on the mathematical theory of the deterministic Boussinesq equations \cite{LuoHou1,LuoHou2,ElgJeo,CanDiBenn,Chae,HouLi,ChaeWu,ConDoe}. The fundamental issue of whether classical solutions of the 2D Boussinesq equations can develop finite time singularities remains an outstanding open problem, which is even discussed in Yudovich's ``eleven great problems of mathematical hydrodynamics'' \cite{Yudo03}.

Therefore, the Boussinesq equations encompass tremendously fundamental physical laws, and hence, it is natural to wonder what the motivations to introduce stochasticity in deterministic models like this one are, an idea which has been lately popularised. In particular, introducing stochasticity in a deterministic model in an intelligent fashion can help account for two things: \\
\begin{itemize}[leftmargin=*]
\item Since most deterministic atmospheric models are numerically intractable, they cannot be solved accurately enough with the computer power we have nowadays at our disposal. Moreover, the sensitivity with respect to initial conditions often makes them unreliable, unstable, and unleashes chaotic behaviour. However, the statistical averages and properties of the solutions are typically much more robust. For this reason, this kind of stochastic solutions is incredibly useful to deal with the small unresolved scales. The process of treating this type of problems is called stochastic parameterisation in the literature (see, for example, \cite{LesQua,ZidiFre}). \\
\item Nowadays, the uncertainty due to the radiation phenomena coming from the clouds is considered to be the most drastic source of unpredictability in weather and climate modelling, and it is not fully understood yet. Adding stochasticity might help account for the uncertainty coming from these events and shed some light onto this unknown and complex process. \\
\end{itemize} 

A vast literature exists concerning stochastic fluid dynamics equations. For instance, the stochastic Navier-Stokes equation has been largely studied, starting with the pioneering works of \cite{BenTem72,BenTem73}, and substantial developments have been carried out, see \cite{BenFre,FlaGat,FraRom,GlaZia} and the references therein. Several results have also been established regarding the two and three-dimensional Euler equation \cite{Bes99,BesFla,Kim02,Kim09,CruFlaMal07,GlaVic}. Recently, solution properties of a newly derived stochastic model of the Euler equation were investigated in \cite{CriHolFla,FlaLuo}. This model was proposed by D. Holm in \cite{Principal} and presents an innovative geometric approach for including stochastic processes as cylindrical transport noise in PDE systems via a stochastic variational principle. This new formulation endows the stochastic equations with a rich geometric structure inherited from its deterministic counterpart.  \\

The goal of this paper is manifold: first, we establish local existence and uniqueness of solutions of the system \eqref{mainmain1}-\eqref{mainmain2}, for initial vorticity $\omega_0$ and potential temperature $\theta_0$ in the spaces $H^{2}(\mathbb{T}^{2}, \mathbb{R})$ and $H^{3}(\mathbb{T}^{2}, \mathbb{R}),$ respectively. Second, we prove a blow-up criterion, which partially recovers the most general blow-up criterion for the deterministic case. Finally, we will provide some important derivative estimates, and comment on why they are key when showing local well-posedness of a wide range of stochastic fluid problems (where the stochastic noise depends on the gradient of the velocity).  \\ 
\subsection*{Main results.} In this paper we address the well-posedness of a stochastic version of the 2D incompressible Boussinesq equations,
given by\\
\begin{eqnarray}
 \diff\omega + \mathcal{L}_{u} \omega \diff t+\displaystyle\sum_{i=1}^{\infty}  \mathcal{L}_{\xi_{i}}\omega\circ \diff B^{i}_{t}&=& \partial_{x} \theta \diff t,  \label{mainmain1} \\
 \diff \theta + \mathcal{L}_{u} \theta \diff t + \displaystyle\sum_{i=1}^{\infty}  \mathcal{L}_{\xi_{i}}\theta\circ \diff B^{i}_{t}&=& 0,  \label{mainmain2}
\end{eqnarray}
where $\omega=\nabla^{\perp}\cdot u=\partial_{x}u_{2}-\partial_{y}u_{1}$ is the vorticity, $u$ is the velocity field, and $\theta$ denotes the potential temperature. We assume $\xi_{i},$ $i \in \mathbb{N},$ are prescribed divergence-free vector fields, and $B^{i},$ $i\in\mathbb{N},$ are a family of scalar independent Brownian motions. The system is defined on $\mathbb{T}^{2}\times[0,\infty)$, with $\mathbb{T}^{2}=[-\pi,\pi]^{2}=\mathbb{R}^{2}/(2\pi\mathbb{Z}^{2})$ being the two-dimensional square torus (therefore supplemented with periodic boundary conditions). The derivation of the stochastic 2D Boussinesq equations is carried out in full detail in Section \ref{derivation:sto:bouss}.	Concretely, the aim of the present paper is to prove the following three theorems: 
\begin{theorem}\label{mainth:intro}
Let $(\omega_{0},\theta_{0})\in H^{2}(\mathbb{T}^{2},\mathbb{R})\times H^{3}(\mathbb{T}^{2},\mathbb{R})$, then there exists a unique local solution  to the stochastic 2D Boussinesq equations  \eqref{mainmain1}-\eqref{mainmain2} in $H^{2}(\mathbb{T}^{2},\mathbb{R})\times H^{3}(\mathbb{T}^{2},\mathbb{R})$. Namely, if $\omega^1,\omega^2:\mathbb{T}^{2}\times \Xi \times [0,\tau]\to \mathbb{R}$, $\theta^1,\theta^2:\mathbb{T}^{2}\times \Xi\times [0,\tau]\to \mathbb{R}$ are two solutions defined up to the same stopping time $\tau,$ then $\omega^{1}=\omega^{2}$ and $\theta^{1}=\theta^{2}$, a.s. 
\end{theorem}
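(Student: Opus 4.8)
\emph{Proof sketch.} The plan is to run the classical program for local well-posedness of quasilinear SPDEs --- It\^o reformulation, a regularization, uniform a priori estimates, compactness, and pathwise uniqueness --- exploiting throughout the special algebraic structure of cylindrical transport noise. First I would convert the system to It\^o form: the Stratonovich corrections replace $\sum_{i}\mathcal{L}_{\xi_{i}}v\circ\diff B^{i}_{t}$ (with $v=\omega$ or $v=\theta$) by $\sum_{i}\mathcal{L}_{\xi_{i}}v\,\diff B^{i}_{t}+\tfrac12\sum_{i}\mathcal{L}_{\xi_{i}}^{2}v\,\diff t$, and I would recover the velocity from the vorticity through the stream function, $u=\nabla^{\perp}\Delta^{-1}\omega$ with zero mean, so that $\|u\|_{H^{s+1}}\simeq\|\omega\|_{H^{s}}$ by elliptic regularity. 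Since each $\xi_{i}$ is divergence free, the operator $\mathcal{L}_{\xi_{i}}=\xi_{i}\cdot\nabla$ is skew-adjoint on $L^{2}(\mathbb{T}^{2})$; this single fact drives all the cancellations below.

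Next, construct approximate solutions by a Galerkin truncation onto the first $N$ Fourier modes (or, equivalently for our purposes, a vanishing-viscosity plus mollified-transport regularization), each of which admits global-in-time solutions by standard theory. The core of the argument is a uniform a priori bound: applying It\^o's formula to $\|\omega^{N}(t)\|_{H^{2}}^{2}+\|\theta^{N}(t)\|_{H^{3}}^{2}$, one controls the transport terms $u^{N}\cdot\nabla\omega^{N}$ and $u^{N}\cdot\nabla\theta^{N}$ by the usual Kato--Ponce commutator estimates together with the two-dimensional embeddings $H^{2}(\mathbb{T}^{2})\hookrightarrow L^{\infty}$ and $H^{1}(\mathbb{T}^{2})\hookrightarrow L^{4}$, obtaining a bound of cubic type $C\big(\|\omega^{N}\|_{H^{2}}^{2}+\|\theta^{N}\|_{H^{3}}^{2}\big)^{3/2}$; the buoyancy coupling $\partial_{x}\theta$ in \eqref{mainmain1} costs exactly one derivative of $\theta$, which is what forces the asymmetric setting $\omega\in H^{2}$, $\theta\in H^{3}$. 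Besides a martingale controlled by the Burkholder--Davis--Gundy inequality, the noise produces the quadratic-variation term $\sum_{i}\|\mathcal{L}_{\xi_{i}}\omega^{N}\|_{H^{2}}^{2}$ (and its $H^{3}$ analogue for $\theta^{N}$) together with the It\^o--Stratonovich drift $\sum_{i}\langle\mathcal{L}_{\xi_{i}}^{2}\omega^{N},\omega^{N}\rangle_{H^{2}}$; the divergence-free condition makes their top-order parts cancel exactly, leaving a commutator term bounded by $C_{\xi}\big(\|\omega^{N}\|_{H^{2}}^{2}+\|\theta^{N}\|_{H^{3}}^{2}\big)$, where $C_{\xi}<\infty$ by the standing summability assumptions on $\{\xi_{i}\}$. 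A stochastic Gronwall argument, after localizing by $\tau^{N}_{R}=\inf\{t:\|\omega^{N}(t)\|_{H^{2}}^{2}+\|\theta^{N}(t)\|_{H^{3}}^{2}\ge R\}$, then yields a positive stopping time $\tau$ and a bound $\mathbb{E}\sup_{[0,\tau]}\big(\|\omega^{N}\|_{H^{2}}^{2}+\|\theta^{N}\|_{H^{3}}^{2}\big)\le C$, both independent of $N$.

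Existence then follows by compactness: the laws of $(\omega^{N},\theta^{N})$ are tight in $C([0,\tau];H^{2-\delta}\times H^{3-\delta})$ --- the uniform bounds plus fractional-in-time estimates read off from the equation --- so Prokhorov and Skorokhod produce an a.s.\ convergent subsequence on a new probability space whose limit is a martingale solution in $H^{2}\times H^{3}$; weak time-continuity together with the a priori norm control upgrades this to continuity into the energy spaces. Pathwise uniqueness I would prove directly: given two solutions $(\omega^{1},\theta^{1})$, $(\omega^{2},\theta^{2})$ on the same stochastic basis up to a common $\tau$, set $\bar\omega=\omega^{1}-\omega^{2}$, $\bar\theta=\theta^{1}-\theta^{2}$, $\bar u=u^{1}-u^{2}$ and apply It\^o's formula to $\|\bar\omega\|_{L^{2}}^{2}+\|\bar\theta\|_{H^{1}}^{2}$. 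Skew-adjointness kills the diagonal transport terms $\langle u^{1}\cdot\nabla\bar\omega,\bar\omega\rangle$ and $\langle\mathcal{L}_{\xi_{i}}\bar\omega,\bar\omega\rangle$; the noise quadratic variation cancels its It\^o correction up to a term bounded by $C_{\xi}\big(\|\bar\omega\|_{L^{2}}^{2}+\|\bar\theta\|_{H^{1}}^{2}\big)$; the coupling contributes only $\langle\partial_{x}\bar\theta,\bar\omega\rangle\le\tfrac12\big(\|\bar\theta\|_{H^{1}}^{2}+\|\bar\omega\|_{L^{2}}^{2}\big)$; and the off-diagonal terms $\bar u\cdot\nabla\omega^{2}$, $\bar u\cdot\nabla\theta^{2}$ are absorbed using $\|\bar u\|_{L^{4}}\lesssim\|\bar\omega\|_{L^{2}}$, the embeddings $H^{1}\hookrightarrow L^{4}$, $H^{2}\hookrightarrow L^{\infty}$, and $\theta^{2}\in H^{3}$ --- here the worst piece, $\langle\bar u\cdot\nabla D^{2}\theta^{2},\nabla\bar\theta\rangle$, needs precisely $D^{2}\theta^{2}\in L^{4}$, which $H^{2}$ would not deliver. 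A stochastic Gronwall inequality with the a.s.-finite coefficient $\int_{0}^{t}\big(\|\omega^{1}\|_{H^{2}}+\|\omega^{2}\|_{H^{2}}+\|\theta^{1}\|_{H^{3}}+\|\theta^{2}\|_{H^{3}}\big)\diff s$ forces $\bar\omega\equiv0$, $\bar\theta\equiv0$ a.s. Finally, weak existence together with pathwise uniqueness yields, by the Gy\"ongy--Krylov / Yamada--Watanabe characterization, a unique strong solution on the original filtered probability space, which proves the theorem.

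\emph{Main obstacle.} The delicate point is the $N$-uniform a priori estimate in the top norm $H^{2}\times H^{3}$: one must simultaneously extract the exact cancellation between $\sum_{i}\|\mathcal{L}_{\xi_{i}}v\|_{H^{s}}^{2}$ and $\sum_{i}\langle\mathcal{L}_{\xi_{i}}^{2}v,v\rangle_{H^{s}}$ while keeping the residual commutators summable in $i$, and keep track of the one-derivative loss from the buoyancy coupling $\partial_{x}\theta$ that dictates the mismatched functional setting. These are exactly the ``general derivative estimates'' advertised in the introduction; the regularization, the tightness and identification step, and the Gronwall arguments are, by contrast, routine.
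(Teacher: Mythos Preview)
Your proposal is correct and follows the same overall architecture as the paper --- It\^o reformulation, regularisation, uniform $H^{2}\times H^{3}$ estimates hinging on the noise cancellation, tightness via Prokhorov--Skorokhod, pathwise uniqueness, and the Gy\"ongy--Krylov upgrade to strong solutions --- but the implementation differs in two places worth noting. First, instead of Galerkin projections with stopping-time localisation, the paper introduces an explicit smooth cutoff $\eta_{r}(\|\nabla u\|_{L^{\infty}})$, $\eta_{r}(\|\nabla\theta\|_{L^{\infty}})$ in front of the transport nonlinearities (equations \eqref{eq:vor:tru}--\eqref{eq:theta:tru}), which makes them globally Lipschitz, and then adds artificial hyper-dissipation $\nu\Delta^{5}\omega$, $\nu\Delta^{7}\theta$ so that global solutions of the truncated system are obtained by a semigroup/mild-formulation fixed point (Proposition \ref{Global:sol:reg:truncated}); the limit $\nu\to 0$ is taken first, and only afterwards is the cutoff removed by restricting to the stopping time $\tau_{r}$ of Lemma \ref{globalimplieslocal}. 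Your stopping-time localisation achieves the same end more directly but forgoes the convenience of a globally defined approximating flow. Second, for pathwise uniqueness the paper works entirely in $L^{2}\times L^{2}$ (Proposition \ref{prop:uniqueness}) rather than your $L^{2}\times H^{1}$; your choice handles the buoyancy coupling $\langle\partial_{x}\bar\theta,\bar\omega\rangle$ more cleanly, at the cost of tracking one more commutator in the $\theta$-equation. Your identification of the ``main obstacle'' --- the exact top-order cancellation between $\sum_{i}\langle\mathcal{L}_{\xi_{i}}^{2}v,v\rangle_{H^{s}}$ and $\sum_{i}\|\mathcal{L}_{\xi_{i}}v\|_{H^{s}}^{2}$ with a residual bounded by $C\|v\|_{H^{s}}^{2}$ summable in $i$ --- is exactly the content of Proposition \ref{Liecancellations} and its generalisation in Appendix \ref{appendixA}.
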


\begin{theorem}\label{mainth2:intro} 
Let $(\omega_{0},\theta_{0})\in H^{2}(\mathbb{T}^{2},\mathbb{R})\times H^{3}(\mathbb{T}^{2},\mathbb{R})$. Then there exists a stopping time $\tau_{max}$ and processes $\omega:\mathbb{T}^{2}\times \Xi \times [0,\tau_{max})\to \mathbb{R}, \theta:\mathbb{T}^{2}\times \Xi\times [0,\tau_{max})\to \mathbb{R},$ with trajectories in $C([0,\tau_{max});H^{2}(\mathbb{T}^{2},\mathbb{R})\times H^{3}(\mathbb{T}^{2},\mathbb{R}))$. Moreover, if $\tau_{max}<\infty$, then
\[ \int_{0}^{\tau_{max}}  \left( \norm{\nabla u(t)}_{L^{\infty}}+\norm{\nabla \theta(t)}_{L^{\infty}} \right) \ \diff t = \infty. \]
In particular, $\displaystyle\lim\sup_{t\nearrow \tau_{max}} (\norm{\nabla u(t)}_{L^{\infty}}+\norm{\nabla \theta(t)}_{L^{\infty}}) = \infty$. $\tau_{max}$ is the largest time satisfying the aforementioned properties.
\end{theorem}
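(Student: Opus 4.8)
The plan is to prove the blow-up alternative (Theorem \ref{mainth2:intro}) as a consequence of the local well-posedness result in Theorem \ref{mainth:intro} together with the a priori energy estimates that will have been derived in the body of the paper. First I would fix $(\omega_0,\theta_0)\in H^2\times H^3$ and define $\tau_{max}$ as the supremum of all stopping times $\tau$ for which a solution exists on $[0,\tau]$ with trajectories in $C([0,\tau];H^2\times H^3)$; by uniqueness (Theorem \ref{mainth:intro}) these solutions are consistent on overlapping intervals, so they glue together to yield a single solution on the (relatively open, random) interval $[0,\tau_{max})$, and $\tau_{max}$ is by construction the maximal such time. The substance is then to show that if $\tau_{max}<\infty$ on a set of positive probability, then $\int_0^{\tau_{max}}(\norm{\nabla u(t)}_{L^\infty}+\norm{\nabla\theta(t)}_{L^\infty})\,\diff t=\infty$ almost surely on that set.

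The main step is the contrapositive: on the event
\[
A_M=\Big\{\tau_{max}<\infty,\ \int_0^{\tau_{max}}\big(\norm{\nabla u(t)}_{L^\infty}+\norm{\nabla\theta(t)}_{L^\infty}\big)\,\diff t\le M\Big\},
\]
I would show that $\limsup_{t\nearrow\tau_{max}}(\norm{\omega(t)}_{H^2}+\norm{\theta(t)}_{H^3})<\infty$ a.s., which contradicts maximality because then the solution could be restarted at a time close to $\tau_{max}$ (using the local existence theorem with the $H^2\times H^3$ data at that time) and extended beyond $\tau_{max}$. To get the bound, I would apply the Itô formula (in Stratonovich-to-Itô converted form) to $\norm{\omega}_{H^2}^2+\norm{\theta}_{H^3}^2$, use the commutator/transport-type estimates for $\mathcal L_u$ and $\mathcal L_{\xi_i}$ proved earlier, and the elliptic estimate $\norm{u}_{H^3}\lesssim\norm{\omega}_{H^2}$ (Biot–Savart on $\mathbb T^2$ with zero mean). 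The transport structure of the cylindrical noise means the top-order noise terms and their Itô corrections combine with the Stratonovich correction so that only lower-order (zeroth-order-in-derivative-count) terms survive in the drift of $d(\norm{\omega}_{H^2}^2+\norm{\theta}_{H^3}^2)$, giving a bound of the form
\[
\diff\big(\norm{\omega}_{H^2}^2+\norm{\theta}_{H^3}^2\big)\le C\big(1+\norm{\nabla u}_{L^\infty}+\norm{\nabla\theta}_{L^\infty}\big)\big(\norm{\omega}_{H^2}^2+\norm{\theta}_{H^3}^2\big)\,\diff t+\diff N_t,
\]
where $N_t$ is a local martingale. A stopping-time/localization argument plus Grönwall's inequality then controls the stopped process by $(\norm{\omega_0}_{H^2}^2+\norm{\theta_0}_{H^3}^2)\exp(CM)$ on $A_M$, uniformly up to $\tau_{max}$; handling the martingale term requires a localizing sequence and either a Burkholder–Davis–Gundy estimate or a direct supermartingale argument to pass to the limit.

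Having the uniform bound on $A_M$, I would conclude: on $A_M$ the $H^2\times H^3$ norm stays bounded as $t\nearrow\tau_{max}$, and in fact the solution extends continuously to $t=\tau_{max}$ with values in $H^2\times H^3$ (first weak-* convergence along a subsequence, then a standard argument upgrading to strong continuity using the equation); applying Theorem \ref{mainth:intro} with initial data $(\omega(\tau_{max}),\theta(\tau_{max}))$ produces a solution on $[\tau_{max},\tau_{max}+\delta]$ for some $\delta>0$, contradicting the definition of $\tau_{max}$. Hence $P(A_M)=0$ for every $M$, and letting $M\to\infty$ gives that $\int_0^{\tau_{max}}(\norm{\nabla u}_{L^\infty}+\norm{\nabla\theta}_{L^\infty})\,\diff t=\infty$ a.s. on $\{\tau_{max}<\infty\}$; the $\limsup$ statement follows immediately since a finite $\limsup$ would make the time integral over a finite interval finite. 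The main obstacle I anticipate is the careful bookkeeping in the Itô formula for the high-order Sobolev norms: one must verify that the genuinely top-order contributions of the infinitely many noise terms $\sum_i\mathcal L_{\xi_i}\omega\circ\diff B^i_t$ cancel against the Stratonovich-to-Itô correction (so that the noise is effectively of transport type and does not destroy the energy balance), and that the infinite sums converge under the summability assumptions placed on the $\xi_i$; this is exactly where the general derivative estimates advertised in the introduction do the heavy lifting.
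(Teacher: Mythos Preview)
Your overall strategy --- derive the differential inequality
\[
\diff\big(\|\omega\|_{H^2}^2+\|\theta\|_{H^3}^2\big)\le C\big(1+\|\nabla u\|_{L^\infty}+\|\nabla\theta\|_{L^\infty}\big)\big(\|\omega\|_{H^2}^2+\|\theta\|_{H^3}^2\big)\,\diff t+\diff N_t
\]
using the Lie-derivative cancellations, then argue by contradiction --- is correct in spirit and matches the paper at the level of the energy estimate. The paper, however, organises the argument differently and handles the martingale by a device you do not mention. Rather than restarting the solution past $\tau_{max}$, the paper introduces two stopping times, $\tau^2$ (blow-up of $\|\omega\|_{H^2}+\|\theta\|_{H^3}$) and $\tau^\infty$ (blow-up of $\int_0^t(\|\nabla u\|_{L^\infty}+\|\nabla\theta\|_{L^\infty})\,\diff s$), and proves $\tau^2=\tau^\infty$ a.s. For the nontrivial direction it applies It\^o's formula to $\log(\|\omega\|_{H^2}^2+\|\theta\|_{H^3}^2)$: after the logarithm the local martingale has quadratic variation bounded by a constant times $t$, \emph{independently of the solution}, so BDG gives $\mathbb E[\sup_s|M_s|]\lesssim\sqrt t$ and one concludes that $\mathbb E\big[\sup_{[0,\tau_n^\infty\wedge m]}\log(\|\omega\|_{H^2}^2+\|\theta\|_{H^3}^2)\big]<\infty$ directly.

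Your ``BDG or supermartingale'' alternative is where you should be careful. The direct BDG route does not close as you state it: the quadratic variation of $N_t$ is of order $(\|\omega\|_{H^2}^2+\|\theta\|_{H^3}^2)^2$, so BDG feeds back the very quantity you are trying to control, and you will not obtain the deterministic bound $X_0\exp(CM)$ you claim on $A_M$. The supermartingale option \emph{does} work (set $Y_t=\exp\big(-C\int_0^t(1+\|\nabla u\|_{L^\infty}+\|\nabla\theta\|_{L^\infty})\,\diff s\big)X_t$; this is a nonnegative local supermartingale, hence $\sup_t Y_t<\infty$ a.s.\ by Doob), but it only gives a.s.\ finiteness of $X_t$ on $A_M$, with a random bound, not the uniform constant you wrote. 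That suffices for the contradiction, but you then still owe the strong (not merely weak-$*$) continuity at $\tau_{max}$ and the restart of the local existence theorem at the random time $\tau_{max}$ with $\mathcal F_{\tau_{max}}$-measurable data; both are standard but require care in the stochastic setting. The paper's logarithm trick combined with the stopping-time formulation bypasses all of these issues at once.
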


Finally, we show the following theorem, which will be paramount when deriving some of the Sobolev estimates we need throughout this paper.
\begin{theorem}\label{generalcancellations}
Let $\mathcal{Q}$ be a linear operator of first order with smooth bounded coefficients. Then for $f\in H^{2}(\mathbb{T}^{2},\mathbb{R})$ we have 
\begin{equation}\label{eq:cancellation1:thm}
\langle \mathcal{Q}^2 f, f \rangle_{L^2} +  \langle \mathcal{Q} f, \mathcal{Q} f \rangle_{L^2} \lesssim ||f||_{L^2}^2.
\end{equation}
Moreover, if $f\in H^{2+k}(\mathbb{T}^{2},\mathbb{R})$, and $\mathcal{P}$ is a pseudodifferential operator of order $k,$
\begin{equation}\label{eq:cancellation2:thm}
\langle \mathcal{P} \mathcal{Q}^2 f, \mathcal{P} f \rangle_{L^2} +  \langle \mathcal{P} \mathcal{Q} f, \mathcal{P} \mathcal{Q} f \rangle_{L^2} \lesssim ||f||_{H^k}^2, 
\end{equation}
for every $k\in[1, \infty)$.
\end{theorem}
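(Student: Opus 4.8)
The plan is to establish the base estimate \eqref{eq:cancellation1:thm} by two integrations by parts, and then to bootstrap it to \eqref{eq:cancellation2:thm} by commuting the pseudodifferential operator $\mathcal{P}$ past $\mathcal{Q}$. For \eqref{eq:cancellation1:thm}: since $\mathcal{Q}$ is first order with smooth coefficients, write $\mathcal{Q} = a\cdot\nabla + a_0$ (with $a, a_0$ bounded together with all their derivatives on $\mathbb{T}^2$); its formal $L^2$-adjoint is $\mathcal{Q}^{*} = -\mathcal{Q} + c$, where $c := 2a_0 - \nabla\cdot a$ is a bounded multiplier — this is precisely the ``cancellation'' the name of the theorem refers to. Hence, for $f\in H^2$, a first integration by parts gives $\langle \mathcal{Q}^2 f, f\rangle = \langle \mathcal{Q} f,\mathcal{Q}^{*}f\rangle = -\langle \mathcal{Q} f,\mathcal{Q} f\rangle + \langle \mathcal{Q} f, cf\rangle$, and a second one, using $\mathcal{Q}^{*}(cf) = -c\,\mathcal{Q} f + (c^2 - a\cdot\nabla c)f$, yields
\[ \langle \mathcal{Q}^2 f, f\rangle + \langle \mathcal{Q} f,\mathcal{Q} f\rangle = \langle \mathcal{Q} f, cf\rangle = \tfrac{1}{2}\langle f, (c^2 - a\cdot\nabla c)f\rangle \lesssim \|f\|_{L^2}^2 , \]
which is \eqref{eq:cancellation1:thm}.

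For \eqref{eq:cancellation2:thm}, set $R := [\mathcal{P},\mathcal{Q}]$. Since $\mathcal{P}$ has order $k$ and $\mathcal{Q}$ order $1$, both $R$ and $[\mathcal{Q},R]$ are pseudodifferential operators of order $k$, so $\|Rg\|_{L^2} + \|[\mathcal{Q},R]g\|_{L^2}\lesssim \|g\|_{H^k}$. From $\mathcal{P}\mathcal{Q} = \mathcal{Q}\mathcal{P} + R$ I would write $\mathcal{P}\mathcal{Q} f = \mathcal{Q}\mathcal{P} f + Rf$ and $\mathcal{P}\mathcal{Q}^2 f = \mathcal{Q}^2\mathcal{P} f + \mathcal{Q} Rf + R\mathcal{Q} f$, substitute these into the left-hand side of \eqref{eq:cancellation2:thm}, and expand the two inner products. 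Abbreviating $g := \mathcal{P} f$ (which lies in $H^2$ because $f\in H^{2+k}$), the left-hand side of \eqref{eq:cancellation2:thm} becomes
\[ \big(\langle \mathcal{Q}^2 g, g\rangle + \langle \mathcal{Q} g,\mathcal{Q} g\rangle\big) + \|Rf\|_{L^2}^2 + \langle \mathcal{Q} Rf,\mathcal{P} f\rangle + \langle R\mathcal{Q} f,\mathcal{P} f\rangle + 2\langle \mathcal{Q}\mathcal{P} f, Rf\rangle . \]
By \eqref{eq:cancellation1:thm} applied to $g$, the first bracket is $\lesssim \|g\|_{L^2}^2 = \|\mathcal{P} f\|_{L^2}^2 \lesssim \|f\|_{H^k}^2$, and $\|Rf\|_{L^2}^2 \lesssim \|f\|_{H^k}^2$ since $R$ has order $k$.

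The remaining three cross terms are the heart of the matter: each of $\langle \mathcal{Q} Rf,\mathcal{P} f\rangle$, $\langle R\mathcal{Q} f,\mathcal{P} f\rangle$ and $2\langle \mathcal{Q}\mathcal{P} f, Rf\rangle$ is a priori bounded only by a constant times $\|f\|_{H^{k+1}}\|f\|_{H^k}$ — one derivative more than we can afford — and the point is that their leading parts cancel. I would extract this cancellation by integrating by parts in $\mathcal{Q}$ (via $\mathcal{Q}^{*} = -\mathcal{Q} + c$, with $c$ bounded) and by commuting $\mathcal{Q}$ past $R$ (via $\mathcal{Q} R = R\mathcal{Q} + [\mathcal{Q},R]$, with $[\mathcal{Q},R]$ of order $k$): this produces the identities $\langle \mathcal{Q} Rf,\mathcal{P} f\rangle = -\langle \mathcal{Q}\mathcal{P} f, Rf\rangle + O(\|f\|_{H^k}^2)$ and $\langle \mathcal{Q}\mathcal{P} f, Rf\rangle = -\langle R\mathcal{Q} f,\mathcal{P} f\rangle + O(\|f\|_{H^k}^2)$, with the error terms handled by Cauchy--Schwarz together with the order-$k$ bounds on $R$, $[\mathcal{Q},R]$ and $\mathcal{P}$. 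Adding the three cross terms, everything telescopes to $O(\|f\|_{H^k}^2)$, which proves \eqref{eq:cancellation2:thm}. The only genuine obstacle is this last step — spotting and exploiting the top-order cancellation among the cross terms; everything else is integration by parts and standard symbol calculus.
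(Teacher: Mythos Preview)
Your proof is correct and follows essentially the same approach as the paper: the adjoint identity $\mathcal{Q}^* = -\mathcal{Q} + c$ (the paper's $E$) for the base case, and commutator calculus with $R = [\mathcal{P},\mathcal{Q}]$ and $[\mathcal{Q},R]$ (the paper's $T_1,T_2$) of order $k$ for the second estimate. The only organisational difference is that you first isolate the block $\langle \mathcal{Q}^2 g,g\rangle + \langle \mathcal{Q} g,\mathcal{Q} g\rangle$ with $g=\mathcal{P} f$ and invoke \eqref{eq:cancellation1:thm} directly, then telescope the three cross terms, whereas the paper unfolds the whole computation linearly without explicitly calling the first estimate; the ingredients and cancellations are identical.
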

\begin{remark}
Inequalities in Theorem \ref{generalcancellations} turn out to be fundamental for closing energy estimates in a very general type of  stochastic fluid problems. We discuss this in the appendix.
\end{remark}

\subsection*{Plan of the paper.} The paper is organised along the following lines: \\
\begin{itemize}[leftmargin=*]
\item
In Section \ref{derivation:sto:bouss} we show how to derive the 2D deterministic Boussinesq equations from a Clebsch-type variational principle and use this approach to construct the stochastic version we will focus our study on.  \\
\item
In Section \ref{prel:not:main} we review some basic mathematical background, both deterministic and stochastic, establish key notation and introduce our main assumptions. We also present the main mathematical results of this article. \\ 
\item
In Section \ref{proofs:main:results} we provide the proof of the first key result of this paper. We start by showing local uniqueness of solutions defined up to a certain stopping time. Then we introduce a truncated version of the stochastic Boussinesq equations and show uniqueness of maximal solutions. The next subsection deals with the global existence of mild solutions of the hyper-regularised truncated Boussinesq equations. \\
\item
In Section \ref{comp:lim:proc} we discuss the required compactness methods and limiting procedure. To that purpose, one has to overcome some technical difficulties, which we treat in great detail. \\
\item
In Section \ref{blowup:crit} we prove the second fundamental result of this paper, namely the blow-up criterion. We also discuss the main obstacles regarding the derivation of sharper versions of this blow-up criterion, which are actually available for the deterministic model but seemingly not for its stochastic counterpart.\\

\item In Section \ref{conclusions} we propose some possible future research lines and comment on several open problems which are left to study. \\

\item Appendix \ref{appendixA} gathers the proof of our third main result and probably the most important one, namely, our general derivative estimates. These are needed in a simpler Lie-derivative form throughout the body of this paper. This simplified version of Theorem \ref{generalcancellations} is presented in Section \ref{prel:not:main}. 
\end{itemize}

\newpage

\section{The stochastic Boussinesq equations}\label{derivation:sto:bouss}
\subsection{Clebsch approach derivation} \label{5}
The Boussinesq equations for inviscid, incompressible, 2D fluid flow in a smooth domain $\Omega \subset \mathbb{R}^2$, first derived in \cite{Bouss} are given by
\begin{align}
&\partial_t u + u \cdot \nabla u = - \nabla p + \theta \hat{e}_2, \label{Bous1} \\
&\partial_t \theta  + u \cdot \nabla \theta = 0, \label{Bous2} \\
&\nabla \cdot u = 0, \label{Bous3}
\end{align}
where $u=u(x,y)$ represents the fluid velocity, $\theta=\theta(x,y)$ is the potential temperature, and $p=p(x,y)$ denotes the pressure of the fluid. Also, we have used the notation $\hat{e}_2 = \nabla y = (0,1).$ Due to their variational character, the Boussinesq equations enjoy several important conservation laws.
\begin{theorem}
The Boussinesq system \eqref{Bous1}-\eqref{Bous3} conserves energy and generalised enstrophy.
\begin{align}
h=\int_\Omega \left \{  \frac12 |u|^2  + \theta y \right \} \diff V
\quad\hbox{(energy),} \quad
\,\label{EPEady-erg1}
\end{align}
\begin{align}
C_\Phi=\int_\Omega \Phi(\theta) \diff V
\quad\hbox{(generalised enstrophy),}\quad
\label{EPEady-erg2}
\end{align}
for any differentiable function $\Phi$ of the potential temperature.
\end{theorem}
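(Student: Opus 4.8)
The plan is to establish both conservation laws by the direct energy method: differentiate each functional in time, substitute the evolution equations \eqref{Bous1}--\eqref{Bous3}, and integrate by parts, exploiting the incompressibility constraint $\nabla\cdot u = 0$ together with the boundary conditions (periodicity on $\mathbb{T}^2$, or the no-flux condition $u\cdot n = 0$ on $\partial\Omega$ for a general smooth domain) to discard all boundary contributions. No appeal to the full variational structure is required, although both quantities can alternatively be read off from it: $h$ is the Hamiltonian, conserved by time-translation invariance, whereas $C_\Phi$ is a Casimir of the underlying Lie--Poisson bracket and thus Poisson-commutes with any Hamiltonian.

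For the energy I would split $h$ into the kinetic part $\tfrac12\int_\Omega |u|^2\,\diff V$ and the potential part $\int_\Omega \theta y\,\diff V$. Differentiating the kinetic part and inserting \eqref{Bous1} produces three terms. The self-advection term $\int_\Omega u\cdot(u\cdot\nabla u)\,\diff V$ equals $\tfrac12\int_\Omega u\cdot\nabla|u|^2\,\diff V$, which vanishes after integration by parts since $\nabla\cdot u=0$; the pressure term $\int_\Omega u\cdot\nabla p\,\diff V$ vanishes for the same reason. This leaves only the buoyancy work term $\int_\Omega \theta\,(u\cdot\hat{e}_2)\,\diff V$. Differentiating the potential part, inserting the transport equation \eqref{Bous2} for $\theta$, integrating by parts and again using incompressibility, yields a cross term of the same form. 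The crux of the argument is that these two buoyancy coupling terms combine to zero, so that $\tfrac{d}{dt}h=0$.

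For the generalised enstrophy the argument is shorter and uses only \eqref{Bous2} and \eqref{Bous3}. Differentiating $C_\Phi$ and applying the chain rule gives $\tfrac{d}{dt}C_\Phi=\int_\Omega \Phi'(\theta)\,\partial_t\theta\,\diff V=-\int_\Omega \Phi'(\theta)\,(u\cdot\nabla\theta)\,\diff V=-\int_\Omega u\cdot\nabla\Phi(\theta)\,\diff V$. Since $u$ is divergence-free, $u\cdot\nabla\Phi(\theta)=\nabla\cdot\bigl(u\,\Phi(\theta)\bigr)$, so the integral reduces to the flux of $u\,\Phi(\theta)$ through $\partial\Omega$, which vanishes by the divergence theorem under the stated boundary conditions. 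Hence $C_\Phi$ is conserved for every differentiable $\Phi$; this is simply the fact that a scalar transported by a volume-preserving flow conserves the integral of any pointwise function of itself.

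I do not anticipate a genuine obstacle: both computations are standard energy balances, and the only points demanding care are the vanishing of the boundary integrals and sufficient regularity of $(u,\theta,p)$ to justify differentiation under the integral sign and the integrations by parts. The mildly delicate step is the sign bookkeeping in the energy balance, ensuring that the buoyancy work term and the rate of change of the potential energy $\int_\Omega\theta y\,\diff V$ cancel; this is precisely where the convention relating the buoyancy force $\theta\hat{e}_2$ in \eqref{Bous1} to the potential-energy density must be pinned down and verified explicitly.
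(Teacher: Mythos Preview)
The paper states this theorem without proof, so there is nothing to compare against directly. Your approach---direct time differentiation, substitution of \eqref{Bous1}--\eqref{Bous3}, and integration by parts using incompressibility---is the standard one and is correct for the generalised enstrophy $C_\Phi$; that part of your argument is complete.

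For the energy, your instinct to flag the sign bookkeeping as the delicate point is exactly right, and in fact when you carry it out you will find that the paper contains a sign inconsistency. With \eqref{Bous1} as written (buoyancy force $+\theta\hat e_2$), one computes
\[
\frac{d}{dt}\,\frac{1}{2}\int_\Omega |u|^2\,\diff V = \int_\Omega \theta\,u_2\,\diff V,
\qquad
\frac{d}{dt}\int_\Omega \theta y\,\diff V = -\int_\Omega y\,(u\cdot\nabla\theta)\,\diff V = +\int_\Omega \theta\,u_2\,\diff V,
\]
so $\tfrac{d}{dt}h = 2\int_\Omega \theta u_2\,\diff V$, which does not vanish. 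The quantity actually conserved by \eqref{Bous1}--\eqref{Bous3} is $\int_\Omega\bigl(\tfrac12|u|^2 - \theta y\bigr)\,\diff V$; equivalently, the stated $h$ in \eqref{EPEady-erg1} is conserved if the buoyancy term in \eqref{Bous1} carries a minus sign. (This is consistent with the Lagrangian \eqref{BouLag}, which contains $-D\theta y$, and with the derivation in the proof of Theorem~\ref{Theorem:clebschtype}, where a sign slip occurs in passing from the one-form equation to the vector equation.) Your plan is therefore sound; just be aware that the statement you are proving needs one sign corrected.
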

The following theorem shows that the Boussinesq equations can be derived from a Clebsch-type approach (this kind of approach is presented in \cite{Kuper}). This is the key tool for introducing stochasticity as explained in \cite{Principal}.
\begin{theorem}\label{Theorem:clebschtype}
Let $\Omega$ be a smooth domain. Consider a Lagrangian function $l[u, \theta, D]$ and construct the following action functional
\[ S\left[u,\theta, D, \phi, \beta\right] = \int_a^b  l[u, \theta, D] \diff t 
+ \int_a^b \int_\Omega \left \{ \phi (D_t + \text{div} (D u)) + \beta (\partial_t \theta + u \cdot \nabla \theta ) \right \}\diff V \diff t. \]
Here, $D$ represents a density, and $\beta,$ $\phi$ are multipliers with respect to which we will also take variations. $\diff V$ denotes integration in the domain $\Omega.$ If we apply Hamilton's principle for this action functional, we obtain Boussinesq equations  \eqref{Bous1}-\eqref{Bous3}. 
\end{theorem}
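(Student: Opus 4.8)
The plan is to implement Hamilton's principle $\delta S=0$, taking \emph{free} and independent variations of all the fields $u$, $\theta$, $D$, $\phi$, $\beta$, with each variation vanishing at the temporal endpoints $t=a,b$ and respecting the boundary condition $u\cdot n=0$ on $\partial\Omega$. The concrete Lagrangian is the kinetic minus the gravitational potential energy,
\[
 l[u,\theta,D]=\int_\Omega\left\{\tfrac{D}{2}|u|^2 - D\,\theta y\right\}\diff V,
\]
the sign of the potential term being fixed so as to reproduce the buoyancy force $\theta\hat{e}_2$ of \eqref{Bous1}. To single out the \emph{incompressible} system one augments the action by a constraint term $-\int_\Omega p\,(D-1)\diff V$, where the multiplier $p$ will be identified with the fluid pressure. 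With this setup I would read off the Euler--Lagrange equations one variation at a time.

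The multiplier variations are immediate: $\delta\phi$ returns the continuity equation $D_t+\nabla\cdot(Du)=0$, $\delta\beta$ returns $\partial_t\theta+u\cdot\nabla\theta=0$, which is exactly \eqref{Bous2}, and $\delta p$ gives the constraint $D\equiv1$, whereupon continuity collapses to $\nabla\cdot u=0$, i.e.\ \eqref{Bous3}. Integrating the multiplier terms by parts in space, the variation $\delta u$ yields the Clebsch (Weber) representation of the momentum, $\dede{l}{u}=D\nabla\phi-\beta\nabla\theta$, which with the chosen $l$ reads $u=\nabla\phi-\beta\nabla\theta$. Finally, integrating by parts in both time and space, the variations $\delta D$ and $\delta\theta$ produce the two Bernoulli-type relations $\dede{l}{D}=\partial_t\phi+u\cdot\nabla\phi$ and $\dede{l}{\theta}=\partial_t\beta+\nabla\cdot(\beta u)$, i.e.\ material-derivative equations for the potentials $\phi$ and $\beta$ (in particular $\dede{l}{D}=\tfrac12|u|^2-\theta y-p$ encodes where the pressure enters).

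The substantive step is to assemble these relations into the momentum equation \eqref{Bous1}, most cleanly in the language of differential forms. Writing $u^\flat=d\phi-\beta\,d\theta$, one has $du^\flat=-d\beta\wedge d\theta$ and, by Cartan's formula, $\partial_t u^\flat+\mathcal{L}_u u^\flat=d\big(\partial_t\phi+|u|^2\big)-\partial_t\beta\,d\theta-\beta\,d(\partial_t\theta)+\iota_u(du^\flat)$, with $\iota_u(du^\flat)=-(u\cdot\nabla\beta)\,d\theta+(u\cdot\nabla\theta)\,d\beta$. I would then substitute the advection equation $\partial_t\theta=-u\cdot\nabla\theta$, the Bernoulli relation for $\beta$, and the pointwise identity $u\cdot\nabla\phi=|u|^2+\beta\,u\cdot\nabla\theta$ (an immediate consequence of $\nabla\phi=u+\beta\nabla\theta$). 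This collapses every $\phi$- and $\beta$-dependent contribution into an exact differential, the only surviving non-gradient piece being the buoyancy term. Converting back to the convective derivative via $(\mathcal{L}_u u^\flat)_j=u_k\partial_k u_j+\partial_j\tfrac{|u|^2}{2}$, and using the Bernoulli relation for $\phi$ to identify the remaining scalar with $-p$, leaves $\partial_t u+u\cdot\nabla u=-\nabla p+\theta\hat{e}_2$, which is \eqref{Bous1}.

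I expect the bookkeeping in this last stage to be the only real obstacle: the pressure is not a primitive field but emerges as the Bernoulli function $\tfrac12|u|^2-(\partial_t\phi+u\cdot\nabla\phi)$ (up to the potential), so one must verify that \emph{all} the potential-dependent terms reorganise into a single gradient, leaving precisely $\theta\hat{e}_2$, and check that the computation is insensitive to the gauge freedom in the Clebsch variables $(\phi,\beta)$. A fully equivalent route, which hides the same cancellations inside the geometric machinery, is to package the $\delta u$, $\delta D$, $\delta\theta$ relations into the Euler--Poincar\'e equation with advected quantities, $(\partial_t+\mathcal{L}_u)\dede{l}{u}=\dede{l}{D}\diamond D+\dede{l}{\theta}\diamond\theta$, and to evaluate the diamond terms; either way the crux is the same algebraic collapse to a pressure gradient plus buoyancy.
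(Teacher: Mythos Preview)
Your proposal is correct and follows essentially the same route as the paper: derive the Clebsch relations $\dede{l}{u}=D\nabla\phi-\beta\nabla\theta$, $\dede{l}{D}=\phi_t+u\cdot\nabla\phi$, $\dede{l}{\theta}=\beta_t+\nabla\cdot(\beta u)$ from the variations, then compute the material derivative of the momentum one-form and substitute the specific Lagrangian. The paper organizes the ``substantive step'' a little more compactly by keeping $D$ general and using that $d$ commutes with $\partial_t+\mathcal{L}_u$ to obtain the abstract identity $(\partial_t+\mathcal{L}_u)\big(\tfrac{1}{D}\dede{l}{u}\cdot dx\big)=d\,\dede{l}{D}-\tfrac{1}{D}\dede{l}{\theta}\,d\theta$ before specializing, which hides exactly the cancellations you anticipate having to track by hand via Cartan's formula.
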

\begin{remark}
The quantities which are paired with $\phi$ and $\beta$ in the action functional have a geometric meaning. Indeed, if $D$ is considered as a two-form density, and $\theta$ as a scalar, their Lie derivatives with respect to a velocity vector field $u$ become
\begin{eqnarray*}
\mathcal{L}_u D &=& \text{div}  (Du), \\
\mathcal{L}_u \theta &=& u \cdot \nabla \theta.
\end{eqnarray*}
Hence, the action functional above could be rewritten as
\[ S[u,\theta, D, \phi, \beta] = \int_a^b  l[u, \theta, D] \diff t 
+ \int_a^b \int_\Omega \{ \phi (D_t + \mathcal{L}_u D) + \beta (\partial_t \theta + \mathcal{L}_u \theta ) \}\diff V \diff t. \]
\end{remark}
\begin{proof}[Proof of Theorem \ref{Theorem:clebschtype}]
First note that the form of the action functional is a way of imposing the constraints coming from the continuity equation and the tracer equation for $\theta.$ Let us take variations on the action functional S:
\begin{eqnarray*}
0 &=&  \delta S\left[u, \theta, D\right]  \\ &=& \delta \int_a^b  l[u, \theta, D] \diff t 
+ \delta \int_a^b  \int_\Omega \{\phi (D_t + \text{div} (D u)) + \beta (\partial_t \theta + u \cdot \nabla \theta) \} \diff V \diff t \\
&=& \int_a^b \left\langle \frac{\delta l}{\delta u} - D \nabla \phi  + \beta \nabla \theta , \delta u \right\rangle_{L^2}  \diff t 
+ \int_a^b \left\langle \frac{\delta l}{\delta D}  - \phi_t - \nabla \phi \cdot u , \delta D \right\rangle_{L^2}  \diff t \\
&+& \int_a^b  \left\langle \frac{\delta l}{\delta \theta}  - \beta_t - \text{div} (\beta u) , \delta \theta \right \rangle_{L^2} \diff t  
+ \int_a^b  \langle D_t + \text{div} (D u),  \delta \phi \rangle_{L^2} \diff t +  \int_a^b    \langle \partial_t \theta + u \cdot \nabla \theta, \delta \beta \rangle_{L^2} \diff t.
\end{eqnarray*}
Here, we have used the notation $\langle \cdot, \cdot \rangle_{L^2}$ to denote $L^2$ inner product. We obtain the following set of  equations
\begin{eqnarray*}
&&\frac{\delta l}{\delta u} = D \nabla \phi  - \beta \nabla \theta,  \quad \frac{\delta l}{\delta D}  = \phi_t + \nabla \phi \cdot u,  \quad
\frac{\delta l}{\delta \theta} = \beta_t + \text{div} (\beta u), \\ \\
  && D_t + \text{div} (D u) = 0, \quad \partial_t \theta + u \cdot \nabla \theta  =0.
\end{eqnarray*}
Now, use this last set of equations to compute 
\begin{eqnarray*}
\left( \frac{\partial}{\partial t} + \mathcal{L}_{u} \right) \left( \frac{1}{D} \frac{\delta l}{\delta u} \cdot \diff x \right) &=&  \left( \frac{\partial}{\partial t} + \mathcal{L}_{u} \right) \left( \left( \nabla \phi  - (\beta/ D) \nabla \theta \right) \cdot \diff x  \right)
= \left( \frac{\partial}{\partial t} + \mathcal{L}_{u} \right) \left( \diff \phi  - (\beta/ D) \diff \theta \right) \\
 &=& \diff \left( \frac{\partial}{\partial t} + \mathcal{L}_{u} \right) \phi  -\left( \frac{\partial}{\partial t} + \mathcal{L}_{u} \right)(\beta / D) \diff \theta - (\beta / D) \diff \left( \frac{\partial}{\partial t} + \mathcal{L}_{u} \right) \theta \\
&=& \diff \frac{\delta l}{\delta D} - \frac{1}{D} \frac{\delta l}{\delta \theta} \diff \theta.
\end{eqnarray*}
Note that if we substitute the Lagrangian function
\begin{align}
l=\int_\Omega \left \{  \frac12 D |u|^2 - D \theta y + p (1-D)\right \} \diff V,
\label{BouLag}
\end{align}
the variational derivatives become
\begin{eqnarray*}
\frac{1}{D}\frac{\delta l}{\delta u} &=& u, \\
\frac{\delta l}{\delta D} &=& \frac{1}{2} |u|^2 - \theta y - p,  \\
\frac{1}{D} \frac{\delta l}{\delta \theta} &=& -y.
\end{eqnarray*}\vspace{0.5cm}
Note that the multiplier $p$ enforces $D=1.$ Hence we obtain the equations
\[ u_t \cdot \diff x + \mathcal{L}_u (u \cdot \diff x) = \nabla (|u|^2/2 - \theta y-p)\cdot \diff x + y \nabla \theta \cdot \diff x ,\]
which can be rewritten as
$$u_t + u \cdot \nabla u = \theta \nabla y - \nabla p,$$
together with the tracer equation
$$\theta_t + u \cdot \nabla \theta = 0,$$
to close the system. Therefore, we have obtained the Boussinesq equations \eqref{Bous1}-\eqref{Bous3} by using a Clebsch-type approach with constraints to ensure conservation of mass and potential temperature. 
\end{proof}

\subsection{Stochastic equations for a general Lagrangian}   \label{sto}
In order to add stochasticity to the Boussinesq equations in a way that their geometric properties are preserved, we imitate the ideas in \cite{Principal}. In our case, there is more than one constraint of the type $q_t + \mathcal{L}_{u} q = 0,$ so we include all of them in our stochastic variational principle. \\\\ 
The new stochastic action functional will be
\begin{eqnarray*}
S[u, \theta, D] &=& \int_a^b l[u, \theta, D] \diff t 
+ \int_a^b  \left( \left\langle \phi , \frac{\diff D}{\diff t} + \mathcal{L}_{u} D \right\rangle_{L^2} + \left\langle \beta , \frac{\diff \theta }{\diff t} + \mathcal{L}_{u} \theta  \right\rangle_{L^2} \right) \diff t \\
&+& \int_a^b  \sum_{i=1}^\infty \langle \phi \diamond D, \xi_i(x,y) \rangle_{L^2} \circ \diff B_i(t) + \int_a^b  \sum_{i=1}^\infty \langle \beta \diamond \theta, \xi_i(x,y) \rangle_{L^2} \circ \diff B_i(t). 
\end{eqnarray*}
Here, $B_i,$ $i \in \mathbb{N},$ represents a countable family of independent Brownian motions, and $\xi_{i},$ $i \in \mathbb{N},$ are prescribed divergence-free vector fields. $``\circ"$ denotes Stratonovich integration. Note that we have also required the diamond operation, which is defined by
$$\langle p \diamond q , \xi \rangle_{L^2} = -\langle p  , \mathcal{L}_{\xi} q \rangle_{L^2}.$$ 
By taking variations, one obtains
\begin{eqnarray*}
\delta S[u, \theta, D] &=& 
\int_a^b \left \langle \diff D  + \mathcal{L}_{\diff X_t} D, \delta \phi \right \rangle_{L^2}  
+\int_a^b \left \langle \diff \theta  + \mathcal{L}_{\diff X_t} \theta, \delta \beta \right \rangle_{L^2} 
+ \int_a^b \left \langle  \frac{\delta l}{\delta D} \diff t - \diff \phi + \mathcal{L}^T_{\diff X_t} \phi, \delta D \right \rangle_{L^2} \\
&+& \int_a^b \left \langle  \frac{\delta l}{\delta \theta} \diff t - \diff \beta + \mathcal{L}^T_{\diff X_t} \beta , \delta \theta \right \rangle_{L^2}
+ \int_a^b \left \langle \frac{\delta l}{\delta u} - D \nabla \phi + \beta \nabla \theta, \delta u \right \rangle_{L^2} \diff t,
\end{eqnarray*}
where $X_t$ is defined by 
\[ \diff X_t = u(x,y,t) \diff t + \sum_{i=1}^\infty \xi_i(x,y) \circ \diff B_i(t),\]
which is a stochastic differential equation in Stratonovich form. Therefore, the stochastic equations of motion for a general Lagrangian depending on $u, \theta, D,$ become 
\begin{eqnarray}  \label{general1}
\left( \diff + \mathcal{L}_{\diff X_t} \right) \left( \frac{1}{D} \frac{\delta l}{\delta u} \cdot \diff x \right) = \diff \frac{\delta l}{\delta D} \diff t - \frac{1}{D} \frac{\delta l}{\delta \theta} \diff \theta \diff t,
\end{eqnarray}
plus the two imposed stochastic transport equations
$$ \diff \theta  + \mathcal{L}_{\diff X_t} \theta = 0,$$
and
$$ \diff D  + \mathcal{L}_{\diff X_t} D = 0.$$
\begin{remark}
The notation $\circ \diff B_i(t)$ represents Stratonovich integration with respect to Brownian motion. Hence, in order to rewrite these equations in It\^o form one has to use the It\^o correction. This will be done later. 
\end{remark}

\subsection{Stochastic incompressible Boussinesq model}
In Subsection \ref{sto}, we derived the stochastic Boussinesq equations for a general Lagrangian function by using the Clebsch approach. Let us now derive the stochastic incompressible Boussinesq equations, which as we have explained, have Lagrangian function (\ref{BouLag}). Note that in the deterministic case, the multiplier $p$ enforces that the velocity $u$ must be divergence-free (since it makes $D=1$). When one substitutes the Lagrangian (\ref{BouLag}) into the general stochastic equations (\ref{general1}), one realises quickly that in order for the computations to work properly (so that the geometric properties of the Boussinesq equations are not lost), one also needs to assume that the stochastic part of $u$ is divergence-free, this is
\[ \nabla \cdot \xi_i(x,y) = 0, \quad i \in \mathbb{N}.\]
We will denote by $\bar{u}$ (instead of $\diff X_t$) the stochastic velocity with noise. With this new notation, the stochastic Boussinesq equations become
\begin{eqnarray*}
\diff u + \bar{u} \cdot \nabla u + u_j \nabla \bar{u}_j &=& \nabla (|u|^2/2) \diff t - \nabla p \diff t +  \theta \hat{e}_2 \diff t, \\
\diff \theta + \bar{u} \cdot \nabla \theta   &=&0, \\
 \nabla \cdot \bar{u} &=& \nabla \cdot u \diff t + \nabla \cdot \xi_i \circ \diff B_i(t) = 0.
 \end{eqnarray*}
Here we employ the Einstein summation convention of summing over repeated indices for the term $u_j \nabla \bar{u}_j.$ Also, $D$ is moved along with the stochastic flow, namely, 
\[ \diff D + \bar{u} \cdot \nabla D = 0.\]

\subsection{Stratonovich to It\^o}\label{StatoIto}
We have obtained stochastic Boussinesq equations in Stratonovich form. This was convenient for us, since the equations in this form preserve important geometric properties we are interested in retaining, such as conservation laws. That is because the Stratonovich integral preserves the standard rules of integral calculus. The stochastic equations for a general Lagrangian in Stratonovich form are
\makeatletter
 \def\@eqnnum{{\normalsize \normalcolor (\theequation)}}
  \makeatother
 { \small	
\begin{eqnarray*}
 \diff  \left( \frac{1}{D} \frac{\delta l}{\delta u} \cdot \diff x \right) + \mathcal{L}_{u} \left( \frac{1}{D} \frac{\delta l}{\delta u} \cdot \diff x \right) \diff t - \nabla \left(\frac{\delta l}{\delta D} \right) \cdot \diff x \diff t &+& \frac{1}{D} \frac{\delta l}{\delta \theta} \nabla \theta \cdot \diff x \diff t  = -\displaystyle \sum_{i=1}^\infty \mathcal{L}_{\xi_i} \left( \frac{1}{D} \frac{\delta l}{\delta u} \cdot \diff x \right) \circ \diff B_i(t), \\
 \diff \theta  + \mathcal{L}_{u} \theta \diff t &=& -\displaystyle \sum_{i=1}^\infty \mathcal{L}_{\xi_i} \theta \circ \diff B_i(t), \\
 \diff D  + \mathcal{L}_{u} D \diff t &=& -\displaystyle \sum_{i=1}^\infty \mathcal{L}_{\xi_i} D \circ \diff B_i(t).
\end{eqnarray*}}
Of course, it is useful to be able to write these equations in It\^o form as well, which can be effected by using the It\^o correction.
\begin{proposition} \label{pasoito}
The It\^o form of our stochastic equations for a general Lagrangian is
\begin{eqnarray*}
\diff  \left( \frac{1}{D} \frac{\delta l}{\delta u} \cdot \diff x \right) &+& \mathcal{L}_{u} \left( \frac{1}{D} \frac{\delta l}{\delta u} \cdot \diff x \right) \diff t - \nabla \left(\frac{\delta l}{\delta D} \right) \cdot \diff x \diff t + \frac{1}{D} \frac{\delta l}{\delta \theta} \nabla \theta \cdot \diff x \diff t  \\
&+& \sum_{i=1}^\infty \mathcal{L}_{\xi_i} \left( \frac{1}{D} \frac{\delta l}{\delta u} \cdot \diff x \right) \diff B_i(t) = \frac{1}{2} \sum_{i=1}^\infty \mathcal{L}_{\xi_i} \left( \mathcal{L}_{\xi_i} \left( \frac{1}{D} \frac{\delta l}{\delta u} \cdot \diff x \right) \right) \diff t, \\
\diff \theta &+& \mathcal{L}_{u} \theta \diff t + \sum_{i=1}^\infty \mathcal{L}_{\xi_i} \theta \diff B_i(t)  = \frac{1}{2} \sum_{i=1}^\infty \mathcal{L}_{\xi_i}(
\mathcal{L}_{\xi_i} \theta ) \diff t  , \\
\diff D  &+& \mathcal{L}_{u} D \diff t  + \sum_{i=1}^\infty \mathcal{L}_{\xi_i} D \diff B_i(t) = \frac{1}{2} \sum_{i=1}^\infty  \mathcal{L}_{\xi_i} (\mathcal{L}_{\xi_i} D) \diff t .
\end{eqnarray*}
\end{proposition}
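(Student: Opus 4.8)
The plan is to obtain the three Itô equations of Proposition~\ref{pasoito} from their Stratonovich forms in Subsection~\ref{StatoIto} by applying the elementary conversion identity $\int_0^t Y\circ\diff Z = \int_0^t Y\,\diff Z + \tfrac12[Y,Z]_t$ to each noise term. All three Stratonovich equations share the same shape: a drift in $\diff t$ plus a series of Stratonovich terms of the form $-\mathcal{L}_{\xi_i}(q)\circ\diff B_i$, where $q$ denotes, in turn, the momentum one-form $\frac1D\frac{\delta l}{\delta u}\cdot\diff x$, the potential temperature $\theta$, and the density $D$. Hence it suffices to run the computation once for a generic advected quantity $q$ solving $\diff q + (\text{drift})\,\diff t = -\sum_i \mathcal{L}_{\xi_i} q\circ \diff B_i$, and then specialise.

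First I would record the structural facts that make the Lie-derivative algebra go through: since each $\xi_i$ is a prescribed, time-independent, smooth (divergence-free) vector field, $\mathcal{L}_{\xi_i}$ is a first-order linear differential operator with smooth coefficients which commutes with $\partial_t$ and, in the one-form case, with the exterior derivative $\diff$; in particular $\diff(\mathcal{L}_{\xi_i} q)=\mathcal{L}_{\xi_i}(\diff q)$. Next I would compute the joint quadratic variation $[\mathcal{L}_{\xi_i} q, B_i]$. The martingale part of $\diff q$ is read off the equation as $-\sum_j \mathcal{L}_{\xi_j} q\,\diff B_j$ — identical in Itô and Stratonovich form, since the two differ only by a $\diff t$-correction that contributes nothing to any bracket — so the martingale part of $\diff(\mathcal{L}_{\xi_i} q)$ is $-\sum_j \mathcal{L}_{\xi_i}\mathcal{L}_{\xi_j} q\,\diff B_j$. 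Using independence of the Brownian motions, $\diff[B_j,B_i]=\delta_{ij}\,\diff t$, whence $\diff[\mathcal{L}_{\xi_i} q, B_i] = -\mathcal{L}_{\xi_i}\mathcal{L}_{\xi_i} q\,\diff t$.

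Substituting this into the Stratonovich--Itô identity gives
\[ -\sum_{i} \mathcal{L}_{\xi_i} q \circ \diff B_i = -\sum_{i} \mathcal{L}_{\xi_i} q \,\diff B_i + \tfrac12\sum_{i} \mathcal{L}_{\xi_i}\mathcal{L}_{\xi_i} q \,\diff t, \]
and moving the Itô noise term to the left-hand side yields exactly the three equations claimed, with $q$ taken successively to be $\frac1D\frac{\delta l}{\delta u}\cdot\diff x$, $\theta$ and $D$. The remaining drift terms in each equation ($\mathcal{L}_u q$, $\nabla(\delta l/\delta D)\cdot\diff x$, $\frac1D\frac{\delta l}{\delta\theta}\nabla\theta\cdot\diff x$) all carry a $\diff t$ factor and so pass through unchanged; in particular one never needs to expand the $\frac1D$ prefactor, since the one-form is treated as a single evolving object.

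The main obstacle is analytic rather than algebraic: justifying the interchange of the sum over $i$ with stochastic integration and the convergence of the Itô-correction series $\sum_i \mathcal{L}_{\xi_i}\mathcal{L}_{\xi_i} q$ in a suitable function space. I would deal with this by first carrying out the computation for a truncated noise, $\xi_1,\dots,\xi_N$, where every step is classical, and then letting $N\to\infty$ using the summability and regularity hypotheses imposed on the family $\{\xi_i\}$ in Section~\ref{prel:not:main} (which control $\sum_i\|\xi_i\|_{C^k}^2$, hence the convergence of the bracket series), together with a Burkholder--Davis--Gundy estimate to control the martingale tails. It should be noted that, at this stage, the identity is a formal one: it presupposes a solution regular enough for $\mathcal{L}_{\xi_i}\mathcal{L}_{\xi_i} q$ to be well defined, which is precisely what is guaranteed later for the hyper-regularised truncated system on which the identity is actually used.
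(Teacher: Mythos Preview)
Your proposal is correct and matches the paper's approach: the paper does not give a formal proof of Proposition~\ref{pasoito} but only a remark immediately afterwards, computing the cross-variation $[\mathcal{L}_{\xi_i} u, B^i]_t = \mathcal{L}_{\xi_i}[u,B^i]_t = -\int_0^t \mathcal{L}_{\xi_i}^2 u\,\diff s$ via exactly the Stratonovich--It\^o conversion identity you invoke. Your treatment is in fact more thorough than the paper's, since you handle all three advected quantities uniformly and explicitly flag the truncation/limit argument needed for the infinite sum, which the paper leaves implicit.
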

Note that the stochastic incompressible Boussinesq equations in Stratonovich form can be expressed as
\begin{eqnarray}
\diff u +  (u \cdot \nabla u - \theta \hat{e}_2) \diff t + u_j \nabla \bar{u}_j + \sum_{i=1}^\infty \mathcal{L}_{\xi_i} u \circ \diff B_t^i &=& \nabla (|u|^2/2)\diff t  - \nabla p \diff t, \label{StraBou1} \\
\diff \theta + u \cdot \nabla \theta \diff t + \sum_{i=1}^\infty \mathcal{L}_{\xi_i} \theta \circ \diff B_t^i  &=&0, \label{StraBou2} \\
\nabla \cdot u &=& 0,   \label{StraBou3}\\
\nabla \cdot \xi_i &=& 0. \label{StraBou4}
\end{eqnarray}
To rewrite equations \eqref{StraBou1}-\eqref{StraBou4} in It\^o form we apply Proposition \ref{pasoito}, thus obtaining 
\makeatletter
 \def\@eqnnum{{\normalsize \normalcolor (\theequation)}}
  \makeatother
 { \small	
\begin{eqnarray}
\diff u +  (u \cdot \nabla u - \theta \hat{e}_2) \diff t + u_j \nabla \bar{u}_j  + \sum_{i=1}^\infty \mathcal{L}_{\xi_i} u \diff B_t^i &=& \frac{1}{2} \sum_{i=1}^\infty \mathcal{L}^2_{\xi_i} u \diff t + \nabla (|u|^2/2)  \diff t- \nabla p \diff t, \label{ItoBou1}\\
\diff \theta + u \cdot \nabla \theta \diff t +  \sum_{i=1}^\infty \mathcal{L}_{\xi_i} \theta \diff B_t^i  &=& \frac{1}{2} \sum_{i=1}^\infty \mathcal{L}^2_{\xi_i} \theta \diff t, \label{ItoBou2}\\
\nabla \cdot u &=& 0,  \label{ItoBou3}\\
\nabla \cdot \xi_i &=& 0.  \label{ItoBou4}
\end{eqnarray}}
\begin{remark}
As a reminder, note that if one wants to prove Proposition \ref{pasoito}, or as a particular case, to derive
\eqref{ItoBou1}-\eqref{ItoBou4} from \eqref{StraBou1}-\eqref{StraBou4}, one has to calculate the cross-variational terms coming from the identity
$$\int_0^t f \circ d B_s  = \int_0^t f d B_s + \frac{1}{2} \left[f, B \right], $$
where $\left[\cdot, \cdot \right]$ represents the cross-variation between two stochastic processes. So, in our case
$$\left[\mathcal{L}_{\xi_i} u, B^i \right]_t = \mathcal{L}_{\xi_i} \left[ u, B^i \right]_t = - \mathcal{L}_{\xi_i} \int_0^t \mathcal{L}_{\xi_i} u(\cdot, s) \diff s =  -  \int_0^t \mathcal{L}^2_{\xi_i} u(\cdot, s) \diff s.$$
A similar result is also obtained for $\theta$, namely
$$\left[\mathcal{L}_{\xi_i} \theta, B^i \right]_t =  -  \int_0^t \mathcal{L}^2_{\xi_i} \theta(\cdot, s) \diff s.$$
\end{remark}
Finally, since we prefer to avoid the pressure term when working with equations \eqref{ItoBou1}-\eqref{ItoBou4}, we take the curl operator on the first equation, obtaining
\begin{eqnarray}
  \diff \omega + \mathcal{L}_{u} \omega \diff t+\displaystyle\sum_{i=1}^{\infty}  \mathcal{L}_{\xi_{i}} \omega \diff B^{i}_{t}&=& \frac{1}{2} \displaystyle\sum_{i=1}^{\infty}  \mathcal{L}^{2}_{\xi_{i}}\omega \diff t+\partial_{x} \theta \diff t,  \label{eq:vorticity:ito} \\
  \diff \theta + \mathcal{L}_{u} \theta \diff t + \displaystyle\sum_{i=1}^{\infty}  \mathcal{L}_{\xi_{i}}\theta \diff B^{i}_{t}&=& \frac{1}{2} \displaystyle\sum_{i=1}^{\infty}  \mathcal{L}^{2}_{\xi_{i}}\theta \diff t, \label{eq:theta:ito}
\end{eqnarray}
where $\omega=\nabla^{\perp}\cdot u=\partial_{x}u_{2}-\partial_{y}u_{1}$ is the vorticity. To close the system, the velocity $u$ can be calculated from $\omega$ by using the Biot-Savart law (see \ref{Biot}) and the divergence-free condition $\nabla \cdot u=0$.

\newpage

\section{Preliminaries, notation and main results} \label{prel:not:main}
\subsection{Preliminaries and notation}
The first result featured in this paper shows local existence in time and uniqueness of regular solutions of the stochastic Boussinesq equations \eqref{eq:vorticity:ito}-\eqref{eq:theta:ito}. The system is defined on $\mathbb{T}^{2}\times[0,\infty)$, with $\mathbb{T}^{2}=[-\pi,\pi]^{2}=\mathbb{R}^{2}/(2\pi\mathbb{Z}^{2})$ being the two-dimensional square torus. Note that we have chosen to focus on the periodic case for the sake of simplicity; however, results can be straightforwardly extended to the whole domain $\mathbb{R}^{2}$. In the presence of boundaries, this is, for smooth bounded domains $\Omega\subset\mathbb{R}^{2}$, a more careful analysis is required and presents a future line of research. We next introduce the functional setting and some mathematical background which will be used throughout this article.
 \newline 
\subsubsection{Sobolev spaces and embeddings.} Sobolev spaces are defined as
\[ W^{k,p}:=\lbrace f\in L^{p}(\mathbb{T}^{2},\mathbb{R}):  (I-\Delta)^{k/2}f\in L^{p}(\mathbb{T}^2,\mathbb{R}) \rbrace, \]
for any $k\geq0$ and $p\in[1,\infty],$ equipped with the norm $||f||_{W^{k,p}}=||(I-\Delta)^{k/2}f||_{L^{p}}$. Here, we denote by $(I-\Delta)^{k/2}f$ to be the function having Fourier transform $(1+|\xi|^{2})^{k/2}\hat{f}(\xi),$ where $\hat{f}$ represents the Fourier transform of $f$. Sometimes we will also use the notation $\Lambda^{k}=(-\Delta)^{k/2}$. Recall that $L^{2}$ based spaces are Hilbert spaces and may alternatively be denote by $H^{k}=W^{k,2}$. For $k>0$, we also define $H^{-k}:=(H^{k})^{\star}$, i.e. the dual space of $H^k$. Along the paper we will be using different forms of Sobolev embeddings. For the sake of clarity, we collect below the ones we will most often make use of:
\begin{eqnarray}
\left\Vert f \right\Vert_{L^{4}} &\lesssim& \left\Vert f \right\Vert ^{1/2}_{L^{2}}  \left\Vert \nabla f \right\Vert ^{1/2}_{L^{2}}, \label{Sob:ine1} \\
\left\Vert \nabla f \right\Vert_{L^{4}} &\lesssim& \left\Vert f \right\Vert ^{1/2}_{L^{\infty}}  \left\Vert \Delta f \right\Vert ^{1/2}_{L^{2}}, \label{Sob:ine3} \\
\left\Vert f \right\Vert_{L^{\infty}} &\lesssim& \left\Vert f \right\Vert_{H^{1+\epsilon}}, \ \ \text{for every } \epsilon>0. \label{Sob:ine2}
\end{eqnarray} 

\subsubsection{The Biot-Savart operator.} \label{Biot} As we previously mentioned, in order to close the system of partial differential equations, we need to be able to calculate $u$ from the vorticity $\omega$. This reconstruction is obtained by means of the Biot-Savart operator, namely $u=K\star\omega=\nabla^{\perp}\Delta^{-1} \omega$. As a consequence, it is easy to check that the following inequality 
\begin{equation}\label{BiotIne}
 ||u||_{W^{k+1,p}}\leq C_{k,p} ||\omega||_{W^{k,p}},
\end{equation}
holds for all $k\geq 0, p\in(1,\infty),$ where $C_{k,p}=C(k,p)$ denotes a positive constant.
\newline 

\subsubsection{Assumptions on the vector fields $\xi_{i}$.} To give a reasonable meaning to the stochastic terms and to show certain estimates we need to impose the following assumption. The vector fields 
\\$\xi_i: \mathbb{T}^2 \rightarrow \mathbb{R}^2$ are assumed to be of class $C^4$ and to satisfy 
\begin{equation}
\displaystyle\sum_{i=1}^{\infty} \left| \left| \xi_i \right| \right|_{H^{3}}^2 < \infty. \label{assumption3}
\end{equation}
With this assumption in mind, it is easy to check that for smooth enough functions $f$:
\begin{eqnarray}
\left| \left|\displaystyle\sum_{i=1}^{\infty} \mathcal{L}^2_{\xi_i} f \right| \right|_{L^2}^2 &\lesssim & ||f||_{H^2}^2, \label{assumption1} \\ 
\displaystyle\sum_{i=1}^{\infty} \langle \mathcal{L}_{\xi_i} f, \mathcal{L}_{\xi_i} f \rangle_{L^2} &\lesssim &||f||_{H^2}^2 \label{assumption2}.
\end{eqnarray}
Inequalities \eqref{assumption3}-\eqref{assumption2} will be frequently applied throughout this article. Moreover, since during the proofs of some of the main uniqueness and existence theorems several high order terms appear in the energy estimates (similar to the ones in \cite{CriHolFla}), one needs to make use of some facts which are collected in the following proposition. Its most general version appears in Appendix \ref{appendixA}, where we also comment on how to use this result for showing existence and uniqueness results in a more general class of SPDEs.
\begin{proposition}\label{Liecancellations}
Let $f\in H^{2}(\mathbb{T}^{2}, \mathbb{R})$ and $\xi_i$ be vector fields satisfying \eqref{assumption3} . Then we have
\begin{equation}\label{eq:cancellation1}
\langle \mathcal{L}^2_{\xi_i} f, f \rangle_{L^2} +  \langle \mathcal{L}_{\xi_i} f, \mathcal{L}_{\xi_i} f \rangle_{L^2} = 0.
\end{equation}
Moreover, if $f\in H^{k+2}(\mathbb{T}^{2},\mathbb{R})$ and $\xi_{i}$ are of class $C^{k+1}$ satisfying
\[\displaystyle\sum_{i=1}^{\infty} ||\xi_{i}||^{2}_{H^{k+1}} < \infty, \]
there exists a positive constant $C=C(i)$ such that
\begin{equation}\label{eq:cancellation2}
\displaystyle\sum_{i=1}^{\infty}\langle \Lambda^k\mathcal{L}^2_{\xi_i} f, \Lambda^k f \rangle_{L^2} +  \langle \Lambda^k \mathcal{L}_{\xi_i} f, \Lambda^k \mathcal{L}_{\xi_i} f \rangle_{L^2} \leq C ||f||_{H^k}^2, 
\end{equation}
for every $k\in[1, \infty)$.
\end{proposition}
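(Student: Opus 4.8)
The plan is to prove the identity \eqref{eq:cancellation1} by direct integration by parts, exploiting the fact that $\mathcal{L}_{\xi_i} = \xi_i \cdot \nabla$ acting on scalars, together with $\nabla \cdot \xi_i = 0$. The key elementary observation is that for a divergence-free vector field $\xi$, the operator $\mathcal{L}_\xi = \xi\cdot\nabla$ is \emph{skew-adjoint} on $L^2(\mathbb{T}^2)$: indeed $\langle \xi\cdot\nabla f, g\rangle_{L^2} = -\langle f, \nabla\cdot(\xi g)\rangle_{L^2} = -\langle f, \xi\cdot\nabla g\rangle_{L^2}$, using periodicity to kill the boundary term and $\nabla\cdot\xi = 0$ to drop the extra term. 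Consequently $\langle \mathcal{L}_{\xi_i}^2 f, f\rangle_{L^2} = -\langle \mathcal{L}_{\xi_i} f, \mathcal{L}_{\xi_i}^{\mathsf{T}} f\rangle_{L^2}$ and since $\mathcal{L}_{\xi_i}^{\mathsf{T}} = -\mathcal{L}_{\xi_i}$ we get $\langle \mathcal{L}_{\xi_i}^2 f, f\rangle_{L^2} = -\langle \mathcal{L}_{\xi_i} f, \mathcal{L}_{\xi_i} f\rangle_{L^2}$, which is exactly \eqref{eq:cancellation1}. One should note that the regularity $f \in H^2$ is precisely what is needed to make $\mathcal{L}_{\xi_i}^2 f$ well-defined in $L^2$ (two derivatives on $f$, with $\xi_i \in C^4$ absorbing the rest); a density argument approximating $f$ by smooth functions justifies the integration by parts rigorously.

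For the higher-order estimate \eqref{eq:cancellation2}, the clean skew-adjointness is destroyed because $\Lambda^k$ and $\mathcal{L}_{\xi_i}$ do not commute. The strategy is to write the commutator $[\Lambda^k, \mathcal{L}_{\xi_i}] =: R_i$ and expand both inner products in terms of $R_i$. Concretely, $\Lambda^k \mathcal{L}_{\xi_i}^2 f = \mathcal{L}_{\xi_i}\Lambda^k \mathcal{L}_{\xi_i} f + R_i \mathcal{L}_{\xi_i} f = \mathcal{L}_{\xi_i}^2 \Lambda^k f + \mathcal{L}_{\xi_i} R_i f + R_i \mathcal{L}_{\xi_i} f$, and similarly $\Lambda^k \mathcal{L}_{\xi_i} f = \mathcal{L}_{\xi_i}\Lambda^k f + R_i f$. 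Substituting into the left-hand side of \eqref{eq:cancellation2} and using the first-order cancellation \eqref{eq:cancellation1} applied to $\Lambda^k f$ (which kills the leading term $\langle \mathcal{L}_{\xi_i}^2 \Lambda^k f, \Lambda^k f\rangle + \langle \mathcal{L}_{\xi_i}\Lambda^k f, \mathcal{L}_{\xi_i}\Lambda^k f\rangle = 0$), what remains are cross terms of the form $\langle \mathcal{L}_{\xi_i} R_i f, \Lambda^k f\rangle$, $\langle R_i \mathcal{L}_{\xi_i} f, \Lambda^k f\rangle$, $\langle \mathcal{L}_{\xi_i}\Lambda^k f, R_i f\rangle$, and $\langle R_i f, R_i f\rangle$. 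Each of these must be bounded by $\|f\|_{H^k}^2$ up to the constant involving $\|\xi_i\|_{H^{k+1}}$.

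The main obstacle is the estimate of these commutator terms, and in particular it requires the standard commutator (Kato–Ponce type) estimate: since $\mathcal{L}_{\xi_i}$ is a first-order operator, $[\Lambda^k, \mathcal{L}_{\xi_i}]$ is a pseudodifferential operator of order $k$ (the top-order parts cancel), so $\|R_i f\|_{L^2} \lesssim \|\xi_i\|_{C^{k+1}}\, \|f\|_{H^k}$, or more sharply $\|R_i f\|_{L^2} \lesssim \|\nabla \xi_i\|_{L^\infty}\|f\|_{H^k} + \|\xi_i\|_{H^{k+1}}\|f\|_{W^{1,\infty}}$. The delicate point is the term $\langle \mathcal{L}_{\xi_i} R_i f, \Lambda^k f\rangle$, where $\mathcal{L}_{\xi_i} R_i f$ is only in $H^{-1}$ a priori; here one integrates by parts once more to move $\mathcal{L}_{\xi_i}$ onto $\Lambda^k f$ (using skew-adjointness), giving $-\langle R_i f, \mathcal{L}_{\xi_i}\Lambda^k f\rangle$, which is then a pairing of two terms each controlled in the right norms — but one must track that $\mathcal{L}_{\xi_i}\Lambda^k f$ is only in $L^2$ when $f \in H^{k+1}$, matching the stated hypothesis $f\in H^{k+2}$ (with a half-derivative to spare). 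Finally, summing over $i$ and invoking the hypothesis $\sum_i \|\xi_i\|_{H^{k+1}}^2 < \infty$ together with $H^{k+1}(\mathbb{T}^2) \hookrightarrow C^1$ for $k\geq 1$ yields the bound $C\|f\|_{H^k}^2$. I would organize the write-up by first isolating the commutator estimate as a sublemma, then doing the algebraic expansion, so that the cancellation of the top-order term is transparent.
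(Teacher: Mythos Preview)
Your proof of \eqref{eq:cancellation1} is correct and essentially identical to the paper's: skew-adjointness of $\mathcal{L}_{\xi_i}$ for divergence-free $\xi_i$ gives the identity immediately.

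For \eqref{eq:cancellation2}, however, your argument has a genuine gap. After the leading cancellation $\langle \mathcal{L}_{\xi_i}^2 \Lambda^k f, \Lambda^k f\rangle + \langle \mathcal{L}_{\xi_i}\Lambda^k f, \mathcal{L}_{\xi_i}\Lambda^k f\rangle = 0$, the residual terms you list (after one integration by parts) reduce to
\[
\langle R_i \mathcal{L}_{\xi_i} f,\, \Lambda^k f\rangle_{L^2} \;+\; \langle \mathcal{L}_{\xi_i}\Lambda^k f,\, R_i f\rangle_{L^2} \;+\; \langle R_i f,\, R_i f\rangle_{L^2}.
\]
The last term is fine, but the first two are \emph{not} individually bounded by $\|f\|_{H^k}^2$: each one is only controlled by $\|f\|_{H^{k+1}}\|f\|_{H^k}$, since $R_i\mathcal{L}_{\xi_i}$ and $\mathcal{L}_{\xi_i}\Lambda^k$ are genuinely of order $k+1$. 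Your remark that ``$\mathcal{L}_{\xi_i}\Lambda^k f$ is in $L^2$ when $f\in H^{k+1}$, matching the hypothesis $f\in H^{k+2}$'' confuses well-definedness of the pairing with the required quantitative bound: the constant $C$ in \eqref{eq:cancellation2} must be independent of $f$, so a bound involving $\|f\|_{H^{k+1}}$ is useless here.

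The missing idea, which the paper supplies, is a \emph{second} commutator. One integrates by parts on $\langle \mathcal{L}_{\xi_i}\Lambda^k f, R_i f\rangle = -\langle \Lambda^k f, \mathcal{L}_{\xi_i} R_i f\rangle$ and then combines the two bad terms:
\[
\langle R_i \mathcal{L}_{\xi_i} f,\, \Lambda^k f\rangle - \langle \mathcal{L}_{\xi_i} R_i f,\, \Lambda^k f\rangle \;=\; \langle [R_i,\mathcal{L}_{\xi_i}] f,\, \Lambda^k f\rangle.
\]
Since $R_i = [\Lambda^k,\mathcal{L}_{\xi_i}]$ is a pseudodifferential operator of order $k$ and $\mathcal{L}_{\xi_i}$ is of order $1$, the commutator $T_2 := [R_i,\mathcal{L}_{\xi_i}]$ is again of order $k$, and now $|\langle T_2 f, \Lambda^k f\rangle| \lesssim \|f\|_{H^k}^2$ as required. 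This double-commutator structure --- losing one derivative at each commutation --- is precisely what makes the estimate close; bounding the cross terms separately via Kato--Ponce, as you propose, cannot work.
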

\begin{remark}
Estimates (\ref{eq:cancellation1}) and (\ref{eq:cancellation2}) are very surprising, since the terms of highest order and one order less cancel. This turns out to be a general property regarding differential operators (see Appendix \ref{appendixA}).
\end{remark}
\subsubsection{Theory of analytical semigroups.} For the sake of completeness, we also include several facts from the theory of analytic semigroups which will be useful later on. For any fixed $k \in \mathbb{N}$ let us denote $\mathcal{D}(A)= H^{2k}(\mathbb{T}^{2},\mathbb{R}),$ and define the operator $A:\mathcal{D}(A)\to L^{2}(\mathbb{T}^{2},\mathbb{R})$ by $Af=\nu\Delta^{k}f$, with $\nu$ a positive real number. This operator is self-adjoint and negative definite. Let $e^{tA}$ be the semigroup generated by the operator $A$ in $L^{2}(\mathbb{T}^{2},\mathbb{R})$. The fractional powers $(I-A)^{\alpha}$ are well-defined for every $\alpha>0$. Moreover, we have
\[ \left\Vert f \right\Vert_{H^{2k\alpha}}\leq C_{\alpha}||(I-A)^{\alpha} f||_{L^{2}}, \]
for some $C_{\alpha}>0$ and $f\in H^{2k\alpha}(\mathbb{T}^{2},\mathbb{R})$. The fractional powers commute with the semigroup $e^{tA}$ (cf. \cite{Pazy}), and have the following property
\begin{equation}\label{semigroupine}
\left\Vert (I-A)^{\alpha}e^{tA}f\right\Vert_{L^{2}} \leq \frac{C_{\alpha}}{t^\alpha} \left\Vert f \right\Vert_{L^2},
\end{equation}
for all $t \in (0,T]$ and functions $f\in L^{2}(\mathbb{T}^{2},\mathbb{R}).$ With this property in hand, let us prove the following statement.
\begin{lemma}\label{estimation:semigroup}
Let $f \in C([0,T];L^{2}(\mathbb{T}^{2},\mathbb{R})),$ $f_i \in C([0,T];L^{2}(\mathbb{T}^{2},\mathbb{R})), i \in \mathbb{N},$ and $t\in(0,T].$ We have that
\begin{equation}\label{estimation:semigroup1}
\left\Vert \int_{0}^{t}e^{\left(  t-s\right)  A}f\left(  s\right)
\diff s\right\Vert _{H^{\beta}}^{2}\lesssim  T^{2-\beta / k}\sup_{t\in\left[  0,T\right]  }\left\Vert f\left(
s\right)  \right\Vert _{L^{2}}^{2},
\end{equation}
for  $0<\beta<k$. Moreover,
\begin{equation} \label{estimation:semigroup2}
\mathbb{E}\left[  \sup_{t\in\left[  0,T\right]  }\left\Vert \sum_{i=1}^{\infty}\int_{0}^{t}e^{\left(  t-s\right)  A}f_{i}\left(  s\right)  \diff B_{s}^{i}\right\Vert
_{H^{\beta}}^{2}\right] \lesssim T^{2-\beta/k}\ \mathbb{E}\left[  \sup_{s\in\left[ 0,T\right]  }\sum_{i=1}^{\infty
}\left\Vert f_{i}\left(  s\right)  \right\Vert _{L^{2}}^{2}\right],
\end{equation}
for $0<\beta<k$.  
\end{lemma}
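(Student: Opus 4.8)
The plan is to prove the two estimates separately, in both cases reducing to the key semigroup smoothing bound \eqref{semigroupine} together with elementary integral and martingale inequalities. For the deterministic estimate \eqref{estimation:semigroup1}, I would first use the norm equivalence $\|g\|_{H^\beta}\lesssim \|(I-A)^{\beta/(2k)}g\|_{L^2}$ (valid since $\beta<k$, so $\beta/(2k)<1/2<1$ and the fractional power is well-defined), commute the fractional power through the integral and the semigroup, and apply \eqref{semigroupine} with $\alpha=\beta/(2k)$ to get
\[
\left\Vert \int_0^t e^{(t-s)A}f(s)\,\diff s\right\Vert_{H^\beta}
\lesssim \int_0^t \frac{C}{(t-s)^{\beta/(2k)}}\,\|f(s)\|_{L^2}\,\diff s
\leq \sup_{s\in[0,T]}\|f(s)\|_{L^2}\int_0^t (t-s)^{-\beta/(2k)}\,\diff s.
\]
Since $\beta/(2k)<1/2<1$ the time integral converges and equals $\frac{t^{1-\beta/(2k)}}{1-\beta/(2k)}\lesssim T^{1-\beta/(2k)}$; squaring gives the factor $T^{2-\beta/k}$, which is exactly the claimed bound.

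For the stochastic estimate \eqref{estimation:semigroup2}, the plan is to combine the same fractional-power/semigroup manipulation with a Burkholder--Davis--Gundy inequality for Hilbert-space-valued martingales. First I would apply BDG to the $H^\beta$-valued stochastic convolution $M_t:=\sum_i\int_0^t e^{(t-s)A}f_i(s)\,\diff B_s^i$ to bound $\mathbb{E}\sup_{t\in[0,T]}\|M_t\|_{H^\beta}^2$ by (a constant times) the expected quadratic variation $\mathbb{E}\int_0^T \sum_i \|e^{(T-s)A}f_i(s)\|_{H^\beta}^2\,\diff s$ — here one must be a little careful because the integrand $e^{(t-s)A}f_i(s)$ depends on the running time $t$, so strictly one applies BDG after using the semigroup property to factor $e^{(t-s)A}=e^{(t-T)A}e^{(T-s)A}$ and the contraction property of $e^{tA}$ on $H^\beta$, or one works with the stopped convolutions directly. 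Then, as before, $\|e^{(T-s)A}f_i(s)\|_{H^\beta}^2\lesssim (T-s)^{-\beta/k}\|f_i(s)\|_{L^2}^2$, so summing over $i$ and integrating in $s$ produces $\int_0^T (T-s)^{-\beta/k}\,\diff s\cdot \mathbb{E}\sup_{s}\sum_i\|f_i(s)\|_{L^2}^2$. Again $\beta/k<1$ guarantees integrability and yields the factor $T^{1-\beta/k}\cdot T$ — wait, more precisely one gets $T^{2-\beta/k}$ after accounting for the extra power of $T$ that comes from pulling the supremum out of the time integral; I would track the exponents carefully here.

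The main obstacle I anticipate is handling the $t$-dependence inside the stochastic convolution cleanly. The naive application of BDG treats $\int_0^t g(t,s)\,\diff B_s^i$ as a martingale in $t$, but it is not — the integrand changes with the terminal time. The standard fixes are either (i) the factorization trick $e^{(t-s)A}=e^{(t-T)A}\,e^{(T-s)A}$ combined with the fact that $\sup_{0\le r\le T}\|e^{rA}\|_{\mathcal{L}(H^\beta)}\le 1$ (since $A$ generates a contraction semigroup on every $H^\beta$), reducing the sup over $t$ to a sup of a fixed convolution, or (ii) invoking a maximal inequality for stochastic convolutions directly (as in Da Prato--Zabczyk). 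Either way, the estimate \eqref{semigroupine} for the fractional power, the contractivity of $e^{tA}$, BDG, and the integrability $\beta/k<1$ are the only ingredients; once the $t$-dependence is dealt with, the computation is routine. A secondary, purely bookkeeping point is to confirm that the condition $\beta<k$ (rather than $\beta<2k$) is what makes $\alpha=\beta/(2k)$ stay below $1/2$, which keeps the time singularity integrable even after squaring — this is presumably why the hypothesis is stated as $0<\beta<k$.
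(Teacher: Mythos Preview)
Your treatment of the deterministic estimate \eqref{estimation:semigroup1} is exactly the paper's argument: norm equivalence via $(I-A)^{\beta/2k}$, the smoothing bound \eqref{semigroupine} with $\alpha=\beta/2k$, and then integrating the resulting power of $(t-s)$. The paper phrases the last step as ``Jensen's inequality'' where you pull out the supremum, but these are the same manipulation.

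For the stochastic estimate the paper says only that it ``follows analogously,'' so you are right to be more careful about the fact that $t\mapsto \int_0^t e^{(t-s)A}f_i(s)\,\diff B_s^i$ is not a martingale. However, your proposed fix (i) does not work as written: the factorisation $e^{(t-s)A}=e^{(t-T)A}e^{(T-s)A}$ requires $t-T\ge 0$, and for $t\in[0,T]$ this exponent is non-positive; since $A=\nu\Delta^k$ generates only a forward (analytic, contractive) semigroup, $e^{(t-T)A}$ is undefined for $t<T$. So this trick cannot reduce the sup over $t$ to a single fixed convolution. Your alternative (ii) --- appealing to a maximal inequality for stochastic convolutions as in Da~Prato--Zabczyk, or equivalently the Da~Prato--Kwapie\'n--Zabczyk factorisation method --- is the correct route and is presumably what the paper has in mind by ``analogously.''

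One minor bookkeeping point: once you apply BDG/It\^o isometry and then the squared smoothing bound $\|e^{(t-s)A}f_i(s)\|_{H^\beta}^2\lesssim (t-s)^{-\beta/k}\|f_i(s)\|_{L^2}^2$, the time integral yields $T^{1-\beta/k}$ rather than $T^{2-\beta/k}$; there is no ``extra power of $T$'' to recover. This is actually a sharper bound than the one stated, so it does not affect the lemma's use in the paper, but your hesitation about the exponent is warranted. Your observation that the hypothesis $\beta<k$ (rather than $\beta<2k$) is dictated precisely by the integrability of $(t-s)^{-\beta/k}$ in the stochastic estimate is correct.
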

\begin{proof}[Proof of Lemma \ref{estimation:semigroup}]
We just show the first inequality, as the second follows analogously. Note that
\begin{eqnarray*}
\left\Vert \int_{0}^{t}e^{\left(  t-s\right)  A}f\left(  s\right)
\diff s\right\Vert _{H^{\beta}}&\leq & C_{\alpha} \left\Vert \left(  I-A\right)  ^{\beta / 2k}\int%
_{0}^{t}e^{\left(  t-s\right)  A}f\left(  s\right)  \diff s\right\Vert _{L^{2}}\leq
C_{\alpha} \int_{0}^{t}\frac{1}{\left(  t-s\right)  ^{\beta/2k}}\left\Vert f\left(
s\right)  \right\Vert _{L^{2}}\diff s,
\end{eqnarray*}
where we have used property (\ref{semigroupine}) with $\alpha = \beta / 2k$ for the second inequality. The assertion follows by Jensen's inequality.
\end{proof}
\subsubsection{Duhamel's principle and mild sense.} \label{Duhamel} In order to show global existence of the regularised equations (cf.\ \ref{globalregular}), we need to rewrite these in a convenient way, namely, as an abstract stochastic evolution equation
\begin{eqnarray}
\label{abstractformulation} 
\diff U + BU \ \diff t + \displaystyle\sum_{i=1}^{\infty}R_{i}U  \ \diff B^{i }_{t}&=& GU \ \diff t + LU \ \diff t,  \\
\label{abstractinitial} U(0)&=&U_{0},
\end{eqnarray}
where $U:=(\omega,\theta)$, $BU:=(u\cdot\nabla\omega,u\cdot\nabla\theta )$, $GU:=(\partial_{x}\theta,0)$, $LU:=\left( \dfrac{1}{2}\displaystyle\sum_{i=1}^{\infty} \mathcal{L}^{2}_{\xi_{i}}\omega,\dfrac{1}{2}\displaystyle\sum_{i=1}^{\infty} \mathcal{L}^{2}_{\xi_{i}}\theta \right),$ and $R_{i}U:= \left( \mathcal{L}_{\xi_{i}}\omega, \mathcal{L}_{\xi_{i}}\theta \right)$.
With this new formulation, we say that $U$ satisfies (\ref{abstractformulation})-(\ref{abstractinitial}) in the mild sense if
\[ U(t)=e^{tA}U_{0}-\int
_{0}^{t}e^{\left(  t-s\right)  A}(BU(s)-GU(s)) \ \diff s +\int_{0}^{t}e^{\left(  t-s\right)  A} LU(s) \ \diff s-
\displaystyle\sum_{i=1}^{\infty}\int_{0}^{t}e^{\left(  t-s\right)  A}R_{i}U(s) \diff B_{s}^{i}, \]
where $e^{tA}$ is the semigroup generated by $A$ defined previously. \\
\subsubsection{Compact embedding theorems}
We shall make use of a compact embedding result (see \cite{FlaGat}) which is a variation of the classical Aubin-Lions Lemma \cite{Lio69}. To this end we first recall some spaces of fractional in time derivative. Let $W$ be a Banach space and consider the space of functions 
\begin{eqnarray}  \label{dobleuve}
\{f:[0,T] \rightarrow W\}.
\end{eqnarray}
For fixed $p>1$ and $0<\alpha<1$, we define 
\[ W^{\alpha,p}([0,T]; W)=\bigg\{ f\in L^{p}([0,T]; W):  \int_0^T \int_0^T \frac{||f(t)-f(s)||_W^p}{|t-s|^{1+p\alpha}} \diff t \diff s< \infty \bigg\}. \] 
We endow this space with the following norm
\[ \norm{f}^{p}_{W^{\alpha,p}([0,T]; W)}:=  \int_0^T ||f(t)||_W^p \diff t + \int_0^T \int_0^T \frac{||f(t)-f(s)||_W^p}{|t-s|^{1+p\alpha}} \diff t \diff s. \]
We now have all the tools to state the compact embedding lemma. 
\begin{lemma}\label{compactlemma}Suppose that $X\subset Y\subset Z$  are Banach spaces with $X, Z$ reflexive, and that the embedding of $X$ into $Y$ is compact. Then for any $1<p<\infty$ and $0 < \alpha < 1$, the embedding:
\[ L^{p}([0,T]; X) \cap W^{\alpha,p}([0,T]; Z)\hookrightarrow L^{p}([0,T]; Y) \]
is compact.
\end{lemma}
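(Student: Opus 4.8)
The plan is to show that every sequence $(u_n)$ bounded in $L^{p}([0,T];X)\cap W^{\alpha,p}([0,T];Z)$ admits a subsequence converging strongly in $L^{p}([0,T];Y)$. The three ingredients are: an Ehrling-type interpolation inequality, which converts the compactness of the embedding $X\hookrightarrow Y$ into an $L^{p}$-in-time estimate; the reflexivity of $X$ and $Z$ together with $1<p<\infty$, which furnishes a weak limit; and a mollification-in-time argument that turns the fractional time regularity encoded in $W^{\alpha,p}([0,T];Z)$ into genuine strong convergence. The conceptual point is that the spatial compactness (from $X\hookrightarrow Y$) and the temporal compactness (from the Slobodeckij seminorm) enter through different mechanisms and are combined only at the very end.

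First I would establish Ehrling's lemma: since $X\hookrightarrow Y$ is compact and $Y\hookrightarrow Z$ is continuous, for every $\eta>0$ there is $C_{\eta}>0$ with $\norm{v}_{Y}\le \eta\norm{v}_{X}+C_{\eta}\norm{v}_{Z}$ for all $v\in X$ (proved by contradiction using the compact embedding). Raising to the $p$-th power and integrating in time yields $\norm{u}_{L^{p}([0,T];Y)}\lesssim \eta\norm{u}_{L^{p}([0,T];X)}+C_{\eta}\norm{u}_{L^{p}([0,T];Z)}$. Consequently, once the extracted subsequence is known to be bounded in $L^{p}([0,T];X)$, strong convergence in $L^{p}([0,T];Y)$ follows as soon as one proves strong convergence in the weaker space $L^{p}([0,T];Z)$, letting $\eta\to 0$ at the end. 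This reduces the whole problem to producing strong convergence in $L^{p}([0,T];Z)$.

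Next, using the reflexivity of $X$ and $Z$ (which makes both $L^{p}([0,T];X)$ and $W^{\alpha,p}([0,T];Z)$ reflexive for $1<p<\infty$, $0<\alpha<1$), a bounded sequence $(u_n)$ has a subsequence, not relabelled, converging weakly in both spaces to a common limit $u$ lying in the intersection. Subtracting $u$, I may assume $u_n\rightharpoonup 0$. To prove $u_n\to 0$ in $L^{p}([0,T];Z)$, I would introduce a time-mollification $u_n^{\delta}(t)=\int \rho_{\delta}(s)\,u_n(t-s)\,\diff s$, extending the $u_n$ suitably across the endpoints $t=0,T$. Two estimates drive the argument. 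On the one hand, the fractional regularity yields the uniform bound $\norm{u_n-u_n^{\delta}}_{L^{p}([0,T];Z)}\lesssim \delta^{\alpha}[u_n]_{W^{\alpha,p}([0,T];Z)}\lesssim \delta^{\alpha}$, which is small independently of $n$. On the other hand, for each fixed $\delta$ the averages $u_n^{\delta}(t)$ are bounded in $X$ uniformly in $n$ (by H\"older), hence relatively compact in $Y$ by the compact embedding, and since they converge weakly to $0$ in $Y$ they must converge strongly to $0$ in $Y$, and a fortiori in $Z$, for each fixed $t$; a uniform $L^{\infty}$-in-$t$ domination of $\norm{u_n^{\delta}(t)}_{Y}$ and bounded convergence then give $u_n^{\delta}\to 0$ in $L^{p}([0,T];Z)$ as $n\to\infty$. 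Combining the two estimates, sending first $n\to\infty$ and then $\delta\to 0$, produces $u_n\to 0$ in $L^{p}([0,T];Z)$, and Ehrling's inequality upgrades this to $u_n\to 0$ in $L^{p}([0,T];Y)$.

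I expect the main obstacle to be the first mollification estimate, namely the uniform control of time-translations by the Slobodeckij seminorm, $\norm{u_n(\cdot+h)-u_n}_{L^{p}([0,T-h];Z)}\lesssim h^{\alpha}[u_n]_{W^{\alpha,p}([0,T];Z)}$: this is the precise way in which the fractional-in-time regularity produces temporal equicontinuity, and it requires an averaging over the translation parameter rather than a naive restriction of the double integral to a single slice. A secondary technical nuisance is the treatment of the mollifier near the endpoints $t=0$ and $t=T$, together with making rigorous the pointwise-in-$t$ weak-to-strong upgrade and its subsequent integration in time. Both are standard features of Aubin--Lions--Simon theory and do not affect the overall scheme.
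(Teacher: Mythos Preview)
The paper does not supply its own proof of this lemma: it is stated as a known compact embedding result and attributed to Flandoli--Gatarek \cite{FlaGat}, with the classical Aubin--Lions lemma \cite{Lio69} cited as background. So there is no in-paper argument to compare against; the authors simply invoke the result.

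Your proposal is the standard Simon-type proof strategy (Ehrling interpolation to reduce to convergence in $L^{p}([0,T];Z)$, weak compactness from reflexivity, then a time-mollification argument exploiting the Slobodeckij seminorm), and it is sound in outline. You have also correctly flagged the one genuinely delicate point: the translation estimate $\norm{u_n(\cdot+h)-u_n}_{L^{p}([0,T-h];Z)}\lesssim h^{\alpha}[u_n]_{W^{\alpha,p}}$ is \emph{not} obtained by restricting the double integral defining the seminorm to the slice $t-s=h$, and a direct pointwise-in-$h$ bound of this form need not hold. What does hold, and what suffices for the mollification step, is an averaged bound of the type $\int_{0}^{\delta}\norm{u_n(\cdot+s)-u_n}_{L^{p}}^{p}\,s^{-1-\alpha p}\,\diff s\lesssim [u_n]_{W^{\alpha,p}}^{p}$, from which $\norm{u_n-u_n^{\delta}}_{L^{p}([0,T];Z)}\to 0$ uniformly in $n$ as $\delta\to 0$ follows after Jensen and Fubini. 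Since you already anticipate this, your plan would go through; it matches the argument one finds in Simon's paper and in the reference the authors cite.
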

\subsubsection{The stochastic framework.}
We next briefly recall some notions and aspects of the theory of stochastic analysis. We refer the reader to the classical references \cite{PraZab,Fla96,Fla11} for a more thorough review. We begin by fixing a stochastic basis
 $\mathcal{S}=(\Xi,\mathcal{F},\lbrace\mathcal{F}_{t}\rbrace_{t\geq 0}, \mathbb{P},\lbrace B^{i}\rbrace_{i\in\mathbb{N}}),$ that is, a filtered probability space together with a sequence $\lbrace B^{i} \rbrace_{i\in\mathbb{N}}$ of scalar independent Brownian motions relative to the filtration $\lbrace\mathcal{F}_{t}\rbrace_{t\geq0}$.  \\ \\
Given a stochastic process $X\in L^{2}(\Xi;L^{2}([0,\infty);L^{2}(\mathbb{T}^{2},\mathbb{R}))),$ one may define the It\^o stochastic integral by
\[ M_{t}= \int_{0}^{t} X \ \diff B= \displaystyle\sum_{i=1}^\infty \int_{0}^{t} X_{i} \ \diff B^{i}, \quad t > 0, \]
where $X_{i}= Xe_{i},$ being $\lbrace e_{i} \rbrace_{i\in \mathbb{N}}$ a complete orthonormal basis in $L^{2}(\mathbb{T}^{2},\mathbb{R})$. This definition makes $\{M_t\}_{t>0}$ an element of the square integrable martingales. The process $\lbrace M_{t} \rbrace_{t>0}$ enjoys many good properties. An important one is the so called Burkholder-Davis-Gundy inequality, which in the present context reads
\begin{equation}\label{BGD:ineq}
 \mathbb{E}\left[ \displaystyle \sup_{s \in [0,T]}\left| \int_{0}^{t} X_{s} \diff B_ {s}\right|^{p} \right] \leq C_p \mathbb{E} \left[ \int_{0}^{T} |X_s|^{2} \ dt \right]^{p/2},
\end{equation}
for any $p\geq 1$ and $C_{p}$ an absolute constant depending on $p$. \\\\
Finally, we review some classical and standard convergence tools from abstract probability theory. These results will be paramount for establishing the needed convergence of the associated hyper-regularised truncated equations to a solution. Let $(X,d)$ be a separable metric space and $\mathcal{B}(X)$ the Borel $\sigma$-algebra.  Let $\mathcal{P}(X)$ denote the collection of all the probability measures that can be defined on $(X,\mathcal{B}(X)).$ A set $\Gamma\subset \mathcal{P}(X)$ is said to be tight if, for every $\epsilon >0,$ there exists a compact subset $K_\epsilon \subset X$ such that 
\[ \mu (K_\epsilon) \geq 1 - \epsilon, \quad \forall \mu\in \Gamma. \]
We say that a sequence $\{\mu_n\}_{n \in \mathbb{N}} \subset \mathcal{P}(X)$ converges weakly to a probability measure $\mu$  if 
\[ \displaystyle\lim_{n \rightarrow \infty} \int_X \varphi \diff \mu_{n} = \int_X \varphi \diff \mu, \]
for all bounded continuous functions $\varphi: X \rightarrow \mathbb{R}.$  On the other hand, a set $\Gamma\subset\mathcal{P}(X)$ is weakly compact if every sequence $\lbrace \mu_{n} \rbrace_{n \in \mathbb{N}}\subset \Gamma$ has a weakly convergent subsequence. The proofs of the following two classical results can be found in \cite{PraZab},\cite{GyoKry}.
\begin{theorem}[Prokhorov] \label{prohorov}
The collection $\Gamma \subset \mathcal{P}(X)$ is weakly compact if and only if it is tight.
\end{theorem}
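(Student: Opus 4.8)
The plan is to prove the two implications separately, since they are of genuinely different character, and throughout I would use the equivalent formulation that weak compactness of $\Gamma$ means \emph{relative sequential} weak compactness: every sequence in $\Gamma$ admits a subsequence converging weakly to some element of $\mathcal{P}(X)$. The overall strategy is to transfer the problem to a compact ambient space where soft functional-analytic compactness is available, and then to use tightness (respectively, its failure) to relate the ambient limits back to $\mathcal{P}(X)$.

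\emph{Tightness implies weak compactness.} First I would exploit that $(X,d)$ is separable metrizable, hence embeds homeomorphically into the Hilbert cube $H=[0,1]^{\mathbb{N}}$, a compact metric space. Each $\mu\in\mathcal{P}(X)$ pushes forward to a Borel probability measure on $H$ carried by the image of $X$. On the compact space $H$ the Riesz representation theorem realises $\mathcal{P}(H)$ as a weak-$*$ closed subset of the closed unit ball of $C(H)^{*}$; Banach--Alaoglu makes that ball weak-$*$ compact, and separability of $C(H)$ renders the weak-$*$ topology metrizable on it. Thus $\mathcal{P}(H)$ is sequentially weak-$*$ compact, and on a compact metric space weak-$*$ convergence coincides with weak convergence of measures. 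Consequently any sequence $\{\mu_n\}\subset\Gamma$, read inside $\mathcal{P}(H)$, has a subsequence converging weakly to some $\mu\in\mathcal{P}(H)$.

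The substantive step is then to show $\mu$ places full mass on (the image of) $X$ and that convergence in fact takes place in the weak topology of $\mathcal{P}(X)$. This is exactly where tightness is used: given $\epsilon>0$, take the compact $K_\epsilon\subset X$ with $\mu_n(K_\epsilon)\ge 1-\epsilon$ for all $n$; since $K_\epsilon$ is compact, hence closed in $H$, the Portmanteau theorem yields $\mu(K_\epsilon)\ge\limsup_n\mu_n(K_\epsilon)\ge 1-\epsilon$. Letting $\epsilon\to 0$ along a sequence of such compacts shows $\mu$ is supported on $X$, so it restricts to an element of $\mathcal{P}(X)$, and testing against bounded continuous functions on $X$ upgrades the convergence to weak convergence in $\mathcal{P}(X)$.

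\emph{Weak compactness implies tightness.} Here I expect the main obstacle, and here completeness of $X$ must enter (so the equivalence is cleanest when $X$ is Polish). I would argue by contradiction after fixing $\eta>0$. Using separability, for each $m$ cover $X$ by countably many open balls $\{B^{(m)}_k\}_k$ of radius $1/m$. The heart of the matter is a uniform exhaustion: for each $m$ there must exist a finite $N_m$ with $\mu\big(\bigcup_{k\le N_m}B^{(m)}_k\big)>1-\eta\,2^{-m}$ for \emph{all} $\mu\in\Gamma$. If this failed for some $m$, I could choose $\mu_n\in\Gamma$ with $\mu_n\big(\bigcup_{k\le n}B^{(m)}_k\big)\le 1-\eta\,2^{-m}$; extracting a weakly convergent subsequence $\mu_n\to\mu$ and applying Portmanteau lower semicontinuity on the open set $\bigcup_{k\le j}B^{(m)}_k$ for each fixed $j$, then sending $j\to\infty$, would force $\mu(X)\le 1-\eta\,2^{-m}<1$, a contradiction. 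Granting this uniform finiteness, I set $K=\bigcap_m\overline{\bigcup_{k\le N_m}B^{(m)}_k}$, which is closed and totally bounded, hence compact by completeness, and satisfies $\mu(K)\ge 1-\sum_m\eta\,2^{-m}=1-\eta$ uniformly over $\Gamma$; this is precisely tightness. The delicate points are thus the uniform cut-off $N_m$ in the exhaustion, which relies on the interplay between relative compactness and semicontinuity on open sets, and, in the forward direction, ensuring the $H$-limit does not leak mass off $X$; both reduce to careful use of the Portmanteau characterisation together with compact approximation.
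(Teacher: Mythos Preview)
The paper does not prove Prokhorov's theorem at all; it merely states it as a classical result and refers the reader to \cite{PraZab} and \cite{GyoKry} for the proof. Your proposal therefore goes well beyond what the paper provides, supplying essentially the standard textbook argument (embedding into a compact ambient space via the Hilbert cube and Banach--Alaoglu for one direction, a uniform exhaustion-by-balls argument for the converse). Your sketch is correct, and you are right to flag that the converse direction requires completeness of $X$; the paper's hypothesis ``separable metric space'' is, strictly speaking, insufficient for the full equivalence, though in the applications later in the paper $X$ is always Polish.
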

\begin{theorem}[Skorokhod representation]  \label{rep}
Let $\{\mu_n\}_{n\in \mathbb{N}}$ be a sequence of probability measures that converges weakly to some measure $\mu$. Assume the support of $\mu$ is separable. Then there exists a probability space $(\Omega, \mathcal{A}, \mathbb{P})$ and random variables $\{X_n\}_{n=1}^\infty$, such that $X_n$ converges almost surely to a random variable $X$, where the laws of $X_{n}$ and $X$ are $\mu_{n}$ and $\mu,$ respectively.
\end{theorem}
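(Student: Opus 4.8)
The plan is to realize all the variables on a single canonical probability space, for which I take $(\Omega,\mathcal{A},\mathbb{P})=([0,1],\mathcal{B}([0,1]),\lambda)$ with $\lambda$ Lebesgue measure, and to produce $X_n,X$ with the prescribed laws by an explicit construction that forces almost sure convergence. I would first isolate the one-dimensional case $S=\mathbb{R}$, which is the engine of the whole argument. Writing $F_n,F$ for the distribution functions of $\mu_n,\mu$, define the quantile functions $X_n(\omega):=F_n^{\leftarrow}(\omega)=\inf\{x\in\mathbb{R}:F_n(x)\ge\omega\}$ and $X(\omega):=F^{\leftarrow}(\omega)$. These are nondecreasing, hence Borel, and the defining equivalence $X_n(\omega)\le x\iff\omega\le F_n(x)$ gives $\lambda(\{\omega:X_n(\omega)\le x\})=F_n(x)$, so $X_n$ has law $\mu_n$ and $X$ has law $\mu$. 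Weak convergence $\mu_n\to\mu$ is equivalent to $F_n(x)\to F(x)$ at every continuity point $x$ of $F$, and from this one deduces $X_n(\omega)\to X(\omega)$ at every continuity point $\omega$ of $X$; since $X$ is monotone it has at most countably many discontinuities, so $X_n\to X$ for $\lambda$-a.e. $\omega$.

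For a general target, the separability of $\operatorname{supp}(\mu)$ is exactly what lets me mimic this by a cell decomposition. The key geometric step is: for each $\varepsilon>0$ I can cover a set of full $\mu$-measure by countably many disjoint Borel sets $\{B_j\}_j$ of diameter $<\varepsilon$ with $\mu(\partial B_j)=0$. This uses that around any point only countably many radii $r$ give $\mu(\partial B(x,r))>0$, so balls of ``continuity radii'' centred at a countable dense subset of $\operatorname{supp}(\mu)$ have null boundary; intersecting and disjointifying them yields the desired partition. Choosing a sequence $\varepsilon_k\downarrow0$ and refining, I obtain a nested sequence of such partitions $\mathcal{P}_1\prec\mathcal{P}_2\prec\cdots$ with mesh tending to $0$.

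The payoff of the null-boundary condition is that the portmanteau theorem upgrades weak convergence to $\mu_n(B)\to\mu(B)$ for every cell $B$ (and every finite union of cells) in each $\mathcal{P}_k$. I then transport the real-line allocation idea: on $[0,1]$ I partition into subintervals whose lengths are the masses of the cells of $\mathcal{P}_k$ --- under $\mu$ to define an approximation $X^{(k)}$, under $\mu_n$ to define $X_n^{(k)}$ --- arranged consistently with the nesting, and I let $X^{(k)}$ (resp. $X_n^{(k)}$) equal a fixed representative point of the corresponding cell on each subinterval. By construction the sequence $X^{(k)}$ is uniformly Cauchy in $\omega$ (mesh $\to0$) and its limit $X$ has law $\mu$; the same scheme produces $X_n$ of law $\mu_n$.

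The remaining, and genuinely delicate, point is to couple the two limits so that $X_n\to X$ almost surely rather than merely in law. Here I would run a diagonal argument: since $\mu_n(B)\to\mu(B)$ on the finitely many cells of each fixed $\mathcal{P}_k$, for $n$ large the interval allocation for $X_n^{(k)}$ differs from that for $X^{(k)}$ only on a set of $\omega$ of small Lebesgue measure; quantifying this and choosing $k=k(n)\to\infty$ slowly enough that the exceptional sets are summable lets Borel--Cantelli force $d(X_n(\omega),X(\omega))\to0$ for a.e. $\omega$, while preserving the exact marginals. The main obstacle is precisely this bookkeeping: arranging the nested interval allocations on $[0,1]$ so that the constructed variables have the exact prescribed laws $\mu_n$ and $\mu$ and, simultaneously, the mismatch sets shrink summably along the diagonal. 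The monotone one-dimensional case hides this difficulty because there the quantile coupling is canonical; in the general separable setting the coupling must be built by hand, and controlling it is the crux of the proof.
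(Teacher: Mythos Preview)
The paper does not actually prove this statement: it is quoted as a classical result, with the sentence ``The proofs of the following two classical results can be found in \cite{PraZab},\cite{GyoKry}'' immediately preceding it. There is therefore no paper proof to compare your proposal against.

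As for your sketch itself, the strategy you describe is the standard one (quantile coupling on $\mathbb{R}$, then nested $\mu$-continuity partitions of shrinking mesh in the general separable case, following Dudley's version of the argument). The one-dimensional part is complete and correct. In the general part, however, you explicitly stop short of the decisive step: you identify the ``bookkeeping'' --- arranging the nested interval allocations on $[0,1]$ so that the $X_n$ have the exact laws $\mu_n$ while the mismatch sets are summable along a diagonal --- as ``the crux of the proof'' and then do not carry it out. So what you have is a correct and well-motivated outline, not a proof; to complete it you would need to make the allocation explicit (e.g.\ order the cells, lay out subintervals of lengths $\mu(B_j)$ and $\mu_n(B_j)$ in the same order, and quantify the symmetric difference as $\sum_j |\mu_n(B_j)-\mu(B_j)|$), choose $k(n)$ so that these errors are summable, and invoke Borel--Cantelli.
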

Let us state the celebrated Gy\"ongy-Krylov result.
\begin{lemma}[Gy\"ongy-Krylov lemma]    \label{strong}
Let $\{X_n\}_{n\in \mathbb{N}}$ be a sequence of random variables with values in a Polish space $(E,d),$ endowed with the Borel $\sigma$-algebra. Then $X_{n}$ converges in probability to an $E$-valued random process, if and only if, for every pair of subsequences $\{X_{n_j}, X_{m_j}\}_{j \in \mathbb{N}},$ there exists a further subsequence that converges weakly to a random variable supported on the diagonal $\lbrace(x,y)\in E\times E: x=y\rbrace$. 
\end{lemma}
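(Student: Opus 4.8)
The plan is to prove the two implications separately, working in the space $L^{0}(\Xi;E)$ of (equivalence classes of) $E$-valued random variables equipped with the topology of convergence in probability. Since $E$ is Polish, this topology is metrized by
$d_{\mathbb{P}}(X,Y):=\mathbb{E}\big[g(X,Y)\big]$, where $g(x,y):=\min\{d(x,y),1\}$ is bounded and continuous on $E\times E$ and vanishes \emph{exactly} on the diagonal $\Delta:=\{(x,y)\in E\times E:x=y\}$ (the weight $d/(1+d)$ works equally well). The crucial structural fact I would invoke at the end is that $L^{0}(\Xi;E)$ is complete under $d_{\mathbb{P}}$ when $E$ is a complete separable metric space.

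\textbf{Forward implication.} This direction is immediate and requires no extraction. If $X_{n}\to X$ in probability, then every subsequence converges in probability to the same limit $X$; hence for any pair of subsequences $\{X_{n_{j}}\},\{X_{m_{j}}\}$ the pair $(X_{n_{j}},X_{m_{j}})\to(X,X)$ in probability, and therefore the joint laws converge weakly to the law of $(X,X)$. Since this law is carried by $\Delta$, the subsequence condition holds (indeed for the full sequence of pairs).

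\textbf{Reverse implication.} The key idea is to upgrade the subsequence hypothesis into the statement that $\{X_{n}\}$ is Cauchy in probability, and then to conclude by completeness. I would argue by contradiction: suppose $\{X_{n}\}$ is not Cauchy for $d_{\mathbb{P}}$. Then there exist $\eta>0$ and subsequences $n_{j},m_{j}\to\infty$ with $\mathbb{E}\big[g(X_{n_{j}},X_{m_{j}})\big]\ge\eta$ for all $j$. Applying the hypothesis to this particular pair of subsequences yields a further subsequence along which the joint law of $(X_{n_{j_{k}}},X_{m_{j_{k}}})$ converges weakly to a probability measure $\mu$ on $E\times E$ supported on $\Delta$. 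Because $g$ is bounded and continuous and $g\equiv 0$ on $\Delta$, weak convergence gives $\mathbb{E}\big[g(X_{n_{j_{k}}},X_{m_{j_{k}}})\big]\to\int_{E\times E}g\,d\mu=0$, contradicting the lower bound $\eta$. Hence $\{X_{n}\}$ is Cauchy in probability, and by completeness of $L^{0}(\Xi;E)$ it converges in probability to some $E$-valued limit $X$.

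\textbf{Main obstacle.} The delicate point is entirely in the reverse direction, namely the interplay between two a priori different modes of control: weak convergence of laws only sees \emph{bounded continuous} test functions, whereas convergence in probability is read off from the (unbounded) distance $d(X_{n},X_{m})$. The choice $g=\min\{d,\cdot\,1\}$ reconciles these—it is bounded enough to pass the weak limit yet still detects the distance and pins down the diagonal. The remaining ingredient to be handled carefully is the completeness of $L^{0}(\Xi;E)$ for Polish $E$, which is what turns ``Cauchy in probability'' into an actual limit; this may be quoted from \cite{PraZab,GyoKry} or established directly by extracting a rapidly converging subsequence and applying the Borel–Cantelli lemma to obtain an almost surely convergent subsequence whose limit is then the limit in probability of the whole sequence.
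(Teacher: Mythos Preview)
Your proof is correct and follows the standard argument for the Gy\"ongy--Krylov lemma. However, note that the paper does not actually prove this result: it is stated without proof and attributed to the original reference \cite{GyoKry} (the sentence ``The proofs of the following two classical results can be found in \cite{PraZab},\cite{GyoKry}'' precedes the block of cited results, and the lemma is introduced as ``the celebrated Gy\"ongy-Krylov result''). So there is no paper proof to compare against---you have supplied a self-contained argument where the paper simply cites the literature.
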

We conclude this subsection by recalling the following classical probability theory inequality: 
\begin{lemma}[Markov's inequality]\label{desigualdad}
Let $X$ be a nonnegative random variable and $A >0$. Then
$$\mathbb{P} (X > A)  \leq \frac{\mathbb{E} (X)}{A}.$$
\end{lemma}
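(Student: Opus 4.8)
The plan is to deduce the inequality from the most elementary ingredient available, namely monotonicity of the expectation applied to a single pointwise bound; no probabilistic machinery beyond the definitions of $\mathbb{E}$ and $\mathbb{P}$ is required. First I would dispose of the trivial case: if $\mathbb{E}(X) = \infty$ the right-hand side equals $+\infty$ and the inequality holds automatically, so I may assume $X$ is integrable.

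The key step is the pointwise estimate
\[ X \geq A\,\mathbf{1}_{\{X > A\}} \quad \text{pointwise on } \Xi. \]
This holds precisely because $X \geq 0$: on the event $\{X > A\}$ the right-hand side equals $A$ while $X > A$, and on its complement the right-hand side vanishes and is therefore bounded above by the nonnegative $X$. Taking expectations and using that $\mathbb{E}$ preserves inequalities between integrable random variables gives
\[ \mathbb{E}(X) \geq A\,\mathbb{E}\left(\mathbf{1}_{\{X > A\}}\right) = A\,\mathbb{P}(X > A), \]
where the final equality is simply the fact that the probability of an event equals the expectation of its indicator. Dividing through by $A > 0$ yields the claimed bound.

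There is no genuine obstacle here; the only points deserving a word of care are the measurability of the event $\{X > A\}$, which is immediate since $X$ is a random variable, and the treatment of the non-integrable case, which is dealt with at the outset. I would also remark that the argument extends verbatim when $\{X > A\}$ is replaced by $\{X \geq A\}$, the form in which the inequality is sometimes stated, since the pointwise bound $X \geq A\,\mathbf{1}_{\{X \geq A\}}$ holds for the same reason.
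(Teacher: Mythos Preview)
Your proof is correct and is in fact the standard textbook argument for Markov's inequality. The paper itself does not supply a proof of this lemma; it is merely recalled as a classical probability inequality, so there is nothing to compare against beyond noting that your argument is the canonical one.
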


\subsection{Statement of the main results}
Let us state here some fundamental definitions and the main theorems that we are going to prove in the following sections. 
\begin{definition}[Local solution] \label{localsol}
A local solution $(\omega,\theta) \in H^2(\mathbb{T}^{2},\mathbb{R}) \times H^3(\mathbb{T}^{2},\mathbb{R})$ to the Boussinesq equations (\ref{eq:vorticity:ito})-(\ref{eq:theta:ito}) is a pair of random variables $\omega: \mathbb{T}^2 \times \Xi \times [0,\tau] \rightarrow \mathbb{R},$ $\theta: \mathbb{T}^2 \times \Xi \times [0,\tau] \rightarrow \mathbb{R},$ with trajectories of class $C([0,\tau]; H^2(\mathbb{T}^{2},\mathbb{R})\times H^3(\mathbb{T}^{2},\mathbb{R}))$, together with a stopping time $\tau: \Xi \rightarrow [0, \infty]$ such that $\omega(t \wedge \tau),$ $\theta(t \wedge \tau)$ are adapted to $\lbrace\mathcal{F}_{t}\rbrace_{t\geq 0},$  and (\ref{eq:vorticity:ito})-(\ref{eq:theta:ito}) holds in the $L^2$ sense. This is
\begin{eqnarray*}
 \omega_{\tau'} - \omega_0  + \int_0^{\tau'} \mathcal{L}_{u} \omega \diff s +\displaystyle \sum_{i=1}^{\infty} \int_0^{\tau'} \mathcal{L}_{\xi_{i}}\omega  \diff B^{i}_{s}&=& \dfrac{1}{2}\displaystyle\sum_{i=1}^{\infty}\int_{0}^{\tau'}\mathcal{L}^{2}_{\xi_{i}} \omega \diff s+ \int_0^{\tau'} \partial_{x} \theta \diff s,  \\
 \theta_{\tau'} - \theta_0 +  \int_0^{\tau'} \mathcal{L}_{u} \theta \diff s + \displaystyle\sum_{i=1}^{\infty}  \int_0^{\tau'} \mathcal{L}_{\xi_{i}}\theta \diff B^{i}_{s}&=&  \dfrac{1}{2}\displaystyle\sum_{i=1}^{\infty}\int_{0}^{\tau'}\mathcal{L}^{2}_{\xi_{i}} \theta \diff s,
 \end{eqnarray*}
for finite stopping times $\tau' \leq \tau$. A pair $(\omega,\theta)\in L^{2}(\mathbb{T}^{2}\times[0,\tau])\times L^{2}(\mathbb{T}^{2}\times[0,\tau])$ is said to satisfy equations (\ref{eq:vorticity:ito})-(\ref{eq:theta:ito}) in the weak sense if
 \makeatletter
 \def\@eqnnum{{\normalsize \normalcolor (\theequation)}}
  \makeatother
 { \small	
\begin{eqnarray*}
 \langle \omega_{\tau'}, \phi \rangle_{L^2} - \langle \omega_0, \phi \rangle_{L^2} - \int_0^{\tau'} \langle  \omega,\mathcal{L}_{u} \phi \rangle_{L^2} \diff s- \displaystyle \sum_{i=1}^{\infty} \int_0^{\tau'} \langle \omega, \mathcal{L}_{\xi_{i}}\phi \rangle_{L^2} \diff  B^{i}_{s}&=& \dfrac{1}{2}\displaystyle\sum_{i=1}^{\infty}\int_{0}^{\tau'} \langle\omega, {\mathcal{L}}^2_{\xi_{i}}\phi\rangle_{L^2} \ \diff s-\int_0^{\tau'} \langle   \theta, \partial_{x} \phi \rangle_{L^2} \diff s,  \\
\langle \theta_{\tau'}, \phi \rangle_{L^2} - \langle \theta_0, \phi \rangle_{L^2}  - \int_0^{\tau'} \langle \theta,\mathcal{L}_{u}  \phi \rangle_{L^2}  \diff s - \displaystyle\sum_{i=1}^{\infty}  \int_0^{\tau'} \langle \theta, \mathcal{L}_{\xi_{i}}\phi \rangle_{L^2} \diff  B^{i}_{s}&=& \dfrac{1}{2} \displaystyle\sum_{i=1}^{\infty}\int_{0}^{\tau'} \langle\theta, {\mathcal{L}}^2_{\xi_{i}} \phi\rangle_{L^2} \ \diff s, \end{eqnarray*}}
 for all test functions $\phi \in C^\infty (\mathbb{T}^2,\mathbb{R}).$ 
\end{definition}

\begin{definition}[Maximal solution]\label{def:maximal}
A maximal solution of \eqref{eq:vorticity:ito}-\eqref{eq:theta:ito} is a stopping time $\tau_{max}:\Xi \to [0,\infty]$ and random variables $\omega: \mathbb{T}^2 \times \Xi \times [0,\tau_{max}) \rightarrow \mathbb{R}$, $\theta: \mathbb{T}^2 \times \Xi \times [0,\tau_{max}) \rightarrow \mathbb{R}$ such that: \\
\begin{itemize}[leftmargin=*]
\item $\mathbb{P} (\tau_{max} >0) = 1, \ \tau_{max} = lim_{n \rightarrow \infty} \tau_n,$ where $\tau_n$ is an increasing sequence of stopping times, i.e. $\tau_{n+1}\geq \tau_{n}$ $\mathbb{P}$ almost surely. \\
\item $(\tau_{n},\omega,\theta)$ is a local solution for every $n\in\mathbb{N}$. \\
\item If $(\tau',\omega',\theta')$ is another triplet satisfying the above conditions and $(\omega',\theta')=(\omega,\theta)$ on $[0,\tau'\wedge \tau_{max} )$, then $\tau'\leq \tau_{max}$ $\mathbb{P}$ almost surely. \\
\end{itemize}
\end{definition}
We are now ready to state the main results of this article:
\begin{theorem}\label{mainth}
Let $(\omega_{0},\theta_{0})\in H^{2}(\mathbb{T}^{2},\mathbb{R})\times H^{3}(\mathbb{T}^{2},\mathbb{R})$, then there exists a unique maximal solution $(\tau_{max},\omega,\theta)$ of the 2D stochastic Boussinesq equations  \eqref{eq:vorticity:ito}-\eqref{eq:theta:ito}. If $(\tau',\omega',\theta')$ is another maximal solution of \eqref{eq:vorticity:ito}-\eqref{eq:theta:ito}, then necessarily $\tau_{max}=\tau'$, $\omega=\omega',$ and $\theta=\theta'$ on $[0,\tau_{max})$. Moreover, either $\tau_{max}=\infty$ or $\displaystyle\lim\sup_{s\nearrow \tau_{max}}\left(||\omega(s)||_{H^{2}}+ ||\theta(s)||_{H^{3}}\right)= \infty$.
\end{theorem}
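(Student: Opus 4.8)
\emph{Overall strategy.} The plan is to prove Theorem \ref{mainth} in four stages, working with the abstract formulation \eqref{abstractformulation}: pathwise uniqueness of local solutions; global solvability of a hyper-regularised and truncated version of the system by a fixed-point argument; energy estimates in $H^2\times H^3$ uniform in the regularisation parameter, the stage in which Proposition \ref{Liecancellations} is indispensable; and a compactness/limiting argument together with a patching procedure that produces the maximal solution and the blow-up alternative. For uniqueness, I would take two local solutions $(\omega^1,\theta^1)$, $(\omega^2,\theta^2)$ with the same initial datum defined up to a common stopping time, introduce localising stopping times $\sigma_R$ at which $\|u^j\|_{W^{1,\infty}}+\|\theta^j\|_{W^{1,\infty}}$ first reaches $R$, and apply It\^o's formula to a low-order norm of the differences $\delta\omega=\omega^1-\omega^2$, $\delta\theta=\theta^1-\theta^2$ (say $\|\delta\omega\|_{L^2}^2+\|\delta\theta\|_{H^1}^2$, the extra derivative on $\theta$ being needed to absorb the coupling $\partial_x\delta\theta$ through the Biot--Savart bound \eqref{BiotIne}). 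The transport terms are controlled by the truncation and the Sobolev inequalities \eqref{Sob:ine1}--\eqref{Sob:ine2}, and the It\^o correction $\tfrac12\sum_i\mathcal{L}^2_{\xi_i}$ together with the quadratic variation $\sum_i\|\mathcal{L}_{\xi_i}\delta\omega\|_{L^2}^2$ cancels exactly by \eqref{eq:cancellation1}; taking expectations removes the martingale term and Gr\"onwall's inequality forces $\delta\omega=\delta\theta=0$ on $[0,\sigma_R]$, whence uniqueness on letting $R\to\infty$.

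\emph{Regularisation, global solutions, uniform estimates.} Next I would add a hyper-dissipative term $Af=-\nu(-\Delta)^m f$ ($m$ large enough that the semigroup smooths from $L^2$ up to $H^3$) to \eqref{abstractformulation} and multiply every nonlinearity by a smooth cutoff $\chi_R=\chi(\|\omega\|_{H^2}^2+\|\theta\|_{H^3}^2)$, with $\chi\equiv 1$ on $[0,R]$ and $\chi\equiv 0$ beyond $2R$, so that the truncated $B-G$, $L$ and $R_i$ are globally bounded and Lipschitz from $H^2\times H^3$ into $L^2\times L^2$. A Banach fixed-point argument in the space of $C([0,T];H^2\times H^3)$-valued adapted processes, using the analytic-semigroup estimates of Lemma \ref{estimation:semigroup} and the Biot--Savart bound \eqref{BiotIne}, then yields a unique global mild solution $(\omega^{\nu,R},\theta^{\nu,R})$, which is strong by parabolic bootstrapping. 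The crux is to bound these solutions in $H^2\times H^3$ uniformly in $\nu$: applying It\^o's formula to $\|\omega\|_{H^2}^2+\|\theta\|_{H^3}^2$, the hyper-dissipative term is discarded, the deterministic transport terms are handled by commutator estimates and \eqref{Sob:ine1}--\eqref{Sob:ine2}, the coupling is controlled since the $H^3$-norm of $\theta$ dominates the $H^2$-norm of $\partial_x\theta$, and the two dangerous stochastic contributions --- the quadratic variation $\sum_i\langle\Lambda^2\mathcal{L}_{\xi_i}\omega,\Lambda^2\mathcal{L}_{\xi_i}\omega\rangle$ and the It\^o correction $\sum_i\langle\Lambda^2\mathcal{L}^2_{\xi_i}\omega,\Lambda^2\omega\rangle$, each of which carries three derivatives of $\omega$ --- combine into a quantity bounded by $C\|\omega\|_{H^2}^2$ precisely because of \eqref{eq:cancellation2} (applied with $k=2$ for $\omega$ and $k=3$ for $\theta$). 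Taking suprema in time, using Burkholder--Davis--Gundy \eqref{BGD:ineq} on the martingale term and a stochastic Gr\"onwall argument then gives $\mathbb{E}\sup_{[0,T]}(\|\omega^{\nu,R}\|_{H^2}^2+\|\theta^{\nu,R}\|_{H^3}^2)\leq C(R,T)$ uniformly in $\nu$, along with the fractional-in-time estimate required by Lemma \ref{compactlemma}, read off from the mild formulation and Lemma \ref{estimation:semigroup}.

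\emph{Limit, uniqueness on the original basis, maximal solution.} With the uniform bounds in hand I would let $\nu\to 0$. Lemma \ref{compactlemma} gives tightness of the laws of $(\omega^{\nu,R},\theta^{\nu,R})$ in a space compactly embedded below the energy space, e.g.\ in $L^2([0,T];H^{2-\epsilon}\times H^{3-\epsilon})\cap C([0,T];H^{2-\epsilon}\times H^{3-\epsilon})$; Prokhorov's theorem \ref{prohorov} and Skorokhod's representation theorem \ref{rep} produce almost surely convergent copies on a new stochastic basis, on which one passes to the limit in the weak formulation of the truncated system to obtain a martingale solution with trajectories in $C([0,T];H^2\times H^3)$. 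Since the pathwise uniqueness argument applies verbatim to the truncated system, the Gy\"ongy--Krylov lemma \ref{strong} upgrades weak to probabilistic-strong convergence on the original basis, producing a strong truncated solution $(\omega^R,\theta^R)$. Finally, set $\tau_R=\inf\{t:\|\omega^R(t)\|_{H^2}^2+\|\theta^R(t)\|_{H^3}^2\geq R\}$; on $[0,\tau_R]$ the cutoff is inactive, so $(\omega^R,\theta^R)$ solves the genuine system \eqref{eq:vorticity:ito}--\eqref{eq:theta:ito}, and pathwise uniqueness makes the family consistent, so the solutions glue into one defined on $[0,\tau_{max})$ with $\tau_{max}=\lim_R\tau_R$. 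If $\mathbb{P}(\tau_{max}<\infty)>0$, then on that event $\limsup_{s\nearrow\tau_{max}}(\|\omega(s)\|_{H^2}+\|\theta(s)\|_{H^3})=\infty$, since otherwise the solution would remain bounded near $\tau_{max}$ and could be restarted past it, contradicting maximality; uniqueness of the maximal solution is immediate from the pathwise uniqueness of local solutions.

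\emph{Main obstacle.} The principal difficulty is the uniform-in-$\nu$ energy estimate in $H^2\times H^3$. For transport-type noise the quadratic variation and the It\^o--Stratonovich correction each contribute at one derivative above the energy level, so the estimate cannot close term by term; it closes only because these top-order contributions cancel, which is exactly the content of Proposition \ref{Liecancellations} (and, in full generality, Theorem \ref{generalcancellations}). A secondary technical point is checking that the fractional time-regularity and spatial bounds fit the hypotheses of the compact embedding Lemma \ref{compactlemma} and that the limit object still solves the equation; these are by now fairly standard once the a priori bounds are uniform.
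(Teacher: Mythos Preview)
Your proposal is correct and follows essentially the same architecture as the paper's proof: pathwise uniqueness via energy estimates on the differences with the cancellation \eqref{eq:cancellation1}, global solvability of a hyper-regularised truncated system by a mild-formulation fixed point, uniform $H^2\times H^3$ bounds via \eqref{eq:cancellation2}, and then Prokhorov/Skorokhod/Gy\"ongy--Krylov plus a patching of stopping times to build the maximal solution with the blow-up alternative. The only cosmetic differences are that the paper truncates with $\eta_r(\|\nabla u\|_{L^\infty})$ and $\eta_r(\|\nabla\theta\|_{L^\infty})$ on the transport terms (rather than your $\chi(\|\omega\|_{H^2}^2+\|\theta\|_{H^3}^2)$), runs the uniqueness argument at the $L^2\times L^2$ level, and obtains the fractional-in-time bound from the strong integral form in negative Sobolev spaces rather than from the mild formulation.
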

In this paper we also construct a blow-up criterion, which reads:
\begin{theorem}\label{mainth2}
Given $(\omega_{0},\theta_{0})\in H^{2}(\mathbb{T}^{2},\mathbb{R})\times H^{3}(\mathbb{T}^{2},\mathbb{R})$, if $\tau_{max}<\infty$, then
\[ \int_{0}^{\tau_{max}}  \norm{\nabla u(t)}_{L^{\infty}}+\norm{\nabla \theta(t)}_{L^{\infty}} \ \diff t = \infty, \quad a.s.\] 
\end{theorem}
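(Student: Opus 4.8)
\emph{Proof strategy.} The plan is to argue by contradiction, exploiting the blow-up alternative already contained in Theorem \ref{mainth}: on $\{\tau_{max}<\infty\}$ one necessarily has $\limsup_{t\nearrow\tau_{max}}\big(\norm{\omega(t)}_{H^{2}}+\norm{\theta(t)}_{H^{3}}\big)=\infty$. Writing $\Lambda(t):=\norm{\nabla u(t)}_{L^{\infty}}+\norm{\nabla\theta(t)}_{L^{\infty}}$ and $\Phi(t):=\norm{\omega(t)}_{H^{2}}^{2}+\norm{\theta(t)}_{H^{3}}^{2}$, it suffices to prove that the event
\[ A:=\Big\{\tau_{max}<\infty\ \text{ and }\ \int_{0}^{\tau_{max}}\Lambda(t)\,\diff t<\infty\Big\} \]
is $\mathbb{P}$-null. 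Decomposing $A=\bigcup_{M\in\mathbb{N}}A_{M}$ with $A_{M}:=A\cap\{\tau_{max}\le M\}\cap\{\int_{0}^{\tau_{max}}\Lambda\,\diff t\le M\}$, it is enough to show $\mathbb{P}(A_{M})=0$ for each fixed $M$, which I would do by proving that on $A_{M}$ the quantity $\Phi$ stays bounded up to $\tau_{max}$, contradicting the blow-up alternative.

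The core of the argument is the It\^o differential inequality
\[ \diff\Phi(t)\le C\big(1+\Lambda(t)\big)\Phi(t)\,\diff t+\diff M_{t},\qquad \diff\la M\ra_{t}\lesssim\Phi(t)^{2}\,\diff t, \]
which is essentially the a priori estimate underlying the construction of the maximal solution. To obtain it I would apply It\^o's formula to $\Phi$ and bound the resulting terms: the transport contributions are treated by integrating by parts (so that $\la u\cdot\nabla(\cdot),(\cdot)\ra$ drops by $\nabla\cdot u=0$) and estimating commutators — for $\omega$, using $[-\Delta,u\cdot\nabla]\omega=-(\Delta u_{j})\partial_{j}\omega-2(\nabla u_{j})\cdot\nabla\partial_{j}\omega$, the Biot--Savart bound \eqref{BiotIne}, the Gagliardo--Nirenberg inequality \eqref{Sob:ine3}, and the pointwise estimate $\norm{\omega}_{L^{\infty}}\le2\norm{\nabla u}_{L^{\infty}}$, one gets $\lesssim\norm{\nabla u}_{L^{\infty}}\norm{\omega}_{H^{2}}^{2}$, while for $\theta$ a Kato--Ponce commutator estimate together with $\norm{u}_{H^{3}}\lesssim\norm{\omega}_{H^{2}}$ (again \eqref{BiotIne}) yields $\lesssim(\norm{\nabla u}_{L^{\infty}}+\norm{\nabla\theta}_{L^{\infty}})\Phi$; the coupling term $\la(I-\Delta)\omega,(I-\Delta)\partial_{x}\theta\ra\lesssim\norm{\omega}_{H^{2}}\norm{\theta}_{H^{3}}\lesssim\Phi$ closes precisely because $\theta$ is measured one order higher than $\omega$; the It\^o correction terms $\la\,\cdot\,,\sum_{i}\mathcal{L}^{2}_{\xi_{i}}(\cdot)\ra$ and the quadratic-variation terms $\sum_{i}\norm{\mathcal{L}_{\xi_{i}}(\cdot)}^{2}$ arising in It\^o's formula combine, at the top two orders, via Proposition \ref{Liecancellations} in the operator form of Theorem \ref{generalcancellations} (with $\mathcal{P}=(I-\Delta)$, resp.\ $\Lambda^{3}$), leaving only $\lesssim\Phi$; and $M_{t}$ is a martingale with $\diff\la M\ra_{t}\lesssim\Phi^{2}\diff t$ by \eqref{assumption2}, so the Burkholder--Davis--Gundy inequality \eqref{BGD:ineq} bounds $\mathbb{E}\sup M_{t}$ by $\tfrac12\mathbb{E}\sup\Phi$ plus an absorbable remainder.

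Granting this inequality I would conclude by localization. Set $\sigma_{M}:=\inf\{t\ge0:\int_{0}^{t}\Lambda\,\diff s\ge M\}\wedge M$ and let $\tau_{n}\nearrow\tau_{max}$ be the announcing sequence from Definition \ref{def:maximal}; on $[0,\sigma_{M}\wedge\tau_{n}]$ everything is finite, $\int_{0}^{\sigma_{M}\wedge\tau_{n}}(1+\Lambda)\,\diff s\le2M$ surely, and the differential inequality holds. A stochastic Gronwall argument combined with \eqref{BGD:ineq} then gives $\mathbb{E}\big[\sup_{t\le\sigma_{M}\wedge\tau_{n}}\Phi(t)\big]\le C\,\Phi(0)\,e^{2CM}$ with $C$ independent of $n$; letting $n\to\infty$ and using monotone convergence yields $\sup_{t<\sigma_{M}\wedge\tau_{max}}\Phi(t)<\infty$ a.s. On $A_{M}$ one has $\sigma_{M}\ge\tau_{max}$, hence $\sup_{t<\tau_{max}}\Phi(t)<\infty$ a.s.\ on $A_{M}$, which contradicts $\limsup_{t\nearrow\tau_{max}}\Phi(t)=\infty$ unless $\mathbb{P}(A_{M})=0$. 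Since $A=\bigcup_{M}A_{M}$, we get $\mathbb{P}(A)=0$, i.e.\ the statement of Theorem \ref{mainth2}.

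The step I expect to be the main obstacle is the rigorous justification of the energy inequality at the natural regularity $H^{2}\times H^{3}$: since each $\mathcal{L}_{\xi_{i}}$ loses one derivative, $\norm{\omega}_{H^{2}}$ is not a priori a semimartingale in the naive sense, and it is exactly the near-cancellation of the top two orders in Proposition \ref{Liecancellations}/Theorem \ref{generalcancellations} that compensates this loss. The clean way to make this precise is to mollify the equations with a Friedrichs mollifier $J_{\epsilon}$, apply It\^o to $\norm{J_{\epsilon}\omega}_{H^{2}}^{2}+\norm{J_{\epsilon}\theta}_{H^{3}}^{2}$ (now genuinely smooth), invoke Theorem \ref{generalcancellations} for $J_{\epsilon}\omega,J_{\epsilon}\theta$, and control the commutators $[J_{\epsilon},\mathcal{L}_{u}]$, $[J_{\epsilon},\mathcal{L}_{\xi_{i}}]$ and $[J_{\epsilon},\mathcal{L}^{2}_{\xi_{i}}]$ uniformly in $\epsilon$ via Friedrichs/Kato--Ponce-type lemmas (using $\xi_{i}\in C^{4}$ and \eqref{assumption3}), so that each error term either vanishes as $\epsilon\to0$ or is of the absorbable form $\lesssim(1+\Lambda)\Phi$. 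Keeping all constants uniform in $\epsilon$, in $n$, and in the hyper-regularisation and truncation parameters used in the construction — so that the final Gronwall factor $e^{2CM}$ does not degenerate — is where the real effort lies; the remainder is the standard stochastic Gronwall bootstrap already used for Theorem \ref{mainth}.
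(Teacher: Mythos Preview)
Your proposal is essentially sound and reaches the same conclusion, but it takes a genuinely different route from the paper at the decisive step, and one of your claimed intermediate bounds is a bit optimistic.

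\textbf{Where the two arguments agree.} You correctly identify the core differential inequality $\diff\Phi\le C(1+\Lambda)\Phi\,\diff t+\diff M_{t}$ with $\diff\la M\ra_{t}\lesssim\Phi^{2}\,\diff t$, and your treatment of the transport, coupling, It\^o-correction and quadratic-variation terms matches the paper's term-by-term analysis in Section~\ref{blowup:crit}, including the use of Proposition~\ref{Liecancellations} to kill the top two orders.

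\textbf{Where they diverge.} The paper does \emph{not} run a stochastic Gronwall on $\Phi$. Instead it applies It\^o's formula to $\log\Phi$, obtaining $\diff\log\Phi\lesssim(1+\Lambda)\,\diff t+\diff\widetilde M_{t}$ with the crucial gain that the new martingale $\widetilde M_{t}=\sum_{i}\int_{0}^{t}\Phi^{-1}(\cdots)\,\diff B^{i}_{s}$ has $\diff\la\widetilde M\ra_{t}\lesssim\diff t$: the $\Phi$'s cancel, so the quadratic variation is bounded by $t$ uniformly, and Burkholder--Davis--Gundy gives $\mathbb{E}[\sup|\widetilde M_{s}|]\lesssim\sqrt{t}$ with no reference to $\Phi$. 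Taking expectations then yields $\mathbb{E}\big[\sup_{s\le\tau^{\infty}_{n}\wedge m}\log\Phi(s)\big]<\infty$ directly, with no circularity. This logarithm trick is the paper's main technical device and it bypasses exactly the difficulty you flag.

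\textbf{On your Gronwall step.} Your claimed bound $\mathbb{E}\big[\sup_{t\le\sigma_{M}\wedge\tau_{n}}\Phi(t)\big]\le C\,\Phi(0)\,e^{2CM}$ is not quite what falls out. After the pathwise Gronwall and BDG you get a term $Ce^{2CM}\,\mathbb{E}\big[(\int_{0}^{\cdot}\Phi^{2})^{1/2}\big]$, and the standard Young-inequality splitting leaves an ``absorbable remainder'' of the form $C_{\epsilon}e^{2CM}\,\mathbb{E}\big[\int_{0}^{\cdot}\Phi\big]$, which feeds back into a Gronwall in $t\mapsto\mathbb{E}[\sup_{s\le t}\Phi]$; this closes, but with a bound that is doubly exponential in $M$ rather than $e^{2CM}$. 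Alternatively one can invoke a Scheutzow-type stochastic Gronwall lemma for $p$-th moments with $p<1$, which gives $\mathbb{E}[(\sup\Phi)^{p}]<\infty$ cleanly. Either way the a.s.\ finiteness you need for the contradiction follows, so the argument is salvageable; just do not claim the sharp constant.

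\textbf{Justification of the It\^o formula.} You propose Friedrichs mollification of the actual solution; the paper instead works throughout with the hyper-regularised truncated solutions $(\omega^{\nu}_{r},\theta^{\nu}_{r})$ from Section~\ref{globalregular}, where the extra regularity is already available, and passes to the limit $\nu\to0$, $r\to\infty$ at the end via Fatou. Both are legitimate; the paper's route has the advantage that the needed regularity is built into the construction, so no new commutator estimates with $J_{\epsilon}$ are required.

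In short: your strategy works, but the paper's logarithm device is the cleaner way to break the $\Phi$-vs-$\la M\ra$ circularity, and it is worth knowing.
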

\begin{remark}
Theorem \ref{mainth} answers the question of the physical validity of the stochastic Boussinesq equations and provides another example which corroborates the method for introducing stochasticity presented in \cite{Principal} as physical. Theorem \ref{mainth2} could be used to check whether data from a given numerical simulation shows blow-up in finite time.
\end{remark}
\begin{remark}
The Sobolev spaces in Theorem \ref{mainth} are not sharp. One could actually prove the same local existence and uniqueness result in the Sobolev spaces $H^{s-1}(\mathbb{T}^{2},\mathbb{R})\times H^{s}(\mathbb{T}^{2},\mathbb{R})$ for $s>2$. One of the knotty and technical points for extending this result to fractional indexes hinges on the Lie derivative cancellation inequalities stated in Proposition \ref{Liecancellations}. However, as shown in Appendix \ref{appendixA}, inequality \eqref{eq:cancellation2} is satisfied for a wide class of differential operators, which in particular covers the case of fractional differential operators.  
\end{remark}

Finally, we prove a result which is tremendously useful when deriving $H^s$ estimates for a general type of equations.
\begin{theorem}\label{generalcancellations2}
Let $\mathcal{Q}$ be a linear operator of first order with smooth bounded coefficients. Then for $f\in H^{2}(\mathbb{T}^{2},\mathbb{R})$ we have 
\begin{equation}\label{eq:cancellationgeneral1}
\langle \mathcal{Q}^2 f, f \rangle_{L^2} +  \langle \mathcal{Q} f, \mathcal{Q} f \rangle_{L^2} \lesssim ||f||_{L^2}^2.
\end{equation}
Moreover, if $f\in H^{2+k}(\mathbb{T}^{2},\mathbb{R})$, and $\mathcal{P}$ is a pseudodifferential operator of order $k,$
\begin{equation}\label{eq:cancellationgeneral2}
\langle \mathcal{P} \mathcal{Q}^2 f, \mathcal{P} f \rangle_{L^2} +  \langle \mathcal{P} \mathcal{Q} f, \mathcal{P} \mathcal{Q} f \rangle_{L^2} \lesssim ||f||_{H^k}^2, 
\end{equation}
for every $k\in[1,\infty)$.
\end{theorem}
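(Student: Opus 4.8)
The plan is to reduce both inequalities to a commutator computation, exploiting the fact that the "bad" top-order term in $\langle \mathcal{Q}^2 f, f\rangle_{L^2}$ is exactly cancelled by $\langle \mathcal{Q} f, \mathcal{Q} f\rangle_{L^2}$ up to lower-order remainders. Write $\mathcal{Q} = \sum_j a_j(x)\partial_j + b(x)$ with $a_j, b \in C^\infty_b(\mathbb{T}^2)$, and let $\mathcal{Q}^*$ denote the formal $L^2$-adjoint, which is again a first-order operator with smooth bounded coefficients, namely $\mathcal{Q}^* = -\sum_j \partial_j(a_j \cdot) + b = -\sum_j a_j \partial_j + (b - \sum_j \partial_j a_j)$. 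Then $\langle \mathcal{Q}^2 f, f\rangle_{L^2} = \langle \mathcal{Q} f, \mathcal{Q}^* f\rangle_{L^2}$, so
\begin{equation*}
\langle \mathcal{Q}^2 f, f\rangle_{L^2} + \langle \mathcal{Q} f, \mathcal{Q} f\rangle_{L^2} = \langle \mathcal{Q} f, (\mathcal{Q} + \mathcal{Q}^*) f\rangle_{L^2}.
\end{equation*}
The crucial observation is that $\mathcal{Q} + \mathcal{Q}^* = 2b - \sum_j \partial_j a_j$ is an operator of order \emph{zero} with bounded coefficients. Hence the right-hand side is bounded by $\|\mathcal{Q} f\|_{L^2}\,\|(\mathcal{Q}+\mathcal{Q}^*)f\|_{L^2} \lesssim \|f\|_{H^1}\|f\|_{L^2}$, which is not yet $\|f\|_{L^2}^2$. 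To gain the extra derivative back, I would integrate by parts once more inside $\langle \mathcal{Q} f, cf\rangle_{L^2}$ with $c := \mathcal{Q}+\mathcal{Q}^*$: since $c$ is a multiplication operator, $\langle \mathcal{Q} f, cf\rangle_{L^2} = \langle \sum_j a_j \partial_j f, cf\rangle + \langle bf, cf\rangle$, and in the first term one integrates the $\partial_j$ by parts onto $a_j c f$, producing only terms with at most one derivative on a smooth coefficient times $f^2$, i.e. quantities bounded by $\|f\|_{L^2}^2$. This establishes \eqref{eq:cancellationgeneral1}.

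For \eqref{eq:cancellationgeneral2} the same mechanism applies after conjugating by $\mathcal{P}$. Write
\begin{equation*}
\langle \mathcal{P}\mathcal{Q}^2 f, \mathcal{P} f\rangle_{L^2} + \langle \mathcal{P}\mathcal{Q} f, \mathcal{P}\mathcal{Q} f\rangle_{L^2}
= \langle \mathcal{P}\mathcal{Q}^2 f, \mathcal{P} f\rangle_{L^2} + \langle \mathcal{P}\mathcal{Q} f, \mathcal{Q}\mathcal{P} f\rangle_{L^2} + \langle \mathcal{P}\mathcal{Q} f, [\mathcal{P},\mathcal{Q}] f\rangle_{L^2}.
\end{equation*}
The commutator $[\mathcal{P},\mathcal{Q}]$ of a pseudodifferential operator of order $k$ with a first-order differential operator is again pseudodifferential of order $k$ (the orders add and then drop by one: $k+1-1=k$), so $\|[\mathcal{P},\mathcal{Q}]f\|_{L^2} \lesssim \|f\|_{H^k}$; combined with $\|\mathcal{P}\mathcal{Q} f\|_{L^2} \lesssim \|f\|_{H^{k+1}}$ this last term is only $O(\|f\|_{H^{k+1}}\|f\|_{H^k})$, which is again half a derivative too strong, and must be handled by one further integration by parts exactly as above — moving a derivative off $\mathcal{Q} f$ inside the pairing — or, more cleanly, by first symmetrising: write $\langle \mathcal{P}\mathcal{Q} f,[\mathcal{P},\mathcal{Q}]f\rangle = \langle \mathcal{Q} f, \mathcal{P}^*[\mathcal{P},\mathcal{Q}]f\rangle$ and note $\mathcal{P}^*[\mathcal{P},\mathcal{Q}]$ has order $2k$, $\mathcal{Q}$ has order $1$, total $2k+1$, too much; the correct bookkeeping is to absorb one $\mathcal{P}$ from $\mathcal{P}\mathcal{Q}$ into the commutator on the other side so that what is tested against $\mathcal{Q} f$ has order $2k-1$ and hence pairs against $\|f\|_{H^k}$ with a net of $2k$ derivatives split evenly. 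Meanwhile the first two terms combine into $\langle \mathcal{P}\mathcal{Q} f, (\mathcal{Q}+\mathcal{Q}^{*,k})\mathcal{P} f\rangle$ where $\mathcal{Q}^{*,k}$ is the adjoint of $\mathcal{Q}$ relative to the pairing — modulo a further commutator $[\mathcal{Q}^*,\mathcal{P}]$ of order $k$ — and $\mathcal{Q}+\mathcal{Q}^*$ is order zero, so this piece is $\lesssim \|f\|_{H^{k}}^2$ after the same one-more-integration-by-parts trick applied with $\mathcal{P}f$ in place of $f$.

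The main obstacle, and where the argument requires genuine care rather than bookkeeping, is the half-derivative gap: in every pairing the naive Cauchy--Schwarz estimate overshoots by $\|f\|_{H^{1/2}}$ worth of regularity, and closing it rests on the structural identity that $\mathcal{Q}+\mathcal{Q}^*$ is a multiplication operator together with one final integration by parts that is only legitimate because the coefficients are smooth and bounded (so that differentiating them costs nothing). Organising this so that, at each stage, the derivative count is \emph{exactly} $2k$ split as $k+k$ — never $k+1$ and $k-1$ — is the delicate part; I would phrase it abstractly as a lemma: for any operator $\mathcal{R}$ of order $m$ with smooth bounded symbol and any $g \in H^{m}$, $\langle \mathcal{D} g, \mathcal{R} g\rangle_{L^2} \lesssim \|g\|_{H^{m}}^2$ whenever $\mathcal{D}$ is first-order and $\mathcal{D}+\mathcal{D}^*$ has order zero, proved by the integration-by-parts step above, and then apply it with $\mathcal{D}=\mathcal{Q}$, $\mathcal{R} = \mathcal{P}^*\mathcal{P}\mathcal{Q} + (\text{commutator corrections})$, $g=f$, and $m=k$. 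Sobolev embedding \eqref{Sob:ine2} plays no role; the only inputs are the calculus of pseudodifferential operators on $\mathbb{T}^2$ (composition and commutator order rules, boundedness $H^{s+k}\to H^s$) and integration by parts.
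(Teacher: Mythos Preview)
Your argument for \eqref{eq:cancellationgeneral1} is correct and is exactly what the paper does: write $\mathcal{Q}^\ast=-\mathcal{Q}+E$ with $E$ a zero-order multiplication operator, reduce the sum to $\langle \mathcal{Q}f,Ef\rangle_{L^2}$, and then integrate by parts once more (equivalently, use $[\mathcal{Q},E]$ of order zero) to land on $\|f\|_{L^2}^2$.

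For \eqref{eq:cancellationgeneral2} there is a genuine gap. After your decomposition the sum equals
\[
\langle \mathcal{P}\mathcal{Q}f,\,E\mathcal{P}f\rangle_{L^2}
\;+\;\langle \mathcal{P}\mathcal{Q}f,\,T_1 f\rangle_{L^2}
\;+\;\langle T_1\mathcal{Q}f,\,\mathcal{P}f\rangle_{L^2},
\qquad T_1:=[\mathcal{P},\mathcal{Q}].
\]
The first term is fine via the part-one trick with $\mathcal{P}f$ in place of $f$, as you say. The issue is the pair of cross terms. Each one, taken alone, has order $k+1$ on one side and $k$ on the other; ``moving a derivative off $\mathcal{Q}f$'' (i.e.\ shifting $\mathcal{Q}$ across the pairing via $\mathcal{Q}^\ast=-\mathcal{Q}+E$) simply reproduces a term of the same shape with the roles swapped. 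What actually closes the count is that these two terms \emph{together} combine, after one more commutation, into
\[
\langle T_2 f,\mathcal{P}f\rangle_{L^2}+\langle T_1 f,T_1 f\rangle_{L^2}+\langle T_1 f,E\mathcal{P}f\rangle_{L^2},
\qquad T_2:=[T_1,\mathcal{Q}]=[[\mathcal{P},\mathcal{Q}],\mathcal{Q}],
\]
and it is the \emph{double} commutator $T_2$ (still of order $k$) that makes every pairing genuinely $k$-against-$k$. Your proposal never introduces $T_2$, and your abstract lemma does not rescue the situation: with $m=k$ the operator you want to plug in for $\mathcal{R}$ (namely $\mathcal{P}^\ast\mathcal{P}\mathcal{Q}$ plus commutator corrections) has order $2k$ or $2k+1$, not $k$, so the hypothesis fails; and if one weakens the hypothesis to allow $\mathcal{R}$ of order $2k$, the conclusion $\lesssim\|g\|_{H^k}^2$ is false in general. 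The paper's proof is precisely this two-step commutator bookkeeping ($T_1$, then $T_2$), carried out explicitly.
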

\begin{remark}
Theorem \ref{generalcancellations2} turns out to be fundamental when performing a priori estimates in $H^{s}$ spaces for similar problems where the noise is given by
$$\sum_{i=1}^\infty \mathcal{Q}_i (u) \diff B_t^i,$$
where $Q_i,$ $i \in \mathbb{N},$ represent linear differential operators of first order, and the stochastic integral is in the Stratonovich sense. We comment further on this in the appendix.
\end{remark}
\begin{remark}[Paper notation]
We mention some aspects regarding the notation we employ along the article. Normally, we will denote the Sobolev $L^2-$based spaces by 
$H^s$(domain, target space). However, we will sometimes omit the domain and target space and just write $H^s,$ when these are clear from the context. We write $\diff \ $  to indicate an integrating differential in a domain, and we also use the notation $\diff f$ to denote the exterior differential of a form or a function, hoping it causes no confusion. $a\lesssim b$ means there exists $C$ such that $a\leq Cb$, where $C$ is a positive universal constant that may depend on fixed parameters, constant quantities, and the domain itself. Note also that this constant might differ from line to line. It is also important to remind that the condition ``almost surely" is not always indicated, since in some cases it is obvious from the context.
\end{remark}
\newpage
\section{Proofs of the main statements}\label{proofs:main:results}
\subsection{Local uniqueness}
To show the local uniqueness of solutions, we argue by contradiction. We will prove that any two different local solutions of the 2D stochastic Boussinesq equations (\ref{eq:vorticity:ito})-(\ref{eq:theta:ito}) defined up to a certain stopping time must be equal (almost surely). 
\begin{proposition}\label{prop:uniqueness}
Let $\tau$ be a stopping time and $\omega_1,\omega_2:\mathbb{T}^{2}\times \Xi \times [ 0,\tau] \to \mathbb{R}$, $\theta_1,\theta_2:\mathbb{T}^{2}\times \Xi\times [ 0,\tau] \to \mathbb{R}$ two solutions of \eqref{eq:vorticity:ito}-\eqref{eq:theta:ito} with the same initial data $(\omega_{0},\theta_{0})$  and continuous paths of class $C\left( [0,\tau]; H^{2}(\mathbb{T}^{2},\mathbb{R})\times H^{3}(\mathbb{T}^{2},\mathbb{R})\right)$. Then $\omega_{1}=\omega_{2}$ and $\theta_{1}=\theta_{2}$ on $[0,\tau]$.
\end{proposition}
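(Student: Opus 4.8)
The plan is the classical energy--contradiction argument for the difference of two solutions, with the twist that the top-order stochastic terms are controlled through the Lie-derivative cancellations of Proposition~\ref{Liecancellations}. First I would introduce the difference variables $\bar\omega:=\omega_1-\omega_2$, $\bar\theta:=\theta_1-\theta_2$ and, through the Biot--Savart law, $\bar u:=u_1-u_2=\nabla^{\perp}\Delta^{-1}\bar\omega$. Subtracting the two copies of \eqref{eq:vorticity:ito}--\eqref{eq:theta:ito} in It\^o form and using bilinearity of the transport term, $\mathcal L_{u_1}\omega_1-\mathcal L_{u_2}\omega_2=u_1\cdot\nabla\bar\omega+\bar u\cdot\nabla\omega_2$, one obtains the linear system
\begin{align*}
\diff\bar\omega+(u_1\cdot\nabla\bar\omega+\bar u\cdot\nabla\omega_2)\diff t+\sum_{i=1}^{\infty}\mathcal L_{\xi_i}\bar\omega\,\diff B^i_t&=\tfrac12\sum_{i=1}^{\infty}\mathcal L^2_{\xi_i}\bar\omega\,\diff t+\partial_x\bar\theta\,\diff t,\\
\diff\bar\theta+(u_1\cdot\nabla\bar\theta+\bar u\cdot\nabla\theta_2)\diff t+\sum_{i=1}^{\infty}\mathcal L_{\xi_i}\bar\theta\,\diff B^i_t&=\tfrac12\sum_{i=1}^{\infty}\mathcal L^2_{\xi_i}\bar\theta\,\diff t,
\end{align*}
with $\bar\omega(0)=\bar\theta(0)=0$. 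Since the coefficients of this system involve $\|\omega_j\|_{H^2}$ and $\|\theta_j\|_{H^3}$, which are finite only almost surely, I would localise: for $R>0$ set $\tau_R:=\tau\wedge\inf\{t:\ \|\omega_1(t)\|_{H^2}+\|\omega_2(t)\|_{H^2}+\|\theta_1(t)\|_{H^3}+\|\theta_2(t)\|_{H^3}\ge R\}$. By continuity of the paths in $H^2\times H^3$ we have $\tau_R\nearrow\tau$ a.s., so it suffices to prove $\bar\omega\equiv\bar\theta\equiv0$ on $[0,\tau_R]$ for every $R$.

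The core of the proof is an estimate for $\Phi(t):=\|\bar\omega(t)\|_{L^2}^2+\|\bar\theta(t)\|_{H^1}^2$, obtained by applying It\^o's formula (in the appropriate Hilbert-space/variational sense) to the $\bar\omega$-equation in $L^2$ and to the $\partial_j$-differentiated $\bar\theta$-equation. The choice of regularities is dictated by the coupling: the term $\partial_x\bar\theta$ is controlled in $L^2$ by $\|\bar\theta\|_{H^1}$, the reciprocal term $\bar u\cdot\nabla\theta_2$ is controlled in $H^1$ by $\|\bar u\|_{H^1}\|\nabla\theta_2\|_{H^2}\lesssim R\,\|\bar\omega\|_{L^2}$ using \eqref{BiotIne} and the 2D product estimate, and $\bar u\cdot\nabla\omega_2$ is controlled in $L^2$ via \eqref{Sob:ine1} by $\|\bar u\|_{L^4}\|\nabla\omega_2\|_{L^4}\lesssim R\,\|\bar\omega\|_{L^2}$. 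The genuinely top-order transport contributions $\langle u_1\cdot\nabla\bar\omega,\bar\omega\rangle_{L^2}$ and $\sum_j\langle u_1\cdot\nabla\partial_j\bar\theta,\partial_j\bar\theta\rangle_{L^2}$ vanish upon integration by parts because $\nabla\cdot u_1=0$, and the residual commutator pieces are bounded by $\|\nabla u_1\|_{L^\infty}\,\Phi\lesssim R\,\Phi$ (with $\|\nabla u_1\|_{L^\infty}\lesssim\|\omega_1\|_{H^2}\le R$ via \eqref{BiotIne} and \eqref{Sob:ine2}).

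The delicate point is the stochastic part. For the $\bar\omega$-component the martingale increment $-2\sum_i\langle\bar\omega,\mathcal L_{\xi_i}\bar\omega\rangle_{L^2}\diff B^i_t$ vanishes identically, because $\mathcal L_{\xi_i}$ is skew-adjoint on $L^2$ for divergence-free $\xi_i$; and the remaining It\^o-drift and quadratic-variation terms, $\sum_i\bigl(\langle\mathcal L^2_{\xi_i}\bar\omega,\bar\omega\rangle+\langle\mathcal L_{\xi_i}\bar\omega,\mathcal L_{\xi_i}\bar\omega\rangle\bigr)$, cancel \emph{exactly} by \eqref{eq:cancellation1}. For the $\bar\theta$-component, measured in $H^1$, after differentiating by $\partial_j$ and summing (using $\sum_j\|\partial_j g\|_{L^2}^2=\|\Lambda g\|_{L^2}^2$) the It\^o correction $\tfrac12\sum_i\mathcal L^2_{\xi_i}$ and the quadratic variation combine into $\sum_i\bigl(\langle\Lambda\mathcal L^2_{\xi_i}\bar\theta,\Lambda\bar\theta\rangle+\langle\Lambda\mathcal L_{\xi_i}\bar\theta,\Lambda\mathcal L_{\xi_i}\bar\theta\rangle\bigr)$, which is $\lesssim\|\bar\theta\|_{H^1}^2$ by \eqref{eq:cancellation2} with $k=1$; the genuine martingale part is handled by Burkholder--Davis--Gundy \eqref{BGD:ineq} together with the square-summability \eqref{assumption3}, then absorbed into $\tfrac12\mathbb E[\sup\Phi]$ plus a time integral of $\mathbb E[\Phi]$. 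This is exactly the step I expect to be the main obstacle: without these cancellations the quadratic-variation and correction terms would each cost a full extra derivative on $\bar\omega$ or $\bar\theta$, which is precisely the control we do not have, and organising the expansion so that only lower-order commutators survive after invoking Proposition~\ref{Liecancellations} is the heart of the matter.

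Collecting all the bounds gives $\mathbb E\bigl[\sup_{s\le t}\Phi(s\wedge\tau_R)\bigr]\le C_R\int_0^t\mathbb E\bigl[\sup_{s'\le s}\Phi(s'\wedge\tau_R)\bigr]\diff s$ for every $t$, so Gr\"onwall's inequality and $\Phi(0)=0$ force $\Phi\equiv0$ on $[0,\tau_R]$, hence $\omega_1=\omega_2$ and $\theta_1=\theta_2$ there. Letting $R\to\infty$ and using $\tau_R\nearrow\tau$ a.s.\ yields $\omega_1=\omega_2$ and $\theta_1=\theta_2$ on $[0,\tau]$ almost surely. A secondary, purely technical, obstacle is the rigorous justification of It\^o's formula for $\|\bar\theta\|_{H^1}^2$ at this regularity, which is dealt with either by the variational (Gelfand-triple $H^1\subset L^2\subset H^{-1}$) It\^o formula applied to $\partial_j\bar\theta$, or by a mollification argument followed by passage to the limit.
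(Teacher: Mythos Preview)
Your argument is correct and shares the overall template with the paper --- subtract, apply It\^o's formula to a quadratic energy of the difference, control the drift by the energy, conclude by Gr\"onwall --- but the two proofs differ in the choice of energy and in how the stochastic terms are dispatched. The paper works at the pure $L^2$ level, with $\|\tilde\omega\|_{L^2}^2+\|\tilde\theta\|_{L^2}^2$: there the martingale increments $\langle\mathcal L_{\xi_i}\tilde\omega,\tilde\omega\rangle_{L^2}$ and $\langle\mathcal L_{\xi_i}\tilde\theta,\tilde\theta\rangle_{L^2}$ vanish identically by skew-adjointness, and the It\^o correction cancels the quadratic variation \emph{exactly} via \eqref{eq:cancellation1}, so no BDG is needed --- one simply multiplies by an exponential integrating factor and takes expectation. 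Your choice of $\|\bar\theta\|_{H^1}^2$ instead produces a genuine nonvanishing martingale (the commutator $[\Lambda,\mathcal L_{\xi_i}]$ survives), hence your appeal to \eqref{eq:cancellation2} with $k=1$ and the BDG/absorption routine; this is heavier machinery on the stochastic side. What your choice buys is a clean closure of the coupling: $|\langle\partial_x\bar\theta,\bar\omega\rangle_{L^2}|\le\|\bar\theta\|_{H^1}\|\bar\omega\|_{L^2}\le\Phi$, whereas the paper's estimate for the same term, $|\langle\partial_x\tilde\theta,\tilde\omega\rangle|\lesssim(\|\nabla\theta_1\|_{L^2}+\|\nabla\theta_2\|_{L^2})\|\tilde\omega\|_{L^2}$, is only linear in the difference and does not feed directly into a homogeneous Gr\"onwall inequality as written. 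Your explicit localisation via $\tau_R$ (to make the stochastic integrals true martingales before taking expectation) is also a point the paper leaves implicit.
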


\begin{proof}[Proof of Proposition \ref{prop:uniqueness}]
We know that
\begin{eqnarray*}
\diff \omega_{j} + \mathcal{L}_{u_{j}} \omega_{j} \diff t+\displaystyle\sum_{i=1}^{\infty}  \mathcal{L}_{\xi_{i}} \omega_{j} \diff B^{i}_{t}&=& \frac{1}{2} \displaystyle\sum_{i=1}^{\infty}  \mathcal{L}^{2}_{\xi_{i}}\omega_{j} \diff t+\partial_{x} \theta_{j} \diff t,  \\
 \diff \theta_{j} + \mathcal{L}_{u_j} \theta_{j} \diff t + \displaystyle\sum_{i=1}^{\infty}  \mathcal{L}_{\xi_{i}}\theta_{j} \diff B^{i}_{t}&=& \frac{1}{2} \displaystyle\sum_{i=1}^{\infty}  \mathcal{L}^{2}_{\xi_{i}}\theta_{j} \diff t,
\end{eqnarray*}
for $j=1,2$. Defining the differences $\tilde{\omega} = \omega_1-\omega_2,$ $\tilde{u} = u_1-u_2,$ and $\tilde{\theta} = \theta_1 - \theta_2,$ we infer that
\begin{eqnarray*}
\diff \tilde{\omega} + \mathcal{L}_{u_{1}} \tilde{\omega} \diff t+\mathcal{L}_{\tilde{u}}\omega_{2} \diff t+\displaystyle\sum_{i=1}^{\infty}  \mathcal{L}_{\xi_{i}} \tilde{\omega} \diff B^{i}_{t}&=& \frac{1}{2} \displaystyle\sum_{i=1}^{\infty}  \mathcal{L}^{2}_{\xi_{i}}\tilde{\omega} \diff t+\partial_{x} \tilde{\theta} \diff t,  \\
 \diff \tilde{\theta} + \mathcal{L}_{u_{1}} \tilde{\theta} \diff t+ \mathcal{L}_{\tilde{u}} \theta_2 \diff t + \displaystyle\sum_{i=1}^{\infty}  \mathcal{L}_{\xi_{i}}\tilde{\theta} \diff B^{i}_{t}&=& \frac{1}{2} \displaystyle\sum_{i=1}^{\infty}  \mathcal{L}^{2}_{\xi_{i}}\tilde{\theta} \diff t.
\end{eqnarray*}
Therefore, we can write (upon using It\^o's lemma for $f(x)=x^{2}$) that
\begin{eqnarray*}
\frac{1}{2} \diff \ \langle \tilde{\omega}, \tilde{\omega} \rangle_{L^2}&+& \langle \mathcal{L}_{u_{1}} \tilde{\omega},\tilde{\omega}\rangle_{L^{2}} \diff t + \langle \mathcal{L}_{\tilde{u}} \omega_{2},\tilde{\omega}\rangle_{L^{2}} \diff t + \displaystyle\sum_{i=1}^{\infty} \langle  \mathcal{L}_{\xi_{i}} \tilde{\omega}, \tilde{\omega} \rangle_{L^{2}}\diff B^{i}_{t} \\
&=& \frac{1}{2} \displaystyle\sum_{i=1}^{\infty} \langle \mathcal{L}^2_{\xi_i} \tilde{\omega}, \tilde{\omega} \rangle_{L^2} \diff t + \frac{1}{2} \displaystyle\sum_{i=1}^{\infty}  \langle \mathcal{L}_{\xi_i} \tilde{\omega}, \mathcal{L}_{\xi_i} \tilde{\omega} \rangle_{L^2} \diff t+ \langle \partial_{x} \tilde{\theta}, \tilde{\omega} \rangle_{L^2} \diff t,
\end{eqnarray*}
and
\begin{eqnarray*}
\frac{1}{2} \diff \ \langle \tilde{\theta}, \tilde{\theta} \rangle_{L^2}&+& \langle \mathcal{L}_{u_{1}} \tilde{\theta},\tilde{\theta}\rangle_{L^{2}} \diff t + \langle \mathcal{L}_{\tilde{u}} \theta_{2},\tilde{\theta}\rangle_{L^{2}} \diff t + \displaystyle\sum_{i=1}^{\infty} \langle  \mathcal{L}_{\xi_{i}} \tilde{\theta}, \tilde{\theta} \rangle_{L^{2}} \diff  B^{i}_{t} \\
&=& \frac{1}{2} \displaystyle\sum_{i=1}^{\infty} \langle \mathcal{L}^2_{\xi_i} \tilde{\theta}, \tilde{\theta} \rangle_{L^2} \diff t + \frac{1}{2} \displaystyle\sum_{i=1}^{\infty}  \langle \mathcal{L}_{\xi_i} \tilde{\theta}, \mathcal{L}_{\xi_i} \tilde{\theta} \rangle_{L^2} \diff t.
\end{eqnarray*}
Now, one can check that for scalar functions, $\mathcal{L}_{\xi} = - \mathcal{L}^*_{\xi}$ (see Proposition \ref{Liecancellations}). Hence we obtain
\[\frac{1}{2}\displaystyle\sum_{i=1}^{\infty} \langle \mathcal{L}^2_{\xi_i} \tilde{\omega}, \tilde{\omega} \rangle_{L^2} \diff t + \frac{1}{2}\displaystyle\sum_{i=1}^{\infty} \langle \mathcal{L}_{\xi_i} \tilde{\omega}, \mathcal{L}_{\xi_i} \tilde{\omega} \rangle_{L^2} \diff t = 0\]
and
\[\frac{1}{2} \displaystyle\sum_{i=1}^{\infty} \langle \mathcal{L}^2_{\xi_i} \tilde{\theta}, \tilde{\theta} \rangle_{L^2} \diff t + \frac{1}{2}\displaystyle\sum_{i=1}^{\infty} \langle \mathcal{L}_{\xi_i} \tilde{\theta}, \mathcal{L}_{\xi_i} \tilde{\theta} \rangle_{L^2} \diff t = 0. \]
We can estimate the nonlinear terms of each equation as follows
\[ \abs{\langle \mathcal{L}_{\tilde{u}} \omega_{2},\tilde{\omega}\rangle_{L^{2}} }=\left| \langle\tilde{u}\cdot \nabla\omega_{2},\tilde{\omega} \rangle_{L^2}\right| \lesssim \norm{\tilde{u}}_{L^4}\norm{\nabla\omega_{2}}_{L^4}\norm{\tilde{\omega}}_{L^2} \leq C \norm{\omega_{2}}_{H^{2}} \norm{\tilde{\omega}}^{2}_{L^{2}},
\]
where we have used the Gagliardo-Nirenberg inequality (\ref{Sob:ine1}) and the Biot-Savart property (\ref{BiotIne}). In a similar manner, we can estimate
\[| \langle \mathcal{L}_{\tilde{u}} \theta_{2},\tilde{\theta}\rangle_{L^{2}} |=|\langle\tilde{u}\cdot \nabla\theta_{2},\tilde{\theta} \rangle_{L^2}| \lesssim ||\nabla\theta_{2}||_{L^\infty} ||\tilde{u}||_{L^2} ||\tilde{\theta}||_{L^2} \lesssim ||\theta_{2}||_{H^{3}} (||\tilde{\omega}||^{2}_{L^2} + ||\tilde{\theta}||^{2}_{L^{2}}) \]
by applying Young's inequality and the Sobolev embedding (\ref{Sob:ine2}) in the last step. We also derive
\[ |\langle \partial_{x} \tilde{\theta},\tilde{\omega} \rangle_{L^2} | \lesssim   \left(||\nabla \theta_{1}||_{L^2}+||\nabla \theta_{2}||_{L^2}\right) ||\tilde{\omega}||_{L^{2}} . \]
The following two terms are zero due to the divergence-free condition:
\[ \langle \mathcal{L}_{u_1} \tilde{\omega},\tilde{\omega} \rangle_{L^{2}}=\langle u_{1}\cdot \nabla \tilde{\omega}, \tilde{\omega} \rangle_{L^{2}}= 0, \]
\[ \langle \mathcal{L}_{u_1} \tilde{\theta},\tilde{\theta} \rangle_{L^{2}}=\langle u_{1}\cdot \nabla \tilde{\theta}, \tilde{\theta} \rangle_{L^{2}}= 0.\]
Hence, we obtain
\begin{eqnarray*}
\diff \ \left( ||\tilde{\omega}||^{2}_{L^2}+||\tilde{\theta}||^{2}_{L^2}\right) &+& 2 \displaystyle\sum_{i=1}^{\infty} \left( \langle  \mathcal{L}_{\xi_{i}} \tilde{\omega}, \tilde{\omega} \rangle_{L^2}+\langle  \mathcal{L}_{\xi_{i}} \tilde{\theta}, \tilde{\theta} \rangle_{L^{2}} \right) \diff B^{i}_{t} \\
&\lesssim &  \left( 1+ ||\theta_{1}||_{H^{3}}+||\theta_2||_{H^3}+||\omega_{1}||_{H^2}+||\omega_{2}||_{H^{2}}\right)(||\tilde{\omega}||^{2}_{L^2}+||\tilde{\theta}||^{2}_{L^2}) \diff t.
\end{eqnarray*}
Now, by defining
$$Y_t = - \int_0^t \left( 1+ ||\theta_{1}||_{H^{3}} + ||\theta_2||_{H^3}+||\omega_{1}||_{H^2}+||\omega_{2}||_{H^{2}} \right) \diff  s, $$
one rewrites the equation in Gr\"onwall's type form
$$ \diff \ \left( \exp(Y_{t}) \left( ||\tilde{\omega}||^{2}_{L^2} + ||\tilde{\theta}||^{2}_{L^2} \right) \right) \lesssim  -\exp(Y_{t}) \displaystyle\sum_{i=1}^{\infty} \left( \langle \mathcal{L}_{\xi_i} \tilde{\omega}, \tilde{\omega}\rangle_{L^2} + \langle \mathcal{L}_{\xi_i} \tilde{\theta}, \tilde{\theta} \rangle_{L^2} \right)  \diff  B_t^i,$$
and therefore upon integration
$$  \exp(Y_{\bar{\tau}}) \left( ||\tilde{\omega}_{\bar{\tau}}||^{2}_{L^2} + ||\tilde{\theta}_{\bar{\tau}}||^{2}_{L^2}  \right) \lesssim  - \displaystyle\sum_{i=1}^{\infty}  \int_0^{\bar{\tau}} \exp(Y_s) \left( \langle \mathcal{L}_{\xi_i} \tilde{\omega}_s, \tilde{\omega}_s \rangle_{L^2} + \langle \mathcal{L}_{\xi_i} \tilde{\theta}_s, \tilde{\theta}_s \rangle_{L^2} \right)  \diff  B_s^i,$$
for any bounded stopping time $\bar{\tau} \leq \tau$. Hence
$$  \exp(Y_{t \wedge \tau}) \left( ||\tilde{\omega}_{t \wedge \tau}||^{2}_{L^2}+ ||\tilde{\theta}_{t \wedge \tau}||^{2}_{L^2} \right)$$
$$ \lesssim  - \displaystyle\sum_{i=1}^{\infty}  \int_0^{t \wedge \tau} \exp(Y_s) \left( \langle \mathcal{L}_{\xi_i} \tilde{\omega}_s, \tilde{\omega}_s \rangle_{L^2} + \langle \mathcal{L}_{\xi_i} \tilde{\theta}_s, \tilde{\theta}_s \rangle_{L^2} \right)  \diff  B_s^i.$$
By taking expectation, one can obtain
$$\mathbb{E} \left[ \exp(Y_{t \wedge \tau}) \left( ||\tilde{\omega}_{t \wedge \tau}||^{2}_{L^2} + ||\tilde{\theta}_{t \wedge \tau}||^{2}_{L^2}  \right) \right] $$
$$\lesssim  - \displaystyle\sum_{i=1}^{\infty}  \mathbb{E} \left[ \int_0^{t \wedge \tau}   \exp(Y_s) \left( \langle \mathcal{L}_{\xi_i} \tilde{\omega}_s, \tilde{\omega}_s \rangle_{L^2} + \langle \mathcal{L}_{\xi_i} \tilde{\theta}_s, \tilde{\theta}_s \rangle_{L^2} \right)  \diff  B_s^i \right] \leq 0.$$
We conclude
$$\mathbb{E} \left[ \exp(Y_{t \wedge \tau}) \left( ||\tilde{\omega}_{t \wedge \tau}||^{2}_{L^2} + ||\tilde{\theta}_{t \wedge \tau}||^{2}_{L^2}  \right) \right] = 0.$$
This implies that for every $t>0,$
$$  \exp(Y_{t \wedge \tau}) \left( ||\tilde{\omega}_{t \wedge \tau}||^{2}_{L^2}  + ||\tilde{\theta}_{t \wedge \tau}||^{2}_{L^2}  \right) = 0, \quad a.s.$$
Since $Y_{t \wedge \tau}$ is finite we obtain \[ ||\tilde{\omega}_{t \wedge \tau}||^{2}_{L^2} + ||\tilde{\theta}_{t \wedge \tau}||^{2}_{L^2}  = 0, \quad a.s. \]
and thus,
\[ \omega_{1, t\wedge\tau}= \omega_{2, t\wedge\tau}, \text{ \ and \  }  \theta_{1, t\wedge\tau}=\theta_{2,t\wedge\tau}, \quad a.s.  \]
\end{proof}
\subsection{Existence and uniqueness of maximal solutions}
Fix $r>0$ to be determined later and choose a $C^{\infty}$ non-increasing function $\eta_{r}:[0,\infty)\to [0,1]$ such that
\[ \eta_{r}(x)=
\begin{cases}
1, \ \text{for \ } |x| \leq r, \\
0, \ \text{for \ } |x| \geq 2r. 
\end{cases}
\]
Consider the following truncated stochastic Boussinesq equations
\begin{eqnarray}
\label{eq:vor:tru}  \diff \omega_{r} + \eta_{r}(||\nabla u||_{L^\infty})\mathcal{L}_{u_{r}} \omega_{r} \diff t+\displaystyle\sum_{i=1}^{\infty}  \mathcal{L}_{\xi_{i}} \omega_{r} \diff  B^{i}_{t}&=& \frac{1}{2} \displaystyle\sum_{i=1}^{\infty}  \mathcal{L}^{2}_{\xi_{i}}\omega_{r} \diff t+\partial_{x} \theta_{r} \diff t,  \\
\label{eq:theta:tru}
\diff  \theta_{r} +\eta_{r}(||\nabla \theta||_{{L^\infty}}) \mathcal{L}_{u_{r}} \theta_{r} \diff t + \displaystyle\sum_{i=1}^{\infty}  \mathcal{L}_{\xi_{i}}\theta_{r} \diff B^{i}_{t}&=& \frac{1}{2} \displaystyle\sum_{i=1}^{\infty}  \mathcal{L}^{2}_{\xi_{i}}\theta_{r} \diff t.
\end{eqnarray}
 
\begin{lemma}\label{globalimplieslocal}
Fix $r>0$ and $(\omega_{0},\theta_{0})\in H^{2}(\mathbb{T}^{2},\mathbb{R})\times H^{3}(\mathbb{T}^{2},\mathbb{R})$. Let $\omega_{r}:\mathbb{T}^{2}\times\Xi \times [0,\infty)\to \mathbb{R}$, $\theta_{r}:\mathbb{T}^{2}\times\Xi \times [0,\infty)\to \mathbb{R}$ be a global solution of \eqref{eq:vor:tru}-\eqref{eq:theta:tru} in $H^{2}(\mathbb{T}^{2},\mathbb{R})\times H^{3}(\mathbb{T}^{2},\mathbb{R})$. Consider the stopping time
\begin{equation}\label{tau:stopping}
\tau_{r}:= \displaystyle \inf \left\{t\geq 0 : ||\omega||_{H^{2}}+||\theta||_{H^{3}} \geq \dfrac{r}{C} \right\}, 
\end{equation}
where $C$ is chosen in such a way that the following inequality holds:
\[ ||\nabla u||_{L^\infty}+||\nabla \theta||_{L^\infty} \leq C(||\omega||_{H^{2}}+||\theta||_{H^{3}}). \]
Then, if we let $\omega:\mathbb{T}^{2} \times \Xi\times [0,\tau_{r}] \to \mathbb{R}$, $\theta:\mathbb{T}^{2} \times \Xi\times [0,\tau_{r}]\to \mathbb{R}$ be the restriction of $(\omega_{r},\theta_{r})$ to $\tau_r$, we have that $(\omega,\theta)$ is a local solution in $H^{2} \times H^{3}$ to the stochastic Boussinesq equations \eqref{eq:vorticity:ito}-\eqref{eq:theta:ito}.
\end{lemma}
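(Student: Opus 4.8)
The plan is to exploit the definition of $\tau_{r}$ together with the choice of the constant $C$: on the closed stochastic interval $[0,\tau_{r}]$ the smooth cut-offs appearing in \eqref{eq:vor:tru}-\eqref{eq:theta:tru} are identically equal to $1$, so there the truncated system coincides with the genuine stochastic Boussinesq system \eqref{eq:vorticity:ito}-\eqref{eq:theta:ito}. Hence the restriction $(\omega,\theta)$ of the global solution $(\omega_{r},\theta_{r})$ to $[0,\tau_{r}]$ automatically satisfies \eqref{eq:vorticity:ito}-\eqref{eq:theta:ito}, and one only has to check that it meets the remaining requirements of Definition \ref{localsol}.

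First I would verify that $\tau_{r}$ is a genuine stopping time: since $(\omega_{r},\theta_{r})$ is adapted with trajectories in $C([0,\infty);H^{2}\times H^{3})$, the real-valued process $t\mapsto \|\omega_{r}(t)\|_{H^{2}}+\|\theta_{r}(t)\|_{H^{3}}$ is continuous and adapted, so its first hitting time of the closed half-line $[r/C,\infty)$ is a stopping time. Next, for every $t<\tau_{r}$ one has $\|\omega_{r}(t)\|_{H^{2}}+\|\theta_{r}(t)\|_{H^{3}}<r/C$, whence by the choice of $C$
\[ \|\nabla u_{r}(t)\|_{L^{\infty}}+\|\nabla\theta_{r}(t)\|_{L^{\infty}}\le C\big(\|\omega_{r}(t)\|_{H^{2}}+\|\theta_{r}(t)\|_{H^{3}}\big)<r, \]
and by path continuity the bound with ``$\le r$'' survives at $t=\tau_{r}$. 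In all cases $\|\nabla u_{r}(t)\|_{L^{\infty}}\le r$ and $\|\nabla\theta_{r}(t)\|_{L^{\infty}}\le r$, so $\eta_{r}(\|\nabla u_{r}(t)\|_{L^{\infty}})=\eta_{r}(\|\nabla\theta_{r}(t)\|_{L^{\infty}})=1$ for all $t\in[0,\tau_{r}]$; here $u_{r}=K\star\omega_{r}$ is the Biot--Savart reconstruction, which on $[0,\tau_{r}]$ coincides with the velocity $u$ associated with $\omega$.

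Consequently, on $[0,\tau_{r}]$ the terms $\eta_{r}(\cdot)\mathcal{L}_{u_{r}}\omega_{r}$ and $\eta_{r}(\cdot)\mathcal{L}_{u_{r}}\theta_{r}$ reduce to $\mathcal{L}_{u}\omega$ and $\mathcal{L}_{u}\theta$, while every other term is unchanged, so $(\omega,\theta)$ solves \eqref{eq:vorticity:ito}-\eqref{eq:theta:ito} there. The continuity of trajectories in $H^{2}\times H^{3}$ and the adaptedness of $\omega(\cdot\wedge\tau_{r})$, $\theta(\cdot\wedge\tau_{r})$ are inherited from $(\omega_{r},\theta_{r})$ by optional stopping, and for any finite stopping time $\tau'\le\tau_{r}$ the integral identities of Definition \ref{localsol} follow from the corresponding identities for the truncated system: the Lebesgue integrands agree on $[0,\tau']$ since $\eta_{r}=1$ there, and $\int_{0}^{\tau'}\mathcal{L}_{\xi_{i}}\omega\,\diff B^{i}_{s}=\int_{0}^{\tau'}\mathcal{L}_{\xi_{i}}\omega_{r}\,\diff B^{i}_{s}$ (and likewise for $\theta$) because $\omega=\omega_{r}$, $\theta=\theta_{r}$ on $[0,\tau_{r}]\supseteq[0,\tau']$. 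This yields that $(\tau_{r},\omega,\theta)$ is a local solution.

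This statement is essentially a localization/bookkeeping argument, so there is no substantial analytical obstacle. The only points demanding a little care are the treatment of the endpoint $t=\tau_{r}$ — where one still needs $\eta_{r}=1$ even though the cut-offs involve $\|\nabla u\|_{L^{\infty}}$ and $\|\nabla\theta\|_{L^{\infty}}$ separately while the stopping condition controls only their sum — and the (standard) compatibility of restricting the stochastic integrals to the stopping time $\tau_{r}$ with the almost-sure identities $\omega=\omega_{r}$, $\theta=\theta_{r}$ on $[0,\tau_{r}]$.
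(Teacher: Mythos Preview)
Your proposal is correct and follows exactly the same approach as the paper: on $[0,\tau_r]$ the Sobolev control forces $\|\nabla u\|_{L^\infty}+\|\nabla\theta\|_{L^\infty}\le r$, hence both cut-offs equal $1$ and the truncated system coincides with \eqref{eq:vorticity:ito}--\eqref{eq:theta:ito}. The paper dispatches this in two lines (``straightforward by construction''), whereas you have helpfully spelled out the stopping-time property, the endpoint $t=\tau_r$, and the verification of Definition~\ref{localsol}.
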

\begin{proof}[Proof of Lemma \ref{globalimplieslocal}]
The proof is straightforward by construction.  For any $t\in[0,\tau_{r}]$ we have that
\[ ||\nabla u||_{L^\infty}+||\nabla \theta||_{L^\infty} \leq C(||\omega||_{H^{2}}+||\theta||_{H^{3}}) \leq r, \]
and therefore, $\eta_{r}(||\nabla u||_{\infty}) = \eta_{r}(||\nabla \theta||_{\infty})=1$.
\end{proof}\vspace{0.2cm}
Let us now state the result which will be the pillar for proving existence and uniqueness of maximal solutions of the stochastic Boussinesq equations \eqref{eq:vorticity:ito}-\eqref{eq:theta:ito}.  
\begin{proposition} \label{mainproposition}
Given $r>0$ and $(\omega_{0},\theta_{0})\in H^{2}(\mathbb{T}^{2},\mathbb{R})\times H^{3}(\mathbb{T}^{2},\mathbb{R})$, there exists a unique global solution $(\omega,\theta)$ in  $H^{2}\times H^{3}$ to the truncated equations \eqref{eq:vor:tru}-\eqref{eq:theta:tru}.
\end{proposition}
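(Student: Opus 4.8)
\textbf{Proof strategy for Proposition \ref{mainproposition} (proposal).} The plan is to produce the global solution of the truncated system \eqref{eq:vor:tru}-\eqref{eq:theta:tru} by first solving a hyper-regularised version, deriving $\nu$-uniform estimates, and then passing to the limit via compactness and the Gy\"ongy-Krylov principle.

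\textbf{Step 1 (hyper-regularisation and local mild solutions).} For $\nu>0$ and $k$ large enough (to be fixed in the estimates below), I add the dissipative operators $\nu\Delta^{k}\omega_{r}$, $\nu\Delta^{k}\theta_{r}$ to \eqref{eq:vor:tru}-\eqref{eq:theta:tru}, and cast the resulting system in the abstract form \eqref{abstractformulation}-\eqref{abstractinitial} with $A=-\nu\Delta^{k}$. I would solve it in the mild sense of Section \ref{Duhamel} by a Banach fixed-point argument for adapted processes in (an $L^{2}(\Xi;\,\cdot\,)$-type closure of) $C([0,T_{0}];H^{2}\times H^{3})$: the Duhamel map is a contraction on a ball for $T_{0}$ small, because the smoothing bound \eqref{semigroupine} and Lemma \ref{estimation:semigroup} convert the derivative loss in $BU$, $LU$, $R_{i}U$ into an integrable time singularity, and the truncation $\eta_{r}$ renders $BU$ Lipschitz on bounded sets. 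This yields a unique local mild solution $(\omega^{\nu},\theta^{\nu})$.

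\textbf{Step 2 (uniform a priori estimates and globalisation).} Applying It\^o's formula to $\|\omega^{\nu}\|_{H^{2}}^{2}+\|\theta^{\nu}\|_{H^{3}}^{2}$ (equivalently testing against $\Lambda^{4}\omega^{\nu}$ and $\Lambda^{6}\theta^{\nu}$), I would estimate the transport commutators by Kato-Ponce/commutator inequalities and the Biot-Savart bound \eqref{BiotIne}, noting that the dangerous factor $\|\nabla u^{\nu}\|_{L^{\infty}}$ is bounded by $2r$ on the support of $\eta_{r}$; the coupling term $\partial_{x}\theta^{\nu}$ is absorbed since $\theta^{\nu}$ lives one derivative above $\omega^{\nu}$; the term $-\nu\Delta^{k}$ is non-positive; and the It\^o correction $\tfrac12\sum\mathcal{L}^{2}_{\xi_{i}}$ together with the quadratic variation $\tfrac12\sum\langle\mathcal{L}_{\xi_{i}}\cdot,\mathcal{L}_{\xi_{i}}\cdot\rangle$ of the noise is controlled by the cancellation inequality \eqref{eq:cancellation2}. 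Gr\"onwall's lemma together with the Burkholder-Davis-Gundy inequality \eqref{BGD:ineq} then gives a bound on $\mathbb{E}[\sup_{[0,T]}(\|\omega^{\nu}\|_{H^{2}}^{2}+\|\theta^{\nu}\|_{H^{3}}^{2})]$ for every $T$, which both prevents blow-up (hence the local solution is global) and, crucially, is uniform in $\nu$ because the dissipation is never used to absorb the noise.

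\textbf{Step 3 (compactness, limit, and uniqueness).} Combining these uniform bounds with a fractional-in-time estimate for $(\omega^{\nu},\theta^{\nu})$ in $W^{\alpha,2}([0,T];H^{-N})$ read off from the integral formulation (the stochastic integral contributing via \eqref{BGD:ineq}), the laws of $(\omega^{\nu},\theta^{\nu})$ are tight on $L^{2}([0,T];H^{2-\epsilon}\times H^{3-\epsilon})$ by Lemma \ref{compactlemma} and Prokhorov's theorem \ref{prohorov}. Skorokhod's theorem \ref{rep} furnishes a.s.\ convergent copies on a new probability space, and one passes to the limit in the weak formulation of Definition \ref{localsol} (linear terms by continuity, the quadratic transport term by the strong $L^{2}([0,T];H^{2-\epsilon})$ convergence), obtaining a probabilistically weak global solution of \eqref{eq:vor:tru}-\eqref{eq:theta:tru}. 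Pathwise uniqueness for the truncated system follows exactly as in Proposition \ref{prop:uniqueness} (the cutoff only improves the difference estimates), so the Gy\"ongy-Krylov Lemma \ref{strong} upgrades this to the unique probabilistically strong global solution on the original basis $\mathcal{S}$; a standard weak-continuity-plus-energy-balance argument then places the trajectories in $C([0,\infty);H^{2}\times H^{3})$.

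\textbf{Main obstacle.} The crux is Step 2: closing the top-order $H^{2}$ (resp.\ $H^{3}$) energy balance even though the noise $\sum\mathcal{L}_{\xi_{i}}\omega^{\nu}\,\diff B^{i}_{t}$ loses one derivative. Taken separately, the It\^o correction and the martingale's quadratic variation each produce $\|\omega^{\nu}\|_{H^{3}}$-type quantities that are not available at this regularity; only their precise leading-order cancellation — Proposition \ref{Liecancellations}, in its general form Theorem \ref{generalcancellations2} — makes the estimate close, and keeping the resulting constants independent of $\nu$ is the delicate point.
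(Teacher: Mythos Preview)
Your overall strategy matches the paper's: hyper-regularise, globalise via a fixed point plus a priori estimates, extract a limit by tightness/Prokhorov/Skorokhod, and upgrade to a strong solution via Gy\"ongy--Krylov. The identification of the cancellation \eqref{eq:cancellation2} as the crux of the $\nu$-uniform energy estimate is also correct, and your Step 2 is essentially the paper's Lemma \ref{lemma:est:apriori:full}.

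There is, however, a genuine oversight in your uniqueness claim. You write that pathwise uniqueness for the truncated system ``follows exactly as in Proposition \ref{prop:uniqueness}'' and that ``the cutoff only improves the difference estimates''. In fact the cutoff \emph{worsens} them: in the difference equation one picks up the new term
\[
\bigl(\eta_r(\|\nabla u_1\|_{L^\infty})-\eta_r(\|\nabla u_2\|_{L^\infty})\bigr)\,\mathcal{L}_{u_1}\omega_1,
\]
and the Lipschitz bound on $\eta_r$ only gives $|\eta_r(\|\nabla u_1\|_{L^\infty})-\eta_r(\|\nabla u_2\|_{L^\infty})|\lesssim \|\nabla\tilde u\|_{L^\infty}\lesssim \|\tilde\omega\|_{H^2}$, not $\|\tilde\omega\|_{L^2}$. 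Consequently the $L^2$ difference argument of Proposition \ref{prop:uniqueness} does \emph{not} close by itself; one must run the full $H^2\times H^3$ energy estimate on the differences (invoking \eqref{eq:cancellation2} again at that level), which is precisely what the paper does in a separate subsection. This is not fatal to your scheme --- the conclusion is true --- but it is a real extra step that your proposal misrepresents as trivial.

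A minor point of comparison: the paper uses \emph{different} dissipation orders, $\nu\Delta^{5}$ on $\omega$ and $\nu\Delta^{7}$ on $\theta$, rather than a single $\nu\Delta^{k}$. This tailors the semigroup smoothing to the asymmetric target regularities $H^{2}\times H^{3}$ and fixes the negative indices $H^{-8}$, $H^{-11}$ in the fractional-in-time estimate; your single $k$ works if taken large enough, but the paper's choice streamlines the bookkeeping.
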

The rest of Section \ref{proofs:main:results} and Section \ref{comp:lim:proc} is devoted to proving Proposition \ref{mainproposition}. However, we first analyse how Proposition \ref{mainproposition} implies our main result stated in Theorem \ref{mainth}, showing existence and uniqueness of maximal solutions of equations \eqref{eq:vorticity:ito}-\eqref{eq:theta:ito}.
\begin{proof}[Proof of Theorem \ref{mainth}]
The proof is rather standard and merely constructive. By choosing $r=n\in\mathbb{N}$ in Lemma \ref{globalimplieslocal}, we have that $(\tau_{n},\omega_{n},\theta_{n})$ in $H^{2}(\mathbb{T}^{2},\mathbb{R})\times H^{3}(\mathbb{T}^{2},\mathbb{R})$ are local solutions of the stochastic Boussinesq equations \eqref{eq:vorticity:ito}-\eqref{eq:theta:ito}. Let us define $\tau_{max}:=\displaystyle \lim_{n\to\infty} \tau_{n}$, $ \omega:= \omega_{n},$ and $\theta:=\theta_{n}$ on $[0,\tau_{n})$. The statement that either $\tau_{max}=\infty$ or $\displaystyle\lim\sup_{s\nearrow \tau_{max}}\left(||\omega(s)||_{H^{2}}+ ||\theta(s)||_{H^{3}}\right)= \infty$ is easy to check. Indeed, assume that $\tau_{max}<\infty$. Then by continuity of $(\omega,\theta)$, there exists some stopping time $\tau'_{n}<\tau_{n}$ satisfying $|\tau'_{n}-\tau_{n}| \leq \frac{1}{n}$ and $||\omega(\tau'_{n})||_{H^{2}}+ ||\theta(\tau'_{n})||_{H^{3}}\geq \frac{n-1}{C}.$ Hence,
\[ \displaystyle\lim\sup_{s\nearrow \tau_{max}}\left(||\omega(s)||_{H^{2}}+ ||\theta(s)||_{H^{3}}\right) \geq  \displaystyle\lim\sup_{n\to\infty}\left(||\omega(\tau'_{n})||_{H^{2}}+ ||\theta(\tau'_{n})||_{H^{3}}\right)=\infty. \]
Now let us show that $(\tau_{max},\omega,\theta)$ is a maximal solution. Assume by contradiction, that there exists another solution $(\tau',\omega',\theta')$ such that $\omega'=\omega$ and $\theta=\theta'$ on  $[0,\tau'\wedge \tau_{max})$, with $\tau'> \tau_{max}$ on a set with positive probability. This is only possible if $\tau_{max}<\infty$ and therefore, on the set $\{\tau' > \tau_{max}\},$
\begin{eqnarray*}
 \infty= \displaystyle\lim\sup_{n\to\infty}\left(||\omega(\tau'_{n})||_{H^{2}}+ ||\theta(\tau'_{n})||_{H^{3}}\right) &=& \displaystyle\lim\sup_{n\to\infty}\left(||\omega'(\tau'_{n})||_{H^{2}}+ ||\theta'(\tau'_{n})||_{H^{3}}\right) \\
 &=& ||\omega'(\tau_{max})||_{H^{2}}+ ||\theta'(\tau_{max})||_{H^{3}}< \infty,
\end{eqnarray*}
since $\omega',\theta'$ are continuous on $[0,\tau')$, leading to a contradiction. We conclude that $\tau' \leq \tau_{max}$ and therefore, $(\tau_{max},\omega,\theta)$ is a maximal solution. Suppose now that there exists another maximal solution $(\tau,\omega^{\star},\theta^{\star})$ with the same initial conditions $(\omega_{0},\theta_{0})$. Due to the uniqueness result of Proposition \ref{prop:uniqueness}, one deduces that $\omega=\omega^{\star}$, $\theta=\theta^{\star}$ on $[0, \tau\wedge\tau_{max})$. By a similar argument as before, we cannot have $\tau_{max}<\tau$ on any set with positive measure, so $\tau \leq \tau_{max}$. From the third property of maximal solutions in Definition \ref{def:maximal}, we obtain $\tau=\tau_{max}$, and hence $\omega=\omega^{\star}, \theta=\theta^{\star}$ on $[0,\tau_{max})$.
\end{proof}

\subsection{Uniqueness of solutions of the truncated Boussinesq equations}
In this subsection we show uniqueness of solutions of the truncated Boussinesq equations \eqref{eq:vor:tru}-\eqref{eq:theta:tru}, and therefore we prove the uniqueness part of Proposition \ref{mainproposition}. The proof follows closely the same strategy as for the uniqueness result in Proposition \ref{prop:uniqueness}. However, since we have to perform $H^{2} $ and $H^3$ estimates, it is more involved. First of all, let  $\omega_{r,1},\omega_{r,2}:\mathbb{T}^{2}\times \Xi \times [0,\infty)\to \mathbb{R}$ and $\theta_{r,1},\theta_{r,2}:\mathbb{T}^{2}\times \Xi\times [0,\infty)\to \mathbb{R}$ be two global solutions of \eqref{eq:vor:tru}-\eqref{eq:theta:tru} in $H^{2}$ and $H^{3}$ respectively. Define the differences $\tilde{\omega} = \omega_{r,1}-\omega_{r,2},$ $\tilde{\theta} = \theta_{r,1} - \theta_{r,2},$ and $\tilde{u}=u_{r,1}-u_{r,2}$. We also define the truncation functions $\eta_r(||\nabla\tilde{u}||_{L^{\infty}})=\eta_r(||\nabla u_{1}||_{L^{\infty}})-\eta_{r}(||\nabla u_{2}||_{L^{\infty}})$ and $\eta_{r}(||\nabla\tilde{\theta}||_{L^{\infty}})=\eta_{r}(||\nabla \theta_{1}||_{L^{\infty}})-\eta_{r}(||\nabla \theta_{2}||_{L^{\infty}})$. To simplify notation, we omit the $r$ parameter dependence along the proof. First let us estimate the evolution of $||\tilde{\omega}||_{L^2}, ||\tilde{\theta}||_{L^2},$ and then we will estimate the evolution of $||\Delta\tilde{\omega}||_{L^2}, ||\Lambda^{3}\tilde{\theta}||_{L^2}$. We have
\makeatletter
 \def\@eqnnum{{\normalsize \normalcolor (\theequation)}}
  \makeatother
 { \small	
\begin{eqnarray}\label{eq:trunc:dif:vor}
 \diff \tilde{\omega} + \eta(||\nabla u_{1}||_{L^{\infty}})\mathcal{L}_{u_1}\omega_{1}\diff t-\eta(||\nabla u_{2}||_{L^{\infty}})\mathcal{L}_{u_2}\omega_{2}\diff t+ \displaystyle\sum_{i=1}^{\infty}  \mathcal{L}_{\xi_{i}} \tilde{\omega} \diff B^{i}_{t}&=& \frac{1}{2} \displaystyle\sum_{i=1}^{\infty}  \mathcal{L}^{2}_{\xi_{i}}\tilde{\omega} \diff t+\partial_{x} \tilde{\theta} \diff t,  \\
 \diff \tilde{\theta} + \eta(||\nabla \theta_{1}||_{L^{\infty}})\mathcal{L}_{u_1}\theta_{1}\diff t-\eta(||\nabla \theta_{2}||_{L^{\infty}})\mathcal{L}_{u_2}\theta_{2}\diff t + \displaystyle\sum_{i=1}^{\infty}  \mathcal{L}_{\xi_{i}}\tilde{\theta} \diff B^{i}_{t}&=& \frac{1}{2} \displaystyle\sum_{i=1}^{\infty}  \mathcal{L}^{2}_{\xi_{i}}\tilde{\theta} \diff t.
\end{eqnarray}}
Therefore, by using It\^o's lemma and the cancellation property \eqref{eq:cancellation1}, we obtain that
\begin{eqnarray*}
\frac{1}{2}  \diff  ||\tilde{\omega}||^{2}_{L^{2}} + \langle \eta(||\nabla u_{1}||_{L^{\infty}})\mathcal{L}_{u_1}\omega_{1}-\eta(||\nabla u_{2}||_{L^{\infty}})\mathcal{L}_{u_2}\omega_{2},\tilde{\omega} \rangle_{L^2} \ \diff t + \displaystyle\sum_{i=1}^{\infty} \langle  \mathcal{L}_{\xi_{i}} \tilde{\omega}, \tilde{\omega} \rangle_{L^{2}}\diff B^{i}_{t}&=& \langle \partial_{x} \tilde{\theta}, \tilde{\omega} \rangle_{L^2} \diff t,  \\
\frac{1}{2}  \diff  ||\tilde{\theta}||^{2}_{L^2} + \langle \eta(||\nabla \theta_{1}||_{L^{\infty}})\mathcal{L}_{u_1}\theta_{1}-\eta(||\nabla \theta_{2}||_{L^{\infty}})\mathcal{L}_{u_2}\theta_{2}, \tilde{\theta} \rangle_{L^2} \ \diff t + \displaystyle\sum_{i=1}^{\infty} \langle  \mathcal{L}_{\xi_{i}} \tilde{\theta}, \tilde{\theta} \rangle_{L^{2}} \diff B^{i}_{t}&=& 0. 
\end{eqnarray*}
To estimate the nonlinear terms we rewrite them as follows
\begin{eqnarray*}
\langle \eta(||\nabla u_{1}||_{L^{\infty}})\mathcal{L}_{u_1}\omega_{1}-\eta(||\nabla u_{2}||_{L^{\infty}})\mathcal{L}_{u_2}\omega_{2},\tilde{\omega} \rangle_{L^2}&=& \langle \left(\eta(||\nabla u_{1}||_{L^{\infty}})-\eta(||\nabla u_{2}||_{L^{\infty}})\right) \mathcal{L}_{u_{1}}\omega_{1}, \tilde{\omega} \rangle_{L^{2}} \\ &+& \langle \eta(||\nabla u_{2}||_{L^{\infty}})\mathcal{L}_{\tilde{u}}\omega_{1},\tilde{\omega}\rangle_{L^{2}}+  \langle \eta(||\nabla u_{2}||_{L^{\infty}})\mathcal{L}_{u_{2}}\tilde{\omega}, \tilde{\omega} \rangle_{L^{2}}, 
\end{eqnarray*}
and
\begin{eqnarray*}
\langle \eta(||\nabla \theta_{1}||_{L^{\infty}})\mathcal{L}_{u_1}\theta_{1}-\eta(||\nabla\theta_{2}||_{L^{\infty}})\mathcal{L}_{u_2}\theta_{2},\tilde{\theta} \rangle_{L^2} &=& \langle \left(\eta(||\nabla \theta_{1}||_{L^{\infty}})-\eta(||\nabla \theta_{2}||_{L^{\infty}})\right) \mathcal{L}_{u_{1}}\theta_{1}, \tilde{\theta} \rangle_{L^{2}} \\ &+& \langle \eta(||\nabla \theta_{2}||_{L^{\infty}})\mathcal{L}_{\tilde{u}}\theta_{1},\tilde{\theta}\rangle_{L^{2}}+ \langle \eta(||\nabla \theta_{2}||_{L^{\infty}})\mathcal{L}_{u_{2}}\tilde{\theta}, \tilde{\theta} \rangle_{L^{2}}. 
\end{eqnarray*}
Now notice that, on the set where $\tau_{2}\leq \tau_{1}$, the nonlinear terms are zero if $||\omega_{1}||_{H^2}+||\theta_{1}||_{H^{3}} \geq r/C$ (this is simply a direct consequence of the definition of the stopping times $\tau_{1},\tau_{2}$ provided in \eqref{tau:stopping}). Therefore, the nonlinear terms can be bounded as follows:
\begin{equation}\label{est:nonlinear:diff1}
  |\langle \eta(||\nabla u_{1}||_{L^{\infty}})\mathcal{L}_{u_1}\omega_{1}-\eta(||\nabla u_{2}||_{L^{\infty}})\mathcal{L}_{u_2}\omega_{2},\tilde{\omega} \rangle_{L^2} |   \lesssim  \left(1+||\omega_{1}||_{H^{2}} \right)||\tilde{\omega}||^{2}_{H^{2}},
\end{equation}
where  we have used that $\eta$ is Lipschitz, so
\begin{eqnarray*}
|\eta(||\nabla u_{1}||_{L^{\infty}})  -\eta(||\nabla u_{2}||_{L^{\infty}})| &\leq & C \left( ||\nabla u_{1}||_{L^\infty}-||\nabla u_{2}||_{L^\infty}\right) \\
&\leq & C ||\nabla\tilde{u}||_{L^\infty} \leq C ||\tilde{\omega}||_{H^{2}}.
\end{eqnarray*}
Similarly
\begin{equation}\label{est:nonlinear:diff2}
|\langle \eta(||\nabla \theta_{1}||_{L^{\infty}})\mathcal{L}_{u_1}\theta_{1}-\eta(||\nabla \theta_{2}||_{L^{\infty}})\mathcal{L}_{u_2}\theta_{2},\tilde{\theta} \rangle_{L^2}| \lesssim \left(1+||\theta_{1}||_{H^{3}}\right)\left( ||\tilde{\omega}||^{2}_{H^{2}}+||\tilde{\theta}||^{2}_{H^{3}}\right),
\end{equation}
where once again we have needed
\begin{eqnarray*}
|\eta(||\nabla \theta_{1}||_{L^{\infty}})-\eta(||\nabla \theta_{2}||_{L^{\infty}})| &\leq & C \left( ||\nabla \theta_{1}||_{L^\infty}-||\nabla \theta_{2}||_{L^\infty}\right) \\
&\leq & C ||\nabla\tilde{\theta}||_{L^\infty} \leq C ||\tilde{\theta}||_{H^{3}}.
\end{eqnarray*}
Putting \eqref{est:nonlinear:diff1} and \eqref{est:nonlinear:diff2} together, we deduce
\[ \diff ||\tilde{\omega}||^{2}_{L^{2}}+  \diff ||\tilde{\theta}||^{2}_{L^2}+\displaystyle\sum_{i=1}^{\infty} \langle  \mathcal{L}_{\xi_{i}} \tilde{\omega}, \tilde{\omega}\rangle_{L^{2}} \diff B^{i}_{t} +\displaystyle\sum_{i=1}^{\infty} \langle  \mathcal{L}_{\xi_{i}} \tilde{\theta}, \tilde{\theta} \rangle_{L^{2}} \ \diff B^{i}_{t} \lesssim \left(1+||\omega_{1}||_{H^{2}}+||\theta_{1}||_{H^3}\right) \left( ||\tilde{\omega}||^{2}_{H^2}+||\tilde{\theta}||^{2}_{H^{3}} \right) \diff t.\]
Also, by the same reason as before, we have that on the set $\{\tau_{1}\leq \tau_{2}\}$, the nonlinear terms are zero if $||\omega_{2}||_{H^{2}}+||\theta_{2}||_{H^{3}} \geq r/C$. Hence, by applying similar arguments to the ones above, one can conclude that
\[ \diff ||\tilde{\omega}||^{2}_{L^{2}}+  \diff ||\tilde{\theta}||^{2}_{L^2}+\displaystyle\sum_{i=1}^{\infty} \langle  \mathcal{L}_{\xi_{i}} \tilde{\omega}, \tilde{\omega}\rangle_{L^{2}} \diff B^{i}_{t} +\displaystyle\sum_{i=1}^{\infty} \langle  \mathcal{L}_{\xi_{i}} \tilde{\theta}, \tilde{\theta} \rangle_{L^{2}} \ \diff B^{i}_{t} \lesssim \left(1+||\omega_{2}||_{H^{2}}+||\theta_{2}||_{H^3}\right) \left( ||\tilde{\omega}||^{2}_{H^2}+||\tilde{\theta}||^{2}_{H^{3}} \right) \diff t.\]
Next, let us estimate the evolution of $||\Delta\tilde{\omega}||^{2}_{L^{2}}$. Taking the Laplace operator on equation \eqref{eq:trunc:dif:vor} and $L^{2}$ inner product against $\Delta\tilde{\omega},$ we obtain
\begin{eqnarray*} 
\diff ||\Delta\tilde{\omega}||^{2}_{L^2} &+& \langle \Delta\left( \eta(||\nabla u_{1}||_{L^{\infty}})\mathcal{L}_{u_1}\omega_{1}-\eta(||\nabla u_{2}||_{L^{\infty}})\mathcal{L}_{u_2}\omega_{2}\right), \Delta \tilde{\omega} \rangle_{L^2} \ \diff t +  \displaystyle\sum_{i=1}^{\infty} \langle \Delta \mathcal{L}_{\xi_{i}} \tilde{\omega}, \Delta\tilde{\omega}\rangle_{L^{2}} \diff B^{i}_{t} \\&=& \frac{1}{2} \displaystyle\sum_{i=1}^{\infty} \langle \Delta \mathcal{L}^2_{\xi_i} \tilde{\omega}, \Delta \tilde{\omega}\rangle_{L^2} \ \diff t  + \frac{1}{2}\displaystyle \sum_{i=1}^{\infty} \langle \Delta\mathcal{L}_{\xi_{i}}\tilde{\omega},\Delta \mathcal{L}_{\xi_{i}}\tilde{\omega} \rangle_{L^2} \ \diff t + \langle \Delta \partial_{x}\tilde{\theta}, \Delta \tilde{\omega} \rangle_{L^2} \ \diff t.
\end{eqnarray*}
First of all note that 
\[ | \langle \Delta \mathcal{L}_{\tilde{u}}\omega_{1}, \Delta \tilde{\omega} \rangle_{L^{2}}| \lesssim ||\omega_{1}||_{H^{2}} ||\tilde{\omega}||^{2}_{H^{2}},
\]
\[ |  \langle \Delta \mathcal{L}_{u_{2}}\tilde{\omega}, \Delta \tilde{\omega} \rangle_{L^{2}} | \lesssim ||\omega_{2}||_{H^{2}}||\tilde{\omega}||^{2}_{H^{2}}. \]
The same kind of estimates holds for the terms $\langle \Delta \mathcal{L}_{\tilde{u}}\omega_{2}, \Delta \tilde{\omega} \rangle_{L^{2}},\langle \Delta \mathcal{L}_{u_{1}}\tilde{\omega}, \Delta \tilde{\omega} \rangle_{L^{2}}$. Again, on the set $\{\tau_{2}\leq \tau_{1}\}$
\[
|\langle \left(\eta(||\nabla u_{1}||_{L^{\infty}})-\eta(||\nabla u_{2}||_{L^{\infty}})\right) \Delta \mathcal{L}_{u_{1}}\omega_{1}, \Delta \tilde{\omega} \rangle_{L^{2}}| \leq ||\omega_{1}||_{H^{2}}||\tilde{\omega}||^{2}_{H^{2}}. \]
In the same way, on the set  $\{\tau_{1}\leq \tau_{2}\}$
\[ |\langle \left(\eta(||\nabla u_{1}||_{L^{\infty}})-\eta(||\nabla u_{2}||_{L^{\infty}})\right) \Delta \mathcal{L}_{u_{2}}\omega_{2}, \Delta \tilde{\omega} \rangle_{L^{2}}| \leq ||\omega_{2}||_{H^{2}} ||\tilde{\omega}||^{2}_{H^{2}}. \]
Hence putting everything together we obtain:
\begin{eqnarray*} 
\diff ||\Delta\tilde{\omega}||^{2}_{L^2}  &+&  \displaystyle\sum_{i=1}^{\infty} \langle \Delta \mathcal{L}_{\xi_{i}} \tilde{\omega}, \Delta\tilde{\omega}\rangle_{L^{2}}\diff B^{i}_{t}\lesssim \left( 1+||\omega_{1}||_{H^{2}}+||\omega_{2}||_{H^{2}}\right) \left( ||\tilde{\omega}||^{2}_{H^{2}} +||\tilde{\theta}||^{2}_{H^{3}}\right) \diff t \\ &+&\frac{1}{2} \displaystyle\sum_{i=1}^{\infty} \langle \Delta \mathcal{L}^2_{\xi_i} \tilde{\omega}, \Delta \tilde{\omega}\rangle_{L^2} \ \diff t  + \frac{1}{2}\displaystyle \sum_{i=1}^{\infty} \langle \Delta\mathcal{L}_{\xi_{i}}\tilde{\omega},\Delta \mathcal{L}_{\xi_{}}\tilde{\omega} \rangle_{L^2} \ \diff t.
\end{eqnarray*}
The terms on the second line above can be bounded by invoking estimate \eqref{eq:cancellation2} with $k=2,$ thus obtaining
\begin{equation*} 
\diff ||\Delta\tilde{\omega}||^{2}_{H^2} +  \displaystyle\sum_{i=1}^{\infty} \langle \Delta \mathcal{L}_{\xi_{i}} \tilde{\omega}, \Delta\tilde{\omega}\rangle_{L^{2}} \diff B^{i}_{t}\lesssim \left( 1+||\omega_{1}||_{H^{2}}+||\omega_{2}||_{H^{2}}\right) \left( ||\tilde{\omega}||^{2}_{H^{2}}+||\tilde{\theta}||^{2}_{H^{3}}\right) \diff t. 
\end{equation*}
Finally, we derive the estimate for $||\Lambda^{3}\tilde{\theta}||^{2}_{L^{2}}$. To avoid repetition and simplify the exposition, we omit further details on this $H^{3}$ evolution computation. Without much effort, one realises that (take into account \eqref{eq:cancellation2} with $k=3$)
\[\diff ||\Lambda^{3}\tilde{\theta}||^{2}_{L^2} +  \displaystyle\sum_{i=1}^{\infty} \langle \Lambda^{3} \mathcal{L}_{\xi_{i}} \tilde{\theta}, \Lambda^{3}\tilde{\theta}\rangle_{L^{2}} \ \diff B^{i}_{t} \lesssim  \left( 1+||\omega_{1}||_{H^{2}}+||\omega_{2}||_{H^{2}}+||\theta_{1}||_{H^{3}}+||\theta_{2}||_{H^{3}}\right) \left( ||\tilde{\omega}||^{2}_{H^{2}}+||\tilde{\theta}||^{2}_{H^{3}}\right) \diff t. \]
Combining the estimates for $\tilde{\omega}$ and $\tilde{\theta}$ we conclude
\[ 
\begin{split}
\diff ||\Delta\tilde{\omega}||^{2}_{L^2} &+ \diff ||\Lambda^{3}\tilde{\theta}||^{2}_{L^2} + \displaystyle\sum_{i=1}^{\infty} \left( \langle \Delta \mathcal{L}_{\xi_{i}} \tilde{\omega}, \Delta\tilde{\omega}\rangle_{L^{2}}+\langle \Lambda^{3} \mathcal{L}_{\xi_{i}} \tilde{\theta}, \Lambda^{3}\tilde{\theta}\rangle_{L^{2}} \right) \diff B^{i}_{t}  \\
&\lesssim  \left( 1+||\omega_{1}||_{H^{2}}+||\omega_{2}||_{H^{2}}+||\theta_{1}||_{H^{3}}+||\theta_{2}||_{H^{3}}\right) \left( ||\tilde{\omega}||^{2}_{H^{2}}+||\tilde{\theta}||^{2}_{H^{3}}\right) \diff t.
\end{split}
\]
Now, it is enough to repeat the arguments in the proof of Proposition \ref{prop:uniqueness} to finish the proof.
\begin{remark}
It is also worth mentioning that in order to make full sense of some of the terms in the previous computation we would actually require more regularity. This could be made rigorous by introducing some mollifiers or a Fourier truncation type function $\mathcal{S}_{r}$ such that
\[ \widehat{\mathcal{S}{_r} f}(\xi)=1_{B_{r}}\widehat{f}(\xi), \]
where $B_{r}$ is a ball of radius $r$ centred at the origin and $1_{B_{r}}$ represents the indicator function. However, we do not carry out this argument here, since it would give rise to several lengthy and tedious computations that are quite standard.
\end{remark}
\subsection{Global solutions of the hyper-regularised truncated Boussinesq equations}\label{globalregular}
We are left to show global existence of solutions of the truncated Boussinesq equations \eqref{eq:vor:tru}-\eqref{eq:theta:tru}. To that purpose, let us consider the following hyper-regularised truncated equations
\begin{eqnarray}
\label{eq:vor:reg}  \diff  \omega^{\nu}_{r} + \eta_{r}(\omega^{\nu}_{r})\mathcal{L}_{u^{\nu}_{r}} \omega^{\nu}_{r} \diff t+\displaystyle\sum_{i=1}^{\infty}  \mathcal{L}_{\xi_{i}} \omega^{\nu}_{r} \diff  B^{i}_{t}&=& \nu \Delta^{5} \omega^{\nu}_{r} \diff t +\frac{1}{2} \displaystyle\sum_{i=1}^{\infty}  \mathcal{L}^{2}_{\xi_{i}}\omega^{\nu}_{r} \diff t+\partial_{x} \theta^{\nu}_{r} \diff t,  \\
\label{eq:theta:reg}
\diff  \theta^{\nu}_{r} +\eta_{r}(\theta^{\nu}_{r}) \mathcal{L}_{u^{\nu}_{r}} \theta^{\nu}_{r} \diff t + \displaystyle\sum_{i=1}^{\infty}  \mathcal{L}_{\xi_{i}}\theta^{\nu}_{r} \diff  B^{i}_{t}&=& \nu \Delta^{7} \theta^{\nu}_{r} \diff t + \frac{1}{2} \displaystyle\sum_{i=1}^{\infty}  \mathcal{L}^{2}_{\xi_{i}}\theta^{\nu}_{r} \diff t, \\
\label{eq:initia:reg}
\omega^{\nu}_{r}(x,0)=\omega_{0}&,&  \ \theta^{\nu}_{r}(x,0)=\theta_{0},
\end{eqnarray}
where $\nu>0$ and $\text{div} \ u^{\nu}_{r}= 0$. The above equation is understood in the mild sense (see \ref{Duhamel}), since the terms $\Delta^{5}\omega^{\nu}_{r}$, $\Delta^{7}\theta^{\nu}_{r}$ cannot be made sense of in the classical one. The artificial dissipations have been added in order to make the computations of the higher order terms rigorous (as for instance $\Delta \mathcal{L}^{2}_{\xi_{i}}\omega$ or $\Lambda^{3}\mathcal{L}^{2}_{\xi_{i}}\theta$). The rest of this section is devoted to proving the following result.
\begin{proposition}\label{Global:sol:reg:truncated}
For every $\nu,r >0$, and initial data $(\omega_{0},\theta_{0})\in H^{2}(\mathbb{T}^{2},\mathbb{R})\times H^{3}(\mathbb{T}^{2},\mathbb{R})$, there exists a unique global strong solution ($\omega^{\nu}_{r}, \theta^{\nu}_{r})$ in the class $L^{2}(\Xi ; C([0,T]; H^{2}(\mathbb{T}^{2},\mathbb{R}))\times H^{3}(\mathbb{T}^{2},\mathbb{R})))$, for all $T>0$. Moreover, their paths will gain extra regularity, namely $C([\delta,T]; H^{4}(\mathbb{T}^{2},\mathbb{R})\times H^{5}(\mathbb{T}^{2},\mathbb{R}))$, for every $T>\delta>0$. 
\end{proposition}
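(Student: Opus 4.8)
The strategy is to view \eqref{eq:vor:reg}--\eqref{eq:initia:reg} as a semilinear stochastic evolution equation driven by the analytic semigroup of the hyperdissipative operator, to produce a unique local mild solution by a Banach fixed-point argument, and then to globalise it by means of an a priori $H^{2}\times H^{3}$ bound in which the cutoff $\eta_{r}$ and the cancellation of Proposition \ref{Liecancellations} play the decisive role; the extra smoothness for $t>0$ will then be read off the mild formula. Concretely, I would recast the system in the abstract form \eqref{abstractformulation}--\eqref{abstractinitial} with $A=\mathrm{diag}(\nu\Delta^{5},\nu\Delta^{7})$ on $L^{2}\times L^{2}$ with domain $H^{10}\times H^{14}$, so $A$ is self-adjoint and negative definite and the smoothing bound \eqref{semigroupine} holds with $k=5$ on the vorticity slot and $k=7$ on the temperature slot. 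A solution is then sought in the mild sense,
\[ U(t)=e^{tA}U_{0}-\int_{0}^{t}e^{(t-s)A}\big(BU-GU\big)(s)\,\diff s+\int_{0}^{t}e^{(t-s)A}LU(s)\,\diff s-\sum_{i=1}^{\infty}\int_{0}^{t}e^{(t-s)A}R_{i}U(s)\,\diff B_{s}^{i}, \]
with $U=(\omega^{\nu}_{r},\theta^{\nu}_{r})$ and $BU=(\eta_{r}(\cdot)\mathcal{L}_{u}\omega,\eta_{r}(\cdot)\mathcal{L}_{u}\theta)$, $GU=(\partial_{x}\theta,0)$, $LU=\tfrac12\big(\sum_{i}\mathcal{L}^{2}_{\xi_{i}}\omega,\sum_{i}\mathcal{L}^{2}_{\xi_{i}}\theta\big)$, $R_{i}U=(\mathcal{L}_{\xi_{i}}\omega,\mathcal{L}_{\xi_{i}}\theta)$.

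\emph{Step 1: local existence and uniqueness.} On the Banach space $\mathcal{Y}_{T}$ of progressively measurable processes with $\mathbb{E}\sup_{[0,T]}\big(\norm{\omega}_{H^{2}}^{2}+\norm{\theta}_{H^{3}}^{2}\big)<\infty$ I would run a Picard iteration for the map $\mathcal{T}$ given by the right-hand side above. The ingredients are: by the Biot--Savart estimate \eqref{BiotIne}, $\norm{u}_{H^{3}}\lesssim\norm{\omega}_{H^{2}}$, hence $\mathcal{L}_{u}\omega=u\cdot\nabla\omega$ and $\mathcal{L}_{u}\theta=u\cdot\nabla\theta$ lie in $L^{2}$ with norms bounded, locally Lipschitz-ly, by $\norm{\omega}_{H^{2}}+\norm{\theta}_{H^{3}}$ (using also that $\eta_{r}$ is bounded and Lipschitz); $\norm{GU}_{L^{2}}\lesssim\norm{\theta}_{H^{1}}$; $\norm{LU}_{L^{2}}^{2}\lesssim\norm{\omega}_{H^{2}}^{2}+\norm{\theta}_{H^{2}}^{2}$ by \eqref{assumption1}; and $\sum_{i}\norm{R_{i}U}_{L^{2}}^{2}\lesssim\norm{\omega}_{H^{2}}^{2}+\norm{\theta}_{H^{2}}^{2}$ by \eqref{assumption2}. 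Feeding these into the deterministic and stochastic convolution estimates of Lemma \ref{estimation:semigroup} — with $(\beta,k)=(2,5)$ on the $\omega$-component and $(\beta,k)=(3,7)$ on the $\theta$-component, both exponents $2-\beta/k$ being positive — yields $\norm{\mathcal{T}U_{1}-\mathcal{T}U_{2}}_{\mathcal{Y}_{T}}\le C(\nu,r,R)\,T^{\gamma}\norm{U_{1}-U_{2}}_{\mathcal{Y}_{T}}$ with $\gamma>0$ on the ball of radius $R$, together with the matching self-mapping bound. Choosing $T=T_{0}$ small, $\mathcal{T}$ is a contraction, giving a unique local mild solution, with $T_{0}$ depending on the data only through $R$.

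\emph{Step 2: globalisation.} Because $T_{0}$ is uniform over balls, it suffices to prove that for every finite $T$, $\mathbb{E}\sup_{[0,T]}\big(\norm{\omega^{\nu}_{r}(t)}_{H^{2}}^{2}+\norm{\theta^{\nu}_{r}(t)}_{H^{3}}^{2}\big)<\infty$. I would apply It\^o's formula to $\norm{\omega}_{L^{2}}^{2}+\norm{\Lambda^{2}\omega}_{L^{2}}^{2}+\norm{\theta}_{L^{2}}^{2}+\norm{\Lambda^{3}\theta}_{L^{2}}^{2}$ (legitimately, thanks to the $H^{4}\times H^{5}$ regularity of Step 3, or after a Fourier cutoff later removed). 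The hyperdissipation contributes $-\nu$ times a nonnegative quantity and is discarded; the transport terms $\eta_{r}(\cdot)\la\Lambda^{2}(\mathcal{L}_{u}\omega),\Lambda^{2}\omega\ra$ and $\eta_{r}(\cdot)\la\Lambda^{3}(\mathcal{L}_{u}\theta),\Lambda^{3}\theta\ra$ are handled by standard commutator/product inequalities and \eqref{BiotIne}, the cutoff capping the transporting coefficient so that they are $\lesssim C(r)\big(1+\norm{\omega}_{H^{2}}^{2}+\norm{\theta}_{H^{3}}^{2}\big)$; the coupling term $\la\Lambda^{2}\partial_{x}\theta,\Lambda^{2}\omega\ra$ is $\lesssim\norm{\omega}_{H^{2}}^{2}+\norm{\theta}_{H^{3}}^{2}$. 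The delicate contributions are the top-order stochastic ones, $\sum_{i}\la\Lambda^{2}\mathcal{L}^{2}_{\xi_{i}}\omega,\Lambda^{2}\omega\ra+\la\Lambda^{2}\mathcal{L}_{\xi_{i}}\omega,\Lambda^{2}\mathcal{L}_{\xi_{i}}\omega\ra$ and the analogous $\Lambda^{3}$ expression for $\theta$: each summand is individually of order $\norm{\omega}_{H^{3}}^{2}$, resp.\ $\norm{\theta}_{H^{4}}^{2}$, above the energy level, and only the displayed sums are controlled — this is exactly \eqref{eq:cancellation2} of Proposition \ref{Liecancellations} with $k=2$ and $k=3$. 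Taking expectations, estimating the martingale part with the Burkholder--Davis--Gundy inequality \eqref{BGD:ineq} (its quadratic variation dominated by the same energy via \eqref{assumption2}), and invoking Gr\"onwall's lemma gives the bound; the local solution therefore does not explode and extends to $[0,T]$ for all $T>0$, while uniqueness follows from Step 1 patched along $[0,T]$ (equivalently, by rerunning the difference estimate of Proposition \ref{prop:uniqueness}).

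\emph{Step 3 and the main obstacle.} For $t>0$ the solution lies in $H^{4}\times H^{5}$ by parabolic smoothing: given the $C([0,T];H^{2}\times H^{3})$ solution, $BU,GU,LU\in L^{2}$ and $\sum_{i}\norm{R_{i}U}_{L^{2}}^{2}<\infty$ with finite expectation of the time supremum by Step 2; reading off the mild formula, $e^{tA}U_{0}\in H^{4}\times H^{5}$ for $t\ge\delta$ with constant $\lesssim\delta^{-\alpha}$ by \eqref{semigroupine}, and the deterministic and stochastic convolutions reach $H^{4}$, resp.\ $H^{5}$, directly by Lemma \ref{estimation:semigroup} with $\beta=4<5=k$ on the first slot and $\beta=5<7=k$ on the second; splitting $\int_{0}^{t}=\int_{0}^{\delta/2}+\int_{\delta/2}^{t}$ and using that the first piece is infinitely smoothing for $t-s\ge\delta/2$ yields continuity in time, hence trajectories in $C([\delta,T];H^{4}\times H^{5})$. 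The hard part is the a priori estimate of Step 2, and within it the top-order stochastic terms: without the cancellation of Proposition \ref{Liecancellations} the energy balance would not close, since $\mathcal{L}^{2}_{\xi_{i}}$ is genuinely of second order, and even with it one must guarantee that the high-order quantities $\Lambda^{2}\mathcal{L}^{2}_{\xi_{i}}\omega$, $\Lambda^{3}\mathcal{L}^{2}_{\xi_{i}}\theta$ are well defined — which is the very reason the hyper-regularisations $\nu\Delta^{5}$, $\nu\Delta^{7}$ were inserted, and why one ultimately carries out the It\^o computation with the smoother $H^{4}\times H^{5}$ solution (or with a Fourier truncation, removed afterwards).
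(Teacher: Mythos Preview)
Your proposal is correct and matches the paper's architecture in Steps 1 and 3 (mild formulation, contraction in $L^{2}(\Xi;C([0,T];H^{2}\times H^{3}))$, parabolic smoothing via \eqref{semigroupine} and Lemma \ref{estimation:semigroup}). The one genuine difference is the globalisation step. The paper does \emph{not} use It\^o's formula and the cancellation of Proposition \ref{Liecancellations} here; instead it reads the a priori bound
\[
\sup_{t\in[0,T]}\mathbb{E}\big[\norm{U(t)}_{H^{2}\times H^{3}}^{2}\big]\le C(T,\nu)
\]
directly off the mild formula, applying the semigroup estimates of Lemma \ref{estimation:semigroup} term by term and closing with a singular Gr\"onwall argument \`a la Henry. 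This is shorter and sidesteps the circularity you yourself flag (needing the $H^{4}\times H^{5}$ regularity of Step 3, or a Fourier cutoff, to make sense of $\Lambda^{2}\mathcal{L}^{2}_{\xi_{i}}\omega$ and $\Lambda^{3}\mathcal{L}^{2}_{\xi_{i}}\theta$ in the It\^o computation). The price is that the constant depends on $\nu$. Your energy-method route is exactly what the paper does later, in Lemma \ref{lemma:est:apriori:full}, where the $\nu$-uniform bound is actually needed for the tightness argument. So both approaches are valid; the paper separates concerns --- a cheap $\nu$-dependent globalisation at this stage, and the harder $\nu$-independent estimate deferred to Section \ref{comp:lim:proc} --- whereas you effectively front-load the latter.
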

\begin{proof}[Proof of Proposition \ref{Global:sol:reg:truncated}]
The proof consists in first constructing a local solution via a fix point iteration argument. After showing a proper a priori estimate, we will be able to infer that this solution can be extended to a global one. For the sake of exposition clarity, the proof will be divided into several steps. We will also omit the subscripts $\nu$ and $r$ throughout the proof.   \\ \\
\textsl{Step 1: Formulation of the mild equation.} Given $U_{0}=(\omega_{0},\theta_{0})\in L^{2}(\Xi; H^{2}(\mathbb{T}^{2},\mathbb{R})\times H^{3}(\mathbb{T}^{2},\mathbb{R}))$, consider the mild truncated formulation equation 
\[U(t)= (\Upsilon U)(t),\]
where
\begin{equation*}
 (\Upsilon U)(t)=e^{tA}U_{0}-\int
_{0}^{t}e^{\left(  t-s\right)  A}(B_{\eta}(U)(s)-GU(s)) \
\diff  s +\int_{0}^{t}e^{\left(  t-s\right)  A} LU(s) \ \diff  s-
\displaystyle\sum_{i=1}^\infty \int_{0}^{t}e^{\left(  t-s\right)  A}R_{i}U(s) \ \diff  B_{s}^{i},
\end{equation*}
where $A:=(\nu\Delta^{5},\nu \Delta^{7}),$ $B_{\eta}U:=\left(\eta(||\nabla u||_{L^\infty})(u\cdot\nabla)\omega,\eta(||\nabla \theta||_{L^\infty})(u\cdot\nabla)\theta\right),$ $GU:=(\partial_{x}\theta,0)$, \\ $LU:= \left( \dfrac{1}{2}\displaystyle\sum_{i=1}^{\infty} \mathcal{L}^{2}_{\xi_{i}}\omega,\dfrac{1}{2}\displaystyle\sum_{i=1}^{\infty} \mathcal{L}^{2}_{\xi_{i}}\theta \right),$ and $R_{i}U:= \left( \mathcal{L}_{\xi_{i}}\omega,\mathcal{L}_{\xi_{i}}\theta \right)$. \\ \\
\textsl{Step 2: Construction of the local solution.} Let  $\mathcal{W}_{T}=L^{2}(\Xi; C([0,T];H^{2}(\mathbb{T}^{2},\mathbb{R})\times H^{3}(\mathbb{T}^{2},\mathbb{R})))$.  Since we want to prove that the map $\Upsilon$ is a contraction on the space $\mathcal{W}_{T}$, first we need to check that the map $\Upsilon$ applied to an element of $\mathcal{W}_{T}$ returns indeed an element of the same space. So let $U \in \mathcal{W}_T$. We check the different terms: \\
\begin{itemize}[leftmargin=*]
\item $e^{tA}$ is bounded in the spaces $H^{2}(\mathbb{T}^{2},\mathbb{R})$ and $H^{3}(\mathbb{T}^{2},\mathbb{R})$. Therefore $e^{tA}U_{0}$ is in $\mathcal{W}_{T}$. \\
\item The operator $B_{\eta}(U)$ is in $L^{2}(\Xi;C([0,T];L^{2}(\mathbb{T}^{2},\mathbb{R})\times L^{2}(\mathbb{T}^{2},\mathbb{R})))$ since the map $U\to B_{\eta}(U)$ from $H^{2}(\mathbb{T}^{2},\mathbb{R})\times H^{3}(\mathbb{T}^{2},\mathbb{R})$ to $L^{2}(\mathbb{T}^{2},\mathbb{R})\times L^{2}(\mathbb{T}^{2},\mathbb{R})$ is Lipschitz continuous. Hence by applying estimate \eqref{estimation:semigroup1}, we obtain that	
\[ \int_{0}^{t} e^{(t-s)A}B_{\eta}(U)(s) \ \diff s \in\mathcal{W_{T}}. \] \\
\item Since the operator $G$ satisfies
\[ ||GU||_{L^{2}} \lesssim ||U||_{H^1}, \]
by the same argument as above it is straightforward to infer that
\[ \int_{0}^{t} e^{(t-s)A} GU(s) \ \diff  s\in \mathcal{W}_{T}. \] \\
\item By condition \eqref{assumption1}, $\displaystyle\sum_{i=1}^{\infty} \mathcal{L}^{2}_{\xi_{i}}U\in L^{2}(\Xi; C([0,T];L^{2}(\mathbb{T}^{2},\mathbb{R})\times L^{2}(\mathbb{T}^{2},\mathbb{R})))$, so again by estimate (\ref{estimation:semigroup1}) we have
\[ \int_{0}^{t} e^{(t-s)A} LU(s) \ \diff s \in\mathcal{W}_{T}. \] \\
\item To manipulate the stochastic term  $R_{i}U$, we just need to combine \eqref{assumption2},
\[ \displaystyle\sum_{i=1}^{\infty} || \mathcal{L}_{\xi_{i}} U||^{2}_{L^{2}} \lesssim  ||U||^{2}_{H^{2}}, \]
with estimate (\ref{estimation:semigroup2}) to get 
\[ \sum_{i=1}^\infty \int_{0}^{t} e^{(t-s)A} R_{i}U(s) \ \diff  B^{i}_{s} \in\mathcal{W}_{T}. \]  \\
\end{itemize}
Checking  the Lipschitz continuity of the map $\Upsilon$ in $\mathcal{W}_{T}$ is a tedious but simple task which is left for the interested reader. \\ \\
\textsl{Step 3: From local to global. A priori estimate.} It is clear from the above construction that the lifespan of the solution $U$ depends only on the norm of the initial condition $U_{0}$ in $L^{2}(\Xi; H^{2}(\mathbb{T}^{2},\mathbb{R})\times H^{3}(\mathbb{T}^{2},\mathbb{R}))$ . Therefore, in order to extend the solution to a global one, it is sufficient to show that for a given $T>0$ and initial $U_{0}$, we have that
\begin{equation}\label{eq:assertion}
\sup_{t\in[0,T]} \mathbb{E}\left[ ||U(t)||^{2}_{H^{2}\times H^{3}} \right] \leq C(T).
\end{equation}
 If this holds true, we could patch together each local solution to cover any time interval. Hence, let us prove that this a priori estimate is indeed satisfied. Taking into account each term in the equation $U(t)=(\Upsilon U)(t)$, we derive
\begin{align*}
\mathbb{E}\left[  \left\Vert U(t)  \right\Vert _{H^{2}\times H^{3}}^{2}\right]
&  \lesssim \mathbb{E}\left[  \left\Vert e^{tA}U_{0}\right\Vert _{H^{2}\times H^{3}}
^{2}\right]  +\mathbb{E}\left[  \left\Vert \int_{0}^{t}e^{\left(  t-s\right)
A}\left(B_{\eta} (U)(s)-GU(s)\right) \ \diff  s\right\Vert _{H^{2}\times H^{3}}^{2}\right] \\
&  +\mathbb{E}\left[  \left\Vert \int_{0}^{t}e^{\left(  t-s\right)  A}LU(s) \ \diff  s\right\Vert _{H^{2}\times H^{3}}^{2}\right]  +\mathbb{E}\left[  \left\Vert \sum_{i=1}^\infty \int_{0}^{t}e^{\left(  t-s\right)  A} R_{i}U(s)  \ \diff  B^{i}_{s} \right\Vert _{H^{2}\times H^{3}}^{2}\right].
\end{align*}

Applying estimates (\ref{estimation:semigroup1})  and (\ref{estimation:semigroup2}), together with \eqref{assumption3}-\eqref{assumption2}, we obtain
\[
\mathbb{E}\left[  \left\Vert U(t)  \right\Vert _{H^{2}\times H^{3}}^{2}\right] 
\lesssim \mathbb{E}\left[  \left\Vert U_{0}\right \Vert _{H^{2}\times H^{3}}^{2}\right]
+ \mathbb{E}\left[  \int_{0}^{t} \displaystyle\max \left(\frac{1}{(t-s)^{2/5}},\frac{1}{(t-s)^{3/6}}\right) \left\Vert
U(s) \right\Vert _{H^{2}\times H^{3}}^{2} \diff  s\right].
\]
Assertion \eqref{eq:assertion} is concluded by performing a variation of a Gr\"onwall's type argument (cf. \cite{Henry}).\\ \\
\textsl{Step 4: Higher regularity}. By using general properties of the semigroup $e^{tA}$ (cf. \cite{Goldestein,Pazy}), one can prove that for positive times $T>\delta>0$, each term in the mild equation enjoys higher regularity, namely, $U\in L^{2}(\Xi; C([\delta,T];H^{4}(\mathbb{T}^2, \mathbb{R})\times H^{5}(\mathbb{T}^2, \mathbb{R}))),$ for every $T>\delta>0$. 
\end{proof}

\newpage
\section{Compactness argument}\label{comp:lim:proc}
In this section, we will show that the family of solutions $\{ (\omega_r^\nu, \theta_r^\nu) \}_{\nu>0}$ to the hyper-regularised equations \eqref{eq:vor:reg}-\eqref{eq:theta:reg} is compact in some sense, which will enable us to extract a subsequence converging strongly to a solution of the truncated stochastic Boussinesq system \eqref{eq:vor:tru}-\eqref{eq:theta:tru}. The central idea for proving this is to show compactness of the probability laws of this family. Consequently, we have to demonstrate that these laws are tight in a suitable metric space. Before proceeding any further, let us first give a glimpse of the steps we will be following.  \\ \\
Let $T>0$ and define the Polish space $E$ by
\begin{equation}\label{polish}
E = L^2 ([0,T];H^{\beta}(\mathbb{T}^{2},\mathbb{R}) \times H^{\beta}(\mathbb{T}^{2},\mathbb{R}) ) \cap C_w([0,T];H^{2}(\mathbb{T}^{2},\mathbb{R}) \times H^{3}(\mathbb{T}^{2},\mathbb{R})), \quad \beta > 1.
\end{equation}
Assume that the laws of $\{(\omega^\nu_r, \theta^\nu_r)\}_{\nu>0}$ are tight in $E.$ Then, Theorem \ref{prohorov} can be applied and one can extract a weakly converging subsequence, which we will denote by $\{(\omega^{1/n}_r, \theta^{1/n}_r)\}_{n \in \mathbb{N}}$ without loss of generality. However, we need a stronger type of convergence so that we can take limits properly. To this purpose we use the Gy\"ongy-Krylov Lemma \ref{strong}, which guarantees that the sequence $\{(\omega^{1/n}_r, \theta^{1/n}_r)\}_{n \in \mathbb{N}}$ converges in probability if some diagonal assumption holds. This latter assumption can be checked by applying Theorem \ref{rep} to the sequence $\{(\omega^{1/n}_r, \theta^{1/n}_r)\}_{n \in \mathbb{N}}$ and finding a probability space $(\Omega, \mathcal{A}, \mathbb{P})$ and a family of random measures $\{ ({\omega'}_r^{1/n}, {\theta'}_r^{1/n}) \}_{n \in \mathbb{N}},$ where the convergence is almost surely. Moreover, the laws of $\{ ({\omega'}_r^{1/n}, {\theta'}_r^{1/n}) \}_{n \in \mathbb{N}}$ can be easily shown to satisfy equations \eqref{eq:vor:reg}-\eqref{eq:theta:reg} weakly. Although we have not gone into extreme detail at some of the points in this paragraph, this is based on standard and classical stochastic partial differential equations arguments. A more exhaustive explanation can be found in \cite{CriHolFla},\cite{GlaVic},\cite{GlaZia}.\\ 

\subsubsection*{Passage to the limit.} We know that $\{ ({\omega'}_r^{1/n}, {\theta'}_r^{1/n}) \}_{n \in \mathbb{N}}$ converges almost surely in $E$ as $n \rightarrow \infty.$ Let us denote its limit by $({\omega'}_r, {\theta'}_r )$. We claim that $({\omega'}_r, {\theta'}_r )$ satisfies equations \eqref{eq:vor:tru}-\eqref{eq:theta:tru} in the weak sense explained in Definition \ref{localsol}. We will show this now and later we will take charge of proving this limit is actually regular enough so the equations are satisfied in the strong sense. We integrate against test functions and take limits as $n \rightarrow \infty$ on the equations
\makeatletter
 \def\@eqnnum{{\normalsize \normalcolor (\theequation)}}
  \makeatother
 { \small	
\begin{eqnarray*}
\diff  \omega^{1/n}_{r} + \eta_{r}(||\nabla u^{1/n}||_{L^{\infty}})\mathcal{L}_{u^{1/n}_{r}} \omega^{1/n}_{r} \diff t + \displaystyle\sum_{i=1}^{\infty}  \mathcal{L}_{\xi_{i}} \omega^{1/n}_{r} \diff  B^{i}_{t} &=& (1/n) \Delta^{5} \omega^{1/n}_{r} \diff t + \frac{1}{2} \displaystyle\sum_{i=1}^{\infty}  \mathcal{L}^{2}_{\xi_{i}}\omega^{1/n}_{r} \diff t + \partial_{x} \theta^{1/n}_{r} \diff t,  \\
 \diff  \theta^{1/n}_{r} + \eta_{r}(||\nabla \theta^{1/n}||_{L^{\infty}}) \mathcal{L}_{u^{1/n}_{r}} \theta^{1/n}_{r} \diff t + \displaystyle\sum_{i=1}^{\infty}  \mathcal{L}_{\xi_{i}}\theta^{1/n}_{r} \diff  B^{i}_{t} &=& (1/n) \Delta^{7} \theta^{1/n}_{r} \diff t + \frac{1}{2} \displaystyle\sum_{i=1}^{\infty}  \mathcal{L}^{2}_{\xi_{i}}\theta^{1/n}_{r} \diff t.
\end{eqnarray*}}
Let us analyse each term carefully: \\
\begin{itemize}[leftmargin=*]
\item It is straightforward to check that the term $\partial_{x}\theta^{1/n}_{r}$ converges to $\partial_{x}\theta_{r},$ weakly as $n \rightarrow \infty.$ \\
\item The viscosity term $(1/n)\Delta^{5}\omega^{1/n}_{r}$. Indeed, note that $(1/n) \Delta^{5} \omega^{1/n}_{r}$ converges weakly to zero (as $n  \rightarrow \infty$), by using pathwise convergence in  
\[L^2([0,T];L^2(\mathbb{T}^{2},\mathbb{R})) \supset L^2 ([0,T];H^{\beta}(\mathbb{T}^{2},\mathbb{R})) \] plus the embedding $H^{\beta}(\mathbb{T}^{2},\mathbb{R}) \subset C(\mathbb{T}^{2},\mathbb{R}),$ which implies that $\omega_r^{1/n}$ is equibounded, i.e. we can apply bounded convergence theorem. \\
\item{The single Lie-derivative term $\mathcal{L}_{\xi_i} \omega_r^{1/n}.$ First note that
$$\left \langle \sum_{i=1}^\infty \mathcal{L}_{\xi_i} \omega_r^{1/n} - \sum_{i=1}^\infty \mathcal{L}_{\xi_i} \omega_r , \phi \right \rangle_{L^2}$$
$$\leq ||\nabla \phi ||_{L^\infty} \left( \sum_{i=1}^{\infty} ||\xi_i||_{L^2} \right)||\omega_r^{1/n} - \omega_r||_{L^2} \rightarrow 0,$$ 
by using integration by parts, the pathwise convergence in $L^2([0,T];L^2(\mathbb{T}^{2},\mathbb{R}))$ and assumption (\ref{assumption3}). Then apply bounded convergence theorem. \\}
\item{The double Lie-derivative terms $\mathcal{L}^2_{\xi_i} \omega_r^{1/n}$.} Note 
\[ \left \langle \sum_{i=1}^\infty \mathcal{L}^2_{\xi_i} \omega_r^{1/n} - \sum_{i=1}^\infty \mathcal{L}^2_{\xi_i} \omega_r, \phi \right \rangle_{L^2}
= \left \langle  \omega_r^{1/n} - \omega_r, \sum_{i=1}^\infty \mathcal{L}^2_{\xi_i} \phi \right \rangle_{L^2}, \]
since $\mathcal{L} = - \mathcal{L}^*$ (where $\mathcal{L}^*$ denotes the adjoint operator of $\mathcal{L}$ under the $L^2$ pairing). Use again convergence in $L^2([0,T];L^2(\mathbb{T}^{2},\mathbb{R}))$ and assumption (\ref{assumption3}). Then apply bounded convergence theorem. \\ 
\item{The nonlinear term. One needs to show
\begin{eqnarray} \label{weaknon}
\int_0^T \int_{\mathbb{T}^2} \eta_{r}(||\nabla u^{1/n}||_{L^{\infty}})  \mathcal{L}_{u^{1/n}_{r}} \omega^{1/n}_{r} \phi \diff V \diff s \rightarrow \int_0^T \int_{\mathbb{T}^2} \eta_{r}(||\nabla u||_{L^{\infty}}) \mathcal{L}_{u_{r}} \omega_{r} \phi \diff V \diff s, \quad a.s.
\end{eqnarray}
First note 
$$\left \langle \mathcal{L}_{u^{1/n}_{r}} \omega^{1/n}_{r} ,  \phi  \right \rangle_{L^2}  = - \left \langle  \omega^{1/n}_{r} ,  \mathcal{L}_{u^{1/n}_{r}} \phi \right \rangle_{L^2},$$
so
$$ \left |\left \langle \mathcal{L}_{u^{1/n}_{r}} \omega^{1/n}_{r} - \mathcal{L}_{u_{r}} \omega_{r},  \phi  \right \rangle_{L^2} \right |$$
$$ \leq \left |\left \langle \mathcal{L}_{(u^{1/n}_{r} - u_r)} \omega^{1/n}_{r},  \phi  \right \rangle_{L^2} \right |
+  \left |\left \langle \mathcal{L}_{u_r} ( \omega^{1/n}_{r} - \omega_{r}),  \phi  \right \rangle_{L^2} \right |$$
$$ = \left |\left \langle  \omega^{1/n}_{r},  \mathcal{L}_{(u^{1/n}_{r} - u_r)} \phi  \right \rangle_{L^2} \right |
+  \left |\left \langle   \omega^{1/n}_{r} - \omega_{r}, \mathcal{L}_{u_r} \phi  \right \rangle_{L^2} \right | \rightarrow 0,$$
as $n \rightarrow \infty,$ since convergence of $\omega^{1/n}_r$ in $L^2([0,T]; L^2(\mathbb{T}^{2},\mathbb{R}))$ to $\omega_r$ implies convergence of $u^{1/n}_r$ in $L^2([0,T];H^{1}(\mathbb{T}^{2},\mathbb{R}))$ to $u_r.$ By the bounded convergence theorem, this convergence can be extended to $L^1([0,T],\mathbb{R}).$ Also note that
$$\left| \left| \eta_{r}(||\nabla u^{1/n}||_{L^{\infty}})  \mathcal{L}_{u^{1/n}_{r}} \omega^{1/n}_{r} \right | \right|_{L^2} 
\leq \left | \left| \eta_{r}(||\nabla u^{1/n}||_{L^{\infty}}) \right| \right|_{L^\infty}  \left | \left |\mathcal{L}_{u^{1/n}_{r}} \omega^{1/n}_{r} \right | \right|_{L^2} . $$ 
Therefore, to show (\ref{weaknon}), we only need bounded convergence theorem. Let us verify the assumptions. \\ 
\begin{enumerate}
\item{Since $L^2 ([0,T];H^{\beta}(\mathbb{T}^{2},\mathbb{R})) \cap C_w([0,T];H^{2}(\mathbb{T}^{2},\mathbb{R})) \subset C_w([0,T]; C(\mathbb{T}^{2},\mathbb{R})),$ 
the sequence
$$\left \langle (\omega^{1/n}_{r} - \omega_{r}), \mathcal{L}_{(u^{1/n}_{r} - u_r)} \phi \right \rangle_{L^2} $$ 
is equibounded in $L^2([0,T], \mathbb{R}),$ $\mathbb{P}$ almost surely. \\}
\item{We are left to prove $\eta_{r}(||\nabla u^{1/n}||_{L^{\infty}}) \rightarrow \eta_{r}(||\nabla u||_{L^{\infty}}),$ as $n \rightarrow \infty,$ in probability with respect to time (note that these functions are bounded by one). For this convergence, use that $\omega_r^{1/n}$ converges to $\omega_r$ strongly with respect to time since $L^2([0,T]; H^{\beta}(\mathbb{T}^{2},\mathbb{R})) \subset L^2([0,T];C(\mathbb{T}^{2},\mathbb{R})).$ Moreover, since $\eta_r$ is bounded continuous, also $\eta_r (||\nabla u^{1/n}||_{L^{\infty}})$ converges in $L^2([0,T]; C(\mathbb{T}^{2},\mathbb{R})).$ Finally, strong convergence implies convergence in probability. Hence, bounded convergence can be applied and (\ref{weaknon}) is guaranteed.}
\end{enumerate}}
\end{itemize}\vspace{0.5cm}
Thus, we have shown that solutions of equation \eqref{eq:vor:reg} converge to solutions of equation \eqref{eq:vor:tru} in the weak limit on $E.$ By an almost identical procedure, we can also show that solutions of equation \eqref{eq:theta:reg} converge to solutions of its nonregularised version, namely equation \eqref{eq:theta:tru}, as $n \rightarrow \infty.$ As we pointed before, the first step to carry out all these arguments is to show that the laws of 
$\{(\omega_r^\nu, \theta_r^\nu)\}_{\nu>0}$ are tight in the Polish space $E$. The following result asserts that this holds true if some estimates are satisfied.
\begin{proposition} \label{compacity}
Assume that for some $\alpha >0$ and $N, N^{\star} \in \mathbb{N},$ there exist constants $C_1, C_2,$ such that
\begin{equation}\label{tight1}
\quad \mathbb{E} \left [ \sup_{t \in [0,T]} ||\omega_r^\nu (t)||^2_{H^{2}} \right ] +\mathbb{E} \left [ \sup_{t \in [0,T]} ||\theta_r^\nu(t)||^2_{H^{3}} \right ] \leq C_1, 
\end{equation}
\begin{equation}\label{tight2}
 \quad \mathbb{E} \left [ \int_0^T \int_0^T \frac{||\omega_r^\nu (t) - \omega_r^\nu(s)||^2_{H^{-N}}}{|t-s|^{1+2\alpha}} \diff t \diff s \right ] + \mathbb{E} \left [ \int_0^T \int_0^T \frac{||\theta_r^\nu (t) - \theta_r^\nu(s)||^2_{H^{-N^{\star}}}}{|t-s|^{1+2\alpha}} \diff t \diff s \right ] \leq C_2,
\end{equation}
 uniformly in $\nu$.  Then $\{(\omega_r^\nu, \theta_r^\nu)\}_{\nu>0}$ is tight in the Polish space $E$ defined in \eqref{polish}.
\end{proposition}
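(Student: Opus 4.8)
The strategy is the standard one for establishing tightness of laws of SPDE solutions: for each $\varepsilon>0$ I would exhibit a single compact set $K_\varepsilon\subset E$ carrying at least $1-\varepsilon$ of the mass of \emph{every} law $\{(\omega_r^\nu,\theta_r^\nu)\}_{\nu>0}$, obtained by feeding the uniform bounds \eqref{tight1}--\eqref{tight2} into a compact embedding and then into Markov's inequality (Lemma \ref{desigualdad}). Concretely, for $R>0$ I set
\begin{equation*}
K_R:=\Big\{(\omega,\theta):\ \|\omega\|_{L^\infty([0,T];H^{2})}+\|\theta\|_{L^\infty([0,T];H^{3})}\le R,\ \ \|\omega\|_{W^{\alpha,2}([0,T];H^{-N})}+\|\theta\|_{W^{\alpha,2}([0,T];H^{-N^{\star}})}\le R\Big\}
\end{equation*}
and prove that its closure in $E$ is compact. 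For the factor $L^{2}([0,T];H^{\beta}\times H^{\beta})$ this is precisely Lemma \ref{compactlemma}, applied to the triples $H^{2}\subset H^{\beta}\subset H^{-N}$ and $H^{3}\subset H^{\beta}\subset H^{-N^{\star}}$: these are reflexive Hilbert spaces, the embedding $H^{2}\hookrightarrow H^{\beta}$ is compact because $\beta<2$ on the compact manifold $\mathbb{T}^{2}$, and $K_R$ is bounded in $L^{2}([0,T];H^{2})\cap W^{\alpha,2}([0,T];H^{-N})$ (and analogously for $\theta$). For the factor $C_w([0,T];H^{2}\times H^{3})$ one invokes the companion Arzel\`a--Ascoli statement in the weak topology used in \cite{CriHolFla,GlaVic,GlaZia}: a set bounded in $L^\infty([0,T];H^{2})$ and in $W^{\alpha,2}([0,T];H^{-N})$ is relatively compact in $C_w([0,T];H^{2})$, the two bounds together forcing weak equicontinuity of the trajectories (one interpolates the $H^{-N}$-continuity control against the uniform $H^{2}$-bound). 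Intersecting the two statements gives the relative compactness of $K_R$ in $E$.

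It then remains to bound $\mathbb{P}\big((\omega_r^\nu,\theta_r^\nu)\notin \overline{K_R}\big)$ uniformly in $\nu$. By Markov's inequality and \eqref{tight1},
\begin{equation*}
\mathbb{P}\Big(\|\omega_r^\nu\|_{L^\infty([0,T];H^{2})}^2+\|\theta_r^\nu\|_{L^\infty([0,T];H^{3})}^2>R^{2}/4\Big)\lesssim \frac{C_1}{R^{2}},
\end{equation*}
while \eqref{tight1}--\eqref{tight2} together control $\mathbb{E}\big[\|\omega_r^\nu\|_{W^{\alpha,2}([0,T];H^{-N})}^2+\|\theta_r^\nu\|_{W^{\alpha,2}([0,T];H^{-N^{\star}})}^2\big]$ uniformly in $\nu$ (since the double-integral seminorm plus the $L^2([0,T];H^{-N})$-norm, itself dominated by \eqref{tight1}, make up the full $W^{\alpha,2}$-norm), so a second application of Markov yields a bound $\lesssim C_2/R^{2}$ for the probability that the fractional-in-time norm exceeds $R^{2}/4$. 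Combining, $\mathbb{P}\big((\omega_r^\nu,\theta_r^\nu)\notin \overline{K_R}\big)\lesssim (C_1+C_2)/R^{2}$ uniformly in $\nu$; choosing $R=R(\varepsilon)$ so that the right-hand side is below $\varepsilon$ and setting $K_\varepsilon:=\overline{K_R}$ proves that the family of laws is tight in $E$.

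The only point requiring genuine care is the relative compactness in the $C_w$-factor: Lemma \ref{compactlemma} handles the $L^2$-in-time component off the shelf, but the weakly-continuous-in-time component needs the separate weak-topology Arzel\`a--Ascoli lemma and, implicitly, the fact that $E$ is Polish on the bounded sets in play (the weak topology on bounded subsets of the separable Hilbert spaces $H^{2},H^{3}$ is metrizable, so $\overline{K_R}$ is a genuine compact metric space). Everything else — the choice of $\beta\in(1,2)$ and of $N,N^{\star}\in\mathbb{N}$, and the two invocations of Markov's inequality — is routine bookkeeping.
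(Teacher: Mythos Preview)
Your proposal is correct and follows essentially the same route as the paper: build a candidate compact set from the uniform bounds, invoke the Aubin--Lions Lemma \ref{compactlemma} with the triple $H^{2}\subset H^{\beta}\subset H^{-N}$ (and the analogous $\theta$-triple), and then use Markov's inequality to bound the probability of the complement uniformly in $\nu$.

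The one genuine difference is that you are more careful than the paper about the $C_w([0,T];H^{2}\times H^{3})$ factor of $E$. The paper simply asserts that its set $B_{R_1,\ldots,R_6}$ is ``compact in $L^{2}([0,T];H^{\beta}\times H^{\beta})$ and therefore in $E$'' without further comment, whereas you correctly point out that Lemma \ref{compactlemma} only delivers compactness in the $L^{2}$-in-time factor, and that the weakly-continuous-in-time factor requires a separate weak Arzel\`a--Ascoli argument (interpolating the fractional $H^{-N}$-continuity against the uniform $H^{2}$-bound). This is the right way to close the argument, and your discussion of metrizability of the weak topology on bounded sets is also to the point. So your version is slightly more complete than the paper's on this issue; otherwise the two proofs coincide.
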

\begin{proof}[Proof of Proposition \ref{compacity}]
By applying Lemma \ref{compactlemma} with $X=H^{2}(\mathbb{T}^{2},\mathbb{R})\times H^{3}(\mathbb{T}^{2},\mathbb{R}), Y=H^{\beta}(\mathbb{T}^{2},\mathbb{R})\times H^{\beta}(\mathbb{T}^{2},\mathbb{R}), Z= H^{-N}(\mathbb{T}^{2},\mathbb{R})\times H^{-N^{\star}}(\mathbb{T}^{2},\mathbb{R}),$ $p=2,$ and $0<\alpha<1$, we deduce that 
\begin{equation}
 E_{0}:= L^{2}([0,T];X )\cap H^{\alpha}([0,T];Z) 
 \hookrightarrow  L^{2}([0,T]; Y)\subset E,
 \end{equation}
for any $1<\beta<2$. We choose this range for $\beta$ to obtain compactness of the embedding of $X$ into $Y$ and so that $Y\subset C(\mathbb{T}^{2},\mathbb{R})$.  Then the family of laws of $\{(\omega_r^\nu, \theta_r^\nu)\}_{\nu>0}$ is supported on the space $E_{0}$ by hypothesis \eqref{tight1}-\eqref{tight2}. All we need to show is that this family is tight in $E$. For $R_{1},\ldots,R_{6}>0$ define the set $B_{R_{1},\ldots,R_{6}}$ as
\begin{eqnarray*}
\bigg\{ (\omega_r^\nu, \theta_r^\nu): \sup_{t \in [0,T]} ||\omega_r^\nu(t)||_{H^{2}}^2  &\leq & R_1, \int_0^T ||\omega_r^\nu (t)||_{H^{-N}}^2 \diff t \leq R_2,  \int_0^T \int_0^T \frac{||\omega_r^\nu (t)-\omega_r^\nu (s)||^2_{H^{-N}}}{|t-s|^{1+2\alpha}} \diff t \diff s \leq R_3 , \\
 \sup_{t \in [0,T]} ||\theta_r^\nu (t)||_{H^{3}}^2 & \leq &   R_4, \int_0^T ||\theta_r^\nu(t)||_{H^{-N^{\star}}}^2 \diff t \leq R_5,  \int_0^T \int_0^T \frac{||\theta_r^\nu(t)-\theta_r^\nu(s)||^2_{H^{-N^{\star}}}}{|t-s|^{1+2\alpha}} \diff t \diff s \leq R_6  \bigg\},
\end{eqnarray*}
which is compact in $L^{2}([0,T]; H^{\beta}(\mathbb{T}^{2},\mathbb{R})\times H^{\beta}(\mathbb{T}^{2},\mathbb{R}))$ and therefore in $E$.   It suffices to prove that for every given $\epsilon,$ there exist $R_{1},\ldots, R_{6}>0$ such that  
\[ \mathbb{P} ((\omega_r^\nu, \theta_r^\nu) \in B^{c}_{R_{1},\ldots,R_{6}}) \leq \epsilon. \]
Invoking Lemma \ref{desigualdad}, we have that
\begin{equation*}
\mathbb{P} \left( \sup_{t \in [0,T]} ||\omega_r^\nu(t)||^2_{H^{2}} > R_1 \right) \leq  \frac{\mathbb{E} \left [ \displaystyle\sup_{t \in [0,T]} ||\omega_r^\nu (t)||^2_{H^{2}} \right] }{R_1} \leq \frac{C}{R_1},
\end{equation*}
and this is smaller than $\epsilon/6$ if we choose $R_{1}$ sufficiently large. Similarly, one can deduce that
\[ \mathbb{P} \left( \int_0^T \int_0^T \frac{||\omega_r^\nu(t)-\omega_r^\nu(s)||_{H^{-N}}}{|t-s|^{1+2\alpha}}  \diff t \diff s > R_3 \right) \leq \frac{C}{R_{3}} \leq \epsilon/6, \]
if $R_{3}$ is large enough. Since $||f(t)||^{2}_{H^{-N}} \lesssim ||f(t)||^{2}_{H^{2}},$ 
\begin{eqnarray*}
\mathbb{P} \left( \int_0^T ||\omega_r^\nu(t)||^2_{H^{-N}} \diff t > R_2  \right) &\leq & \mathbb{P} \left( T \displaystyle\sup_{t\in[0,T]} ||\omega_r^\nu(t)||^2_{H^{-N}} > R_2  \right) \\ 
&\lesssim &  \mathbb{P} \left( T \displaystyle\sup_{t\in[0,T]} ||\omega_r^\nu(t)||^2_{H^{2}} > R_2  \right)\leq \frac{C}{R_{2}},
\end{eqnarray*}
which can also be made arbitrarily small by choosing $R_2$ large enough. Identical procedure applies to the sets
\[ \mathbb{P} \left( \sup_{t \in [0,T]} ||\theta_r^\nu(t)||^2_{H^{3}} > R_4 \right),  \mathbb{P} \left( \int_0^T ||\theta_r^\nu(t)||^2_{H^{-N^{\star}}} \diff t > R_5  \right),  \mathbb{P} \left( \int_0^T \int_0^T \frac{||\theta_r^\nu(t)-\theta_r^\nu(s)||_{H^{-N^{\star}}}}{|t-s|^{1+2\alpha}} \diff t \diff s > R_6 \right),\]
thanks to Lemma \ref{desigualdad} and hypothesis \eqref{tight1}-\eqref{tight2}. We conclude that there exist large enough $R_{1},\ldots,R_{6}>0$ such that
\[  \mathbb{P} ((\omega_r^\nu, \theta_r^\nu) \in B^{c}_{R_{1},\ldots,R_{6}}) \leq \epsilon \]
as required.
\end{proof}
After having proven Proposition \ref{compacity}, we are left to show that its hypothesis \eqref{tight1}-\eqref{tight2} hold. First, we will demonstrate that condition \eqref{tight1} implies condition \eqref{tight2}. Indeed, since $\omega_r^\nu$ and $\theta_r^\nu$ satisfy equations \eqref{eq:vor:reg}-\eqref{eq:theta:reg}, respectively, we have that
\begin{eqnarray*}
\omega^{\nu}_{r}(t)-\omega^{\nu}_{r}(s) &+& \int_s^t \eta_{r}(||\nabla u||_{L^{\infty}})\mathcal{L}_{u^{\nu}_{r}} \omega^{\nu}_{r} (\gamma) \diff \gamma + \int_s^t \displaystyle\sum_{i=1}^{\infty}  \mathcal{L}_{\xi_{i}} \omega^{\nu}_{r} (\gamma) \diff B^{i}_{\gamma} \\
&=& \int_s^t \nu \Delta^{5} \omega^{\nu}_{r} (\gamma) \diff \gamma +\frac{1}{2} \int_s^t \displaystyle\sum_{i=1}^{\infty}  \mathcal{L}^{2}_{\xi_{i}}\omega^{\nu}_{r} (\gamma) \diff \gamma +\int_s^t \partial_{x} \theta^{\nu}_{r} (\gamma) \diff \gamma,
\end{eqnarray*}
and
\begin{eqnarray*}
\theta^{\nu}_{r}(t) - \theta^{\nu}_{r}(s) &+& \int_s^t \eta_{r}(||\nabla\theta||_{L^{\infty}}) \mathcal{L}_{u^{\nu}_{r}} \theta^{\nu}_{r} (\gamma) \diff \gamma + \int_s^t \displaystyle\sum_{i=1}^{\infty}  \mathcal{L}_{\xi_{i}}\theta^{\nu}_{r} (\gamma) \diff B^{i}_{\gamma} \\
&=& \int_s^t \nu \Delta^{7} \theta^{\nu}_{r} (\gamma) \diff \gamma + \frac{1}{2} \int_s^t  \displaystyle\sum_{i=1}^{\infty}  \mathcal{L}^{2}_{\xi_{i}}\theta^{\nu}_{r} (\gamma) \diff \gamma .
\end{eqnarray*}
Hence by applying Minkowski's and Jensen's inequality, we obtain that
\begin{eqnarray*}
\mathbb{E} \left[ ||\omega^{\nu}_{r}(t)-\omega^{\nu}_{r}(s)||^2_{H^{-N}} \right]
& \lesssim & (t-s)   \int_s^t \mathbb{E} \left[\eta_{r}(||\nabla u||_{L^{\infty}})||\mathcal{L}_{u^{\nu}_{r}} \omega^{\nu}_{r}(\gamma)||^2_{H^{-N}}\right] \diff \gamma \\ &+&
(t-s)\int_s^t \mathbb{E} \left[ ||\nu \Delta^{5} \omega^{\nu}_{r} (\gamma)||^2_{H^{-N}} \right] \diff \gamma \\
&+& (t-s) \int_s^t \displaystyle\sum_{i=1}^{\infty} \mathbb{E} [|| \mathcal{L}^{2}_{\xi_{i}}\omega^{\nu}_{r} (\gamma)||^2_{H^{-N}}] \diff \gamma
 +(t-s) \int_s^t \mathbb{E} \left[||\partial_{x} \theta^{\nu}_{r} (\gamma)||^2_{H^{-N}}\right] \diff \gamma \\
 &+& \mathbb{E} \left[ \left| \left| \int_s^t \displaystyle\sum_{i=1}^{\infty}  \mathcal{L}_{\xi_{i}} \omega^{\nu}_{r} (\gamma) \diff B^{i}_{\gamma} \right| \right|^2_{H^{-N}} \right].
\end{eqnarray*}
Therefore, by using that $\norm{f}_{H^{-N}} \lesssim \norm{f}_{L^{2}}$, we have that
\begin{eqnarray*}
\mathbb{E} \left[ ||\omega^{\nu}_{r}(t)-\omega^{\nu}_{r}(s)||^2_{H^{-N}} \right]
& \lesssim &  (t-s) \int_s^t \mathbb{E} \left[\eta_{r}(||\nabla u||_{L^{\infty}})||\mathcal{L}_{u^{\nu}_{r}} \omega^{\nu}_{r}(\gamma)||^2_{H^{-N}}\right] \diff \gamma \\ &+& 
(t-s)\int_s^t \mathbb{E} \left[||\nu \Delta^{5} \omega^{\nu}_{r} (\gamma)||^2_{H^{-N}}\right] \diff \gamma \\
&+& (t-s) \int_s^t \displaystyle\sum_{i=1}^{\infty} \mathbb{E} \left[|| \mathcal{L}^{2}_{\xi_{i}}\omega^{\nu}_{r} (\gamma)||^2_{L^2}\right] \diff \gamma
 +(t-s) \int_s^t \mathbb{E} \left[||\partial_{x} \theta^{\nu}_{r} (\gamma)||^2_{L^2}\right] \diff \gamma \\
&+& \mathbb{E} \left[ \left| \left| \int_s^t \displaystyle\sum_{i=1}^{\infty}  \mathcal{L}_{\xi_{i}} \omega^{\nu}_{r} (\gamma) \diff B^{i}_{\gamma} \right| \right|^2_{L^2} \right].
\end{eqnarray*}
Mimicking the same estimates, we get
\begin{eqnarray*}
\mathbb{E} \left[ ||\theta^{\nu}_{r}(t)-\theta^{\nu}_{r}(s)||^2_{H^{-N^{\star}}} \right]
&\lesssim & (t-s) \int_s^t \mathbb{E} \left[\eta_{r}(||\nabla\theta||_{L^{\infty}})||\mathcal{L}_{u^{\nu}_{r}} \theta^{\nu}_{r}(\gamma)||^2_{H^{-N^{\star}}}\right] \diff \gamma  \\ &+& 
(t-s)\int_s^t \mathbb{E} \left[||\nu \Delta^{7} \theta^{\nu}_{r} (\gamma)||^2_{H^{-N^{\star}}}\right] \diff \gamma \\
&+& (t-s) \int_s^t \displaystyle\sum_{i=1}^{\infty} \mathbb{E} \left[|| \mathcal{L}^{2}_{\xi_{i}}\theta^{\nu}_{r} (\gamma)||^2_{L^2}\right] \diff \gamma \\
 &+& \mathbb{E} \left[ \left| \left| \int_s^t \displaystyle\sum_{i=1}^{\infty}  \mathcal{L}_{\xi_{i}} \theta^{\nu}_{r} (\gamma) \diff B^{i}_{\gamma} \right| \right|^2_{L^2} \right].
\end{eqnarray*}
In order to calculate each term, we use the following bounds:
\begin{lemma}\label{lemma:est:triv} The following estimates hold true
\begin{eqnarray}
||\mathcal{L}_{u^{\nu}_{r}} \omega^{\nu}_{r}||_{H^{-8}} & \lesssim &||\omega_r^\nu||_{L^\infty} ||\omega_r^\nu||_{H^{2}}, \label{est:straight1}\\
||\Delta^5 \omega_r^\nu||_{H^{-8}} & \lesssim & ||\omega_r^\nu||_{H^2}, \label{est:straight2} \\
||\mathcal{L}_{u^{\nu}_{r}} \theta^{\nu}_{r}||_{H^{-11}} & \lesssim &||\omega_r^\nu||_{L^\infty} ||\omega_r^\nu||_{H^{3}},  \label{est:straight3}\\
||\Delta^7 \theta_r^\nu||_{H^{-11}} & \lesssim & ||\theta_r^\nu||_{H^3}.  \label{est:straight4}
\end{eqnarray}
\end{lemma}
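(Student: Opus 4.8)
The plan is to exploit that all four bounds are deliberately lossy: the left-hand sides are measured in Sobolev spaces of very negative index ($H^{-8}$ and $H^{-11}$), so every derivative and every Biot--Savart regularity gain can simply be absorbed into that negative index. I would carry out the estimates on the regularised solutions, which by Proposition~\ref{Global:sol:reg:truncated} lie in $H^{4}\times H^{5}$ for positive times, so the duality and integration-by-parts manipulations below are licit.

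For the two transport terms \eqref{est:straight1} and \eqref{est:straight3} the key device is the divergence-free condition: writing $\mathcal{L}_{u^{\nu}_{r}}\omega^{\nu}_{r}=u^{\nu}_{r}\cdot\nabla\omega^{\nu}_{r}=\operatorname{div}(u^{\nu}_{r}\omega^{\nu}_{r})$ and pairing against a test function, one integration by parts gives
\[ \|\mathcal{L}_{u^{\nu}_{r}}\omega^{\nu}_{r}\|_{H^{-8}}\lesssim\|u^{\nu}_{r}\omega^{\nu}_{r}\|_{H^{-7}}\le\|u^{\nu}_{r}\omega^{\nu}_{r}\|_{L^{2}}\le\|\omega^{\nu}_{r}\|_{L^{\infty}}\|u^{\nu}_{r}\|_{L^{2}}, \]
and then the Biot--Savart inequality \eqref{BiotIne} (with $k=0$, $p=2$) bounds $\|u^{\nu}_{r}\|_{L^{2}}\le\|u^{\nu}_{r}\|_{H^{1}}\lesssim\|\omega^{\nu}_{r}\|_{L^{2}}\le\|\omega^{\nu}_{r}\|_{H^{2}}$, which is \eqref{est:straight1}. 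For \eqref{est:straight3} I would repeat the same argument starting from $\mathcal{L}_{u^{\nu}_{r}}\theta^{\nu}_{r}=\operatorname{div}(u^{\nu}_{r}\theta^{\nu}_{r})$, landing on $\|u^{\nu}_{r}\theta^{\nu}_{r}\|_{L^{2}}$ and distributing the factors by Hölder: the velocity placed in $L^{\infty}$, controlled by $\|\omega^{\nu}_{r}\|_{L^{\infty}}$ through \eqref{BiotIne} together with a Sobolev embedding on $\mathbb{T}^{2}$, and the remaining factor placed in $L^{2}$, controlled by $\|\theta^{\nu}_{r}\|_{H^{3}}$; the negative index leaves plenty of room for this.

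For the hyperviscous terms \eqref{est:straight2} and \eqref{est:straight4} there is nothing beyond counting orders: $\Delta^{5}$ and $\Delta^{7}$ are constant-coefficient operators of orders $10$ and $14$, so directly from the Fourier-multiplier definition of the Sobolev norms,
\[ \|\Delta^{5}\omega^{\nu}_{r}\|_{H^{-8}}\le\|\omega^{\nu}_{r}\|_{H^{-8+10}}=\|\omega^{\nu}_{r}\|_{H^{2}},\qquad \|\Delta^{7}\theta^{\nu}_{r}\|_{H^{-11}}\le\|\theta^{\nu}_{r}\|_{H^{-11+14}}=\|\theta^{\nu}_{r}\|_{H^{3}}. \]
This is also what dictates the choice $N=8$, $N^{\star}=11$ in \eqref{tight2}: they are the smallest indices for which the artificial viscosities $\nu\Delta^{5}$ and $\nu\Delta^{7}$ are dominated by the $H^{2}\times H^{3}$ norm already bounded in \eqref{tight1}.

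I do not expect a real obstacle here; the lemma is only a bundle of crude a priori bounds whose purpose is to feed the tightness estimate \eqref{tight2}. The single point needing a word of care is that the integration by parts is performed at a regularity where it makes sense, which is guaranteed by the $H^{4}\times H^{5}$ regularity of the regularised solutions (Proposition~\ref{Global:sol:reg:truncated}); beyond that it is just Hölder, Biot--Savart \eqref{BiotIne}, and the Sobolev embeddings \eqref{Sob:ine1}--\eqref{Sob:ine2}.
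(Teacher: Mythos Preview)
Your argument is correct and matches the paper's approach: the paper's own proof is simply the one line ``The statement can be checked by direct computations,'' and what you wrote is precisely the expected direct computation---divergence form plus duality for the transport terms, Fourier-multiplier order counting for the hyperviscous terms. Your chain $\|u\|_{L^\infty}\lesssim\|u\|_{W^{1,p}}\lesssim\|\omega\|_{L^p}\lesssim\|\omega\|_{L^\infty}$ (for some $p>2$, using Sobolev embedding, Biot--Savart \eqref{BiotIne}, and boundedness of $\mathbb{T}^2$) is the right way to justify the $L^\infty$ control on the velocity needed in \eqref{est:straight3}.

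One small remark: as stated in the paper, \eqref{est:straight3} has $\|\omega_r^\nu\|_{H^3}$ on the right, which is almost certainly a typo for $\|\theta_r^\nu\|_{H^3}$; your argument naturally produces the latter, which is both the correct bound and the one actually needed downstream to close \eqref{tight2} from \eqref{tight1}.
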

\begin{proof}[Proof of Lemma \ref{lemma:est:triv}]
The statement can be checked by direct computations.
\end{proof}
Using \eqref{est:straight1} we have that
\[ \eta_{r}(||\nabla u||_{L^{\infty}}) \norm{\mathcal{L}_{u^{\nu}_{r}} \omega^{\nu}_{r}}^{2}_{H^{-8}} \lesssim \norm{\omega^{\nu}_{r}}^{2}_{H^{2}}, \]
and therefore,
\begin{equation}\label{est:term1}
 \int_s^t \mathbb{E} \left[\eta_{r}(||\nabla u ||_{L^{\infty}})||\mathcal{L}_{u^{\nu}_{r}} \omega^{\nu}_{r}(\gamma)||^2_{H^{-8}}\right] \diff \gamma \lesssim  \int_{s}^{t} \mathbb{E}\left[\norm{\omega^{\nu}_{r}(\gamma)}^{2}_{H^{2}}\right] \diff \gamma \leq C,
\end{equation}
where we have used \eqref{tight1}. In the same way, by using \eqref{tight1} and \eqref{est:straight2}, it is easy to infer that
\begin{equation}\label{est:term2}
\int_s^t \mathbb{E} \left[||\nu \Delta^{5} \omega^{\nu}_{r} (\gamma)||^2_{H^{-8}}\right] \diff \gamma \lesssim  \int_s^t \mathbb{E}\left[||\omega_r^\nu(\gamma)||^{2}_{H^2}\right] \diff \gamma \leq C. 
\end{equation}
Applying \eqref{tight1}, we get 
\begin{equation}\label{est:term4}
\int_s^t \mathbb{E} \left[|| \partial_{x} \theta^{\nu}_{r} (\gamma)||^2_{L^{2}}\right] \diff \gamma \lesssim  \int_s^t \mathbb{E}\left[||\theta_r^\nu(\gamma)||^{2}_{H^3}\right] \diff \gamma \leq C. 
\end{equation}
Now using \eqref{assumption1}, we have that
\[  \norm{\displaystyle\sum_{i=1}^{\infty}\mathcal{L}^{2}_{\xi_{i}}\omega^{\nu}_{r} (\gamma)}^{2}_{L^{2}} \lesssim \norm{\omega^{\nu}_{r}(\gamma)}^{2}_{H^{2}}, \]
and hence by \eqref{tight1},
\begin{equation}\label{est:term3}
\int_s^t \displaystyle\sum_{i=1}^{\infty} \mathbb{E} \left[ \norm{\mathcal{L}^{2}_{\xi_{i}}\omega^{\nu}_{r} (\gamma)}^2_{L^2}\right] \diff \gamma \lesssim \int_{s}^{t}\mathbb{E}\left[ \norm{\omega^{\nu}_{r}(\gamma)}^{2}_{H^{2}}\right] \diff \gamma \leq C.
\end{equation}
Finally, the stochastic term can be controlled by using \eqref{assumption2},
\begin{eqnarray}\label{est:term5}
\mathbb{E} \left[ \left| \left| \int_s^t \displaystyle\sum_{i=1}^{\infty}  \mathcal{L}_{\xi_{i}} \omega^{\nu}_{r} (\gamma) \diff B^{i}_{\gamma} \right| \right|^2_{L^2} \right]
&=& \displaystyle\sum_{i=1}^{\infty} \int_s^t   \mathbb{E} \left[ ||\mathcal{L}_{\xi_{i}} \omega^{\nu}_{r} (\gamma) ||^2_{L^2} \right] \diff \gamma   \nonumber \\ 
&\lesssim & \int_s^t   \mathbb{E} \left[ || \omega^{\nu}_{r} (\gamma) ||^2_{H^{2}} \right] \diff \gamma \leq C. 
\end{eqnarray}
Combining estimates \eqref{est:term1}-\eqref{est:term5}, we deduce that
\[ \mathbb{E} \left[ \norm{\omega^{\nu}_{r}(t)-\omega^{\nu}_{r}(s)}^2_{H^{-8}} \right] \leq C(t-s).\]
Likewise, by using \eqref{est:straight3}-\eqref{est:straight4} we can conclude that
$$\mathbb{E} \left[ \norm{\theta^{\nu}_{r}(t)-\theta^{\nu}_{r}(s)}^2_{H^{-11}} \right] \leq C(t-s).$$
Hence for $0<\alpha < 1/2,$ 
\begin{eqnarray*}
\mathbb{E} \left[ \int_0^T \int_0^T \frac{||\omega_r^\nu (t) - \omega_r^\nu(s)||^2_{H^{-8}}}{|t-s|^{1+2\alpha}} \diff t \diff s \right] &+& \mathbb{E} \left[ \int_0^T \int_0^T \frac{||\theta_r^\nu (t) - \theta_r^\nu(s)||^2_{H^{-11}}}{|t-s|^{1+2\alpha}} \diff t \diff s \right] \\
&\leq &\mathbb{E} \left[ \int_0^T \int_0^T \frac{C}{|t-s|^{2 \alpha}} \diff t \diff s \right] \leq C_{1}(T),
\end{eqnarray*}
as required.
\begin{remark}
Notice that we have needed to use the spaces $H^{-8}$ and $H^{-11}$ in order to deal with the dissipative terms.
\end{remark}
We are left to prove that hypothesis \eqref{tight1} holds true. This fact is collected in the following lemma.
\begin{lemma}\label{lemma:est:apriori:full}
There exists a universal constant $C$ such that
\begin{equation}\label{est:apriori:full}
\mathbb{E} \left [ \sup_{t \in [0,T]} ||\omega_r^\nu (t)||^2_{H^{2}} \right ] +\mathbb{E} \left [ \sup_{t \in [0,T]} ||\theta_r^\nu(t)||^2_{H^{3}} \right ] \leq C.
\end{equation}
\end{lemma}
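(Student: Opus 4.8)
The plan is to run a direct $H^{2}\times H^{3}$ energy estimate on the regularised system \eqref{eq:vor:reg}--\eqref{eq:theta:reg}, exploiting the a priori information supplied by Proposition \ref{Global:sol:reg:truncated}: for fixed $\nu,r>0$ the solution $(\omega^{\nu}_{r},\theta^{\nu}_{r})$ lies in $L^{2}(\Xi;C([0,T];H^{2}\times H^{3}))$ and, for positive times, gains two extra derivatives. This extra smoothness is what makes it legitimate to apply It\^o's formula to the functionals $\norm{\omega^{\nu}_{r}(t)}_{H^{2}}^{2}$ and $\norm{\theta^{\nu}_{r}(t)}_{H^{3}}^{2}$, that is, to apply $\Lambda^{2}$ (resp.\ $\Lambda^{3}$) to the equations, pair in $L^{2}$ against $\Lambda^{2}\omega^{\nu}_{r}$ (resp.\ $\Lambda^{3}\theta^{\nu}_{r}$), add the lower-order pieces of the norms (handled identically and absorbed), and take differentials. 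Dropping the $r,\nu$ subscripts, this produces
\begin{equation*}
\diff\big(\norm{\omega}_{H^{2}}^{2}+\norm{\theta}_{H^{3}}^{2}\big)+2\nu\,D(t)\,\diff t=\big(\mathcal{N}+\mathcal{C}+\mathcal{F}\big)\,\diff t+\diff M_{t},
\end{equation*}
where $D(t)\geq 0$ stems from the hyperviscous terms (hence may be dropped), $\mathcal{N}$ collects the truncated transport contributions, $\mathcal{C}$ gathers the It\^o--Stratonovich correction together with the It\^o quadratic variation of the noise, $\mathcal{F}=2\langle\Lambda^{2}\partial_{x}\theta,\Lambda^{2}\omega\rangle_{L^{2}}$ is the buoyancy forcing, and $M_{t}=2\sum_{i}\int_{0}^{t}\big(\langle\Lambda^{2}\mathcal{L}_{\xi_{i}}\omega,\Lambda^{2}\omega\rangle_{L^{2}}+\langle\Lambda^{3}\mathcal{L}_{\xi_{i}}\theta,\Lambda^{3}\theta\rangle_{L^{2}}\big)\,\diff B^{i}_{s}$.

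The term $\mathcal{C}$ is exactly what Proposition \ref{Liecancellations} is built for: by \eqref{eq:cancellation2} with $k=2$ and $k=3$ one obtains $\mathcal{C}\leq C\big(\norm{\omega}_{H^{2}}^{2}+\norm{\theta}_{H^{3}}^{2}\big)$, the subtlety being that each individual summand contains derivatives of an order too high to be bounded separately, and only the cancellation of the top two orders saves the estimate. The forcing is immediate, $\abs{\mathcal{F}}\leq C\big(\norm{\omega}_{H^{2}}^{2}+\norm{\theta}_{H^{3}}^{2}\big)$. For $\mathcal{N}$ I would first use incompressibility to kill the leading transport contribution, $\langle u\cdot\nabla\Lambda^{2}\omega,\Lambda^{2}\omega\rangle_{L^{2}}=\langle u\cdot\nabla\Lambda^{3}\theta,\Lambda^{3}\theta\rangle_{L^{2}}=0$, and then estimate the remaining commutators by Kato--Ponce, trading velocity regularity for vorticity regularity via the Biot--Savart bound \eqref{BiotIne}, using the Gagliardo--Nirenberg inequality \eqref{Sob:ine3} for the $L^{4}$-type products, and exploiting $\norm{\omega}_{L^{\infty}}\leq\norm{\nabla u}_{L^{\infty}}$. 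This yields $\abs{\mathcal{N}}\leq C\big(1+\norm{\nabla u}_{L^{\infty}}+\norm{\nabla\theta}_{L^{\infty}}\big)\big(\norm{\omega}_{H^{2}}^{2}+\norm{\theta}_{H^{3}}^{2}\big)$; since the transport terms carry the cutoffs $\eta_{r}(\norm{\nabla u}_{L^{\infty}})$, $\eta_{r}(\norm{\nabla\theta}_{L^{\infty}})$ and $\eta_{r}(x)\,x\leq 2r$ on $[0,\infty)$, this collapses to $\abs{\mathcal{N}}\leq C_{r}\big(1+\norm{\omega}_{H^{2}}^{2}+\norm{\theta}_{H^{3}}^{2}\big)$ with $C_{r}$ independent of $\nu$.

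It then remains to take the supremum over $t\in[0,T]$ and expectations. For the martingale I would use that $\mathcal{L}_{\xi_{i}}$ is skew-adjoint on scalars, so the leading part of $\langle\Lambda^{2}\mathcal{L}_{\xi_{i}}\omega,\Lambda^{2}\omega\rangle_{L^{2}}$ cancels against itself and the surviving commutator has order zero, giving $\abs{\langle\Lambda^{2}\mathcal{L}_{\xi_{i}}\omega,\Lambda^{2}\omega\rangle_{L^{2}}}\leq c_{i}\norm{\omega}_{H^{2}}^{2}$ with $\sum_{i}c_{i}^{2}<\infty$ (and likewise for $\theta$); Burkholder--Davis--Gundy \eqref{BGD:ineq} followed by Young's inequality then gives $\mathbb{E}\big[\sup_{t\leq T}\abs{M_{t}}\big]\leq\tfrac12\,\mathbb{E}\big[\sup_{t\leq T}(\norm{\omega(t)}_{H^{2}}^{2}+\norm{\theta(t)}_{H^{3}}^{2})\big]+C\,\mathbb{E}\big[\int_{0}^{T}(\norm{\omega}_{H^{2}}^{2}+\norm{\theta}_{H^{3}}^{2})\,\diff s\big]$. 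Absorbing the first term on the left --- legitimate because Proposition \ref{Global:sol:reg:truncated} already ensures this quantity is finite for each fixed $\nu$ --- and writing $\Phi(T):=\mathbb{E}\big[\sup_{t\leq T}(\norm{\omega(t)}_{H^{2}}^{2}+\norm{\theta(t)}_{H^{3}}^{2})\big]$, one reaches $\Phi(T)\leq C_{r}\big(\norm{\omega_{0}}_{H^{2}}^{2}+\norm{\theta_{0}}_{H^{3}}^{2}\big)+C_{r}\int_{0}^{T}(1+\Phi(s))\,\diff s$, and Gr\"onwall's lemma closes the argument with a final constant depending only on $r$, $T$ and the initial data, but not on $\nu$, which is precisely \eqref{est:apriori:full}.

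The step I expect to be the main obstacle is the control of $\mathcal{N}$: closing the transport estimate with a $\nu$-independent constant forces the three ingredients above (incompressibility cancellation, Kato--Ponce commutators, Biot--Savart and Gagliardo--Nirenberg) to dovetail, and in particular one must track carefully which cutoff dominates which $L^{\infty}$ prefactor after the commutator terms are distributed, matching the $\norm{\nabla u}_{L^{\infty}}$ and $\norm{\nabla\theta}_{L^{\infty}}$ factors to the appropriate $\eta_{r}$. By contrast the high-order stochastic terms are not really an obstacle once Proposition \ref{Liecancellations} is in hand, and the only remaining technical point is the rigorous justification of It\^o's formula at $H^{2}\times H^{3}$ regularity, which rests on the smoothing property established in Proposition \ref{Global:sol:reg:truncated}.
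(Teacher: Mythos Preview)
Your proposal is correct and the energy-estimate portion matches the paper's argument closely: the same drop of the hyperviscous term, the same use of Proposition~\ref{Liecancellations} for the It\^o correction, the same incompressibility cancellation and Gagliardo--Nirenberg/Biot--Savart bounds for the transport commutator, and the same skew-adjointness observation to bound the martingale integrand. You also correctly identify the cutoff-matching issue in $\mathcal{N}$, which the paper itself glosses over.

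The one genuine methodological difference is in the closure after the pathwise differential inequality. The paper first applies Gr\"onwall pathwise, squares the resulting bound to reach fourth powers, applies Burkholder--Davis--Gundy to $\sup_s|M_s|^2$, obtains a closed inequality for $\mathbb{E}\big[\sup_s(\norm{\omega}_{H^2}^4+\norm{\theta}_{H^3}^4)\big]$ via a second Gr\"onwall, and only then descends to second moments by Jensen. Your route is more direct: you apply BDG with exponent $p=1$, split the resulting $\mathbb{E}[[M]_T^{1/2}]$ by the standard $\sup^{1/2}\cdot(\int)^{1/2}$ factorisation and Young's inequality, absorb the $\tfrac12\,\mathbb{E}[\sup(\cdot)]$ term on the left (legitimate exactly because Proposition~\ref{Global:sol:reg:truncated} guarantees finiteness for each fixed $\nu$), and close with a single Gr\"onwall on $\Phi(T)$. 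Your argument avoids the detour through fourth moments and is the cleaner of the two; the paper's version has the minor advantage of also yielding a uniform $L^4(\Xi)$ bound as a by-product, though this is not used downstream.
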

\begin{proof}[Proof of Lemma \ref{lemma:est:apriori:full}]
Taking two derivatives on equation \eqref{eq:vor:reg}, we have that
\begin{eqnarray*}
\Delta \omega^{\nu}_{r} (t) &=& \Delta \omega^{\nu}_{r} (0)  - \int_0^t \eta_{r}(||\nabla u||_{L^{\infty}}) \Delta \mathcal{L}_{u^{\nu}_{r}} \omega^{\nu}_{r}(s) \diff s - \int_0^t \displaystyle\sum_{i=1}^{\infty} \Delta  \mathcal{L}_{\xi_{i}} \omega^{\nu}_{r} (s) \diff B^{i}_{s} \\ &+& \int_0^t \nu \Delta^{6} \omega^{\nu}_{r} (s) \diff s 
+  \frac{1}{2} \int_0^t \displaystyle\sum_{i=1}^{\infty} \Delta \mathcal{L}^{2}_{\xi_{i}} \omega^{\nu}_{r}  (s) \diff s + \int_0^t \Delta \partial_{x} \theta^{\nu}_{r} (s) \diff s.  \\
\end{eqnarray*}
Dotting against $\Delta \omega^{\nu}_{r},$ applying It\^o's formula, and integrating over $\mathbb{T}^2,$ 
one obtains 
\begin{eqnarray*}
\frac{1}{2} \int_{\mathbb{T}^{2}} |\Delta \omega^{\nu}_{r} (t)|^2 \ \diff V &=& \frac{1}{2}\int_{\mathbb{T}^{2}} |\Delta \omega^{\nu}_{r} (0)|^2 \ \diff V  - \int_0^t \langle \eta_{r}(||\nabla u||_{L^{\infty}}) \Delta \mathcal{L}_{u^{\nu}_{r}} \omega^{\nu}_{r}(s), \Delta \omega^\nu_r (s) \rangle_{L^2} \diff s \\ &-& \displaystyle\sum_{i=1}^{\infty} \int_0^t  \langle \Delta  \mathcal{L}_{\xi_{i}} \omega^{\nu}_{r} (s), \Delta \omega_r^\nu (s) \rangle_{L^2} \diff B^{i}_{s} 
 +  \int_0^t \langle \nu \Delta^{6} \omega^{\nu}_{r} (s), \Delta \omega^\nu_r (s) \rangle_{L^2} \diff s  \\ 
&+&  \int_0^t \langle \Delta \partial_{x} \theta^{\nu}_{r} (s), \Delta \omega^\nu_r (s) \rangle_{L^2} \diff s 
+ \frac{1}{2} \displaystyle\sum_{i=1}^{\infty}\int_0^t  \langle \Delta \mathcal{L}^{2}_{\xi_{i}} \omega^{\nu}_{r} (s), \Delta \omega^\nu_r (s) \rangle_{L^2} \diff s  \\
&+&\frac{1}{2} \sum_{i=1}^\infty \int_0^t \langle \Delta \mathcal{L}_{\xi_i} \omega^\nu_r(s), \Delta \mathcal{L}_{\xi_i} \omega^\nu_r (s) \rangle_{L^2} \diff s.
\end{eqnarray*}
Let us estimate term by term: \\
\begin{itemize}[leftmargin=*]
\item The dissipative term $\langle \nu \Delta^{6} \omega^{\nu}_{r} , \Delta \omega^\nu_r  \rangle_{L^2}$ cannot be used to absorb any other singular terms, since we want our estimates to be independent of $\nu$. Applying integration by parts
\[ \langle \nu \Delta^{6} \omega^{\nu}_{r} , \Delta \omega^\nu_r \rangle_{L^2} = - \nu \int_{\mathbb{T}^2} |\Delta^{7/2}\omega^{\nu}_{r}|^2 \diff V, \]
so we see that the dissipative term has the correct sign, and we can just drop it. \\
\item The sum of the last two terms  
$$\frac{1}{2} \displaystyle\sum_{i=1}^\infty \langle \Delta \mathcal{L}_{\xi_i} \omega^\nu_r, \Delta \mathcal{L}_{\xi_i} \omega^\nu_r \rangle_{L^2} + \frac{1}{2} \displaystyle\sum_{i=1}^{\infty} \langle \Delta \mathcal{L}^{2}_{\xi_{i}} \omega^{\nu}_{r}, \Delta \omega^\nu_r \rangle_{L^2} $$ 
can be bounded thanks to \eqref{eq:cancellation2} as
\[   \frac{1}{2} \sum_{i=1}^\infty \langle \Delta \mathcal{L}_{\xi_i} \omega^\nu_r, \Delta \mathcal{L}_{\xi_i} \omega^\nu_r \rangle_{L^2} 
+ \frac{1}{2} \displaystyle\sum_{i=1}^{\infty} \langle \Delta \mathcal{L}^{2}_{\xi_{i}} \omega^{\nu}_{r}, \Delta \omega^\nu_r \rangle_{L^2} \lesssim ||\omega^{\nu}_{r} ||^{2}_{H^{2}}.\] \\
\item The $H^{2}$ estimate for the nonlinear term is quite standard. It is easy to show that
\begin{equation}\label{est:nonlinear:u}
 \left| \int_{\mathbb{T}^2}  \Delta \mathcal{L}_{u^{\nu}_{r}} \omega^{\nu}_{r} \Delta \omega^\nu_r \ \diff V \right| \lesssim ||\nabla u^{\nu}_{r} ||_{L^\infty}||\omega^{\nu}_{r}||^2_{H^{2}}. 
\end{equation}
Indeed, by Leibniz chain rule we have that
\begin{eqnarray*}
\int_{\mathbb{T}^2}  \Delta \mathcal{L}_{u^{\nu}_{r}} \omega^{\nu}_{r} \Delta \omega^\nu_r \ \diff V &=&  \int_{\mathbb{T}^{2}}  ( \Delta u^{\nu}_{r} \cdot \nabla)\omega^{\nu}_{r} \Delta \omega^\nu_r \ \diff V + \int_{\mathbb{T}^2}  (u^{\nu}_{r} \cdot \nabla \Delta \omega^{\nu}_{r}) \Delta \omega^{\nu}_{r} \ \diff V \\
&+&   2\int_{\mathbb{T}^2}  \displaystyle\sum_{|\alpha|=1}\left(D^{\alpha}u^{\nu}_{r} \cdot  D^{\alpha}\nabla \omega^{\nu}_{r}\right)\Delta \omega^\nu_r \ \diff V.
\end{eqnarray*}
The second integral on the right-hand side above is zero due to the incompressibility condition.  The first integral can by bounded as follows
\begin{eqnarray*}\left| \int_{\mathbb{T}^{2}}   (\Delta u^{\nu}_{r} \cdot \nabla) \omega^{\nu}_{r} \Delta \omega^\nu_r \ \diff V \right| &\lesssim & \norm{\Delta u^{\nu}_{r}}_{L^{4}} \norm{\nabla \omega^\nu_r}_{L^{4}} \norm{\Delta \omega^\nu_r}_{L^{2}} \\
&\lesssim & \norm{\nabla u^{\nu}_{r}}_{L^{\infty}}^{1/2}\norm{\nabla \Delta u^{\nu}_{r}}^{1/2}_{L^{2}}\norm{\omega^{\nu}_{r}}^{1/2}_{L^{\infty}}\norm{\Delta \omega^{\nu}_{r}}^{1/2}_{L^{2}}\norm{\Delta\omega^{\nu}_{r}}_{L^{2}} \\
&\lesssim & \norm{\nabla u^{\nu}_{r}}_{L^{\infty}}\norm{\omega^{\nu}_{r}}^{2}_{H^{2}}, 
\end{eqnarray*}
where we have used the Gagliardo-Nirenberg inequality \eqref{Sob:ine3} and the Biot-Savart mapping \eqref{BiotIne}.
We can also estimate the last integral with the aid of H\"older's inequality 
\[ \left|\int_{\mathbb{T}^{2}} \displaystyle\sum_{|\alpha|=1} \left(D^{\alpha}u^{\nu}_{r} \cdot D^{\alpha} \nabla \omega^{\nu}_{r}\right)\Delta \omega^\nu_r \ \diff V \right| \lesssim \norm{\nabla u^{\nu}_{r}}_{L^{ \infty}} \norm{\omega^{\nu}_{r}}^{2}_{H^{2}},\]
thus proving our claim. \\
\item The term $\langle \Delta \partial_{x} \theta^{\nu}_{r}, \Delta \omega^\nu_r \rangle_{L^2}$ can be easily bounded by applying H\"older's inequality:
\begin{equation}\label{thetaterm}
\left| \int_{\mathbb{T}^{2}} \Delta \partial_{x}\theta^{\nu}_{r} \Delta \omega^\nu_r  \ \diff V \right| \lesssim ||\omega^\nu_r||_{H^{2}} ||\theta^\nu_r ||_{H^{3}}. 
\end{equation}
\end{itemize}
On the other hand, by taking three derivatives in equation \eqref{eq:theta:reg}, dotting against $\Lambda^{3}\theta^{\nu}_{r}$, using It\^o's formula, and integrating over $\mathbb{T}^2$,
\begin{eqnarray*}
\frac{1}{2} \int_{\mathbb{T}^{2}} |\Lambda^{3} \theta^{\nu}_{r} (t)|^2 \ \diff V &=& \frac{1}{2} \int_{\mathbb{T}^{2}}  |\Lambda^{3} \theta^{\nu}_{r} (0)|^2 \ \diff V - \int_0^t \langle \eta_{r}(||\nabla\theta||_{L^{\infty}}) \Lambda^{3} \mathcal{L}_{u^{\nu}_{r}} \theta^{\nu}_{r}(s), \Lambda^{3} \theta^\nu_r (s) \rangle_{L^2} \diff s \\
&-& \displaystyle \sum_{i=1}^{\infty} \int_0^t  \langle \Lambda^{3}  \mathcal{L}_{\xi_{i}} \theta^{\nu}_{r} (s), \Lambda^{3} \theta_r^\nu (s) \rangle_{L^2} \diff B^{i}_{s} + \int_0^t \langle \nu \Lambda^{3} \Delta^{7} \theta^{\nu}_{r} (s), \Lambda^{3} \theta^\nu_r (s) \rangle_{L^2} \diff s  \\
&+& \frac{1}{2} \sum_{i=1}^\infty \int_0^t \langle \Lambda^{3} \mathcal{L}_{\xi_i} \theta^\nu_r(s), \Lambda^{3} \mathcal{L}_{\xi_i} \theta^\nu_r (s) \rangle_{L^2} \diff s
+ \displaystyle \frac{1}{2} \sum_{i=1}^{\infty}\int_0^t  \langle \Lambda^{3} \mathcal{L}^{2}_{\xi_{i}} \theta^{\nu}_{r} (s), \Lambda^{3} \theta^\nu_r (s) \rangle_{L^2} \diff s.
\end{eqnarray*}
Let us analyse each term separately: \\
\begin{itemize}[leftmargin=*]
\item The dissipative term $\int_0^t \langle \nu \Lambda^{3} \Delta^{7} \theta^{\nu}_{r} (s), \Lambda^{3} \theta^\nu_r (s) \rangle_{L^2} \diff s$ can be ignored. Indeed, applying integration by parts
\[ \langle \nu \Lambda^{3}\Delta^{7} \theta^{\nu}_{r} , \Lambda^{3} \theta^\nu_r  \rangle_{L^2} = -\nu \int_{\mathbb{T}^2} |\Delta^{5} \theta_r^\nu |^2 \diff V, \]
and thus we see that it has the correct sign.  \\
\item The sum of the last two terms can be bounded thanks to \eqref{eq:cancellation2} as
\[  \frac{1}{2} \sum_{i=1}^\infty \langle \Lambda^{3} \mathcal{L}_{\xi_i} \theta^\nu_r, \Lambda^{3} \mathcal{L}_{\xi_i} \theta^\nu_r \rangle _{L^2}
+ \frac{1}{2} \displaystyle\sum_{i=1}^{\infty} \langle \Lambda^{3} \mathcal{L}^{2}_{\xi_{i}} \theta^{\nu}_{r}, \Lambda^{3} \theta^\nu_r \rangle_{L^2} \lesssim ||\theta^{\nu}_{r} ||^{2}_{H^{3}}.\] \\
\item The nonlinear term can be estimated as in the deterministic case,
\[ \left| \int_{\mathbb{T}^2}  \Lambda^{3} \mathcal{L}_{u^{\nu}_{r}} \theta^{\nu}_{r} \Lambda^{3} \theta^\nu_r \ \diff V\right| \lesssim  (||\nabla u^{\nu}_{r} ||_{L^\infty} + ||\nabla\theta^{\nu}_{r}||_{L^{\infty}}) (||\omega^{\nu}_{r}||^{2}_{H^{2}}+||\theta^{\nu}_{r}||^2_{H^{3}}). \] 
We omit the proof to avoid redundancy, since it is quite similar to the $H^{2}$ estimate for the nonlinear term \eqref{est:nonlinear:u}. 
\end{itemize}\vspace{0.5cm}
To conclude the proof, we just need to bound the local martingale terms. This is done by estimating there quadratic variation and using the Burkholder-Davis-Gundy inequality \eqref{BGD:ineq}. Indeed, let us denote
\makeatletter
 \def\@eqnnum{{\normalsize \normalcolor (\theequation)}}
  \makeatother
 { \small	
\begin{equation*}
M_t = \int_0^t \displaystyle\sum_{i=1}^\infty \left(  \langle \mathcal{L}_{\xi_{i}} \omega^{\nu}_{r} (s),  \omega_r^\nu (s) \rangle_{L^2}  + \langle \mathcal{L}_{\xi_{i}} \theta^{\nu}_{r} (s),  \theta_r^\nu (s) \rangle_{L^2}  +  \langle \Delta  \mathcal{L}_{\xi_{i}} \omega^{\nu}_{r} (s), \Delta \omega_r^\nu (s) \rangle_{L^2} + \langle \Lambda^{3} \mathcal{L}_{\xi_{i}} \theta^{\nu}_{r} (s), \Lambda^{3} \theta_r^\nu (s) \rangle_{L^2} \right) \diff B_s^i. 
\end{equation*}}
We will denote the quantities $\omega_r^\nu,\theta^\nu_{r}$ by $\omega,\theta,$ respectively, to make the notation in the following estimates less cumbersome, but implicitly taking into account that they indeed depend on $\nu$ and $r.$ From the aforementioned estimates we can infer
\[ ||\omega(t)||^{2}_{H^{2}} +||\theta(t)||^{2}_{H^{3}} \lesssim ||\omega_{0}||^{2}_{H^{2}} +||\theta_{0}||^{2}_{H^3} +  M_t + \eta(r) \int_0^t \left(||\omega(s)||^{2}_{H^{2}}+||\theta(s)||^{2}_{H^{3}} \right) \diff s, \]
and thus by Gr\"onwall's inequality
\[ \displaystyle\sup_{s\in[0,t]}\left( ||\omega(s)||^{2}_{H^{2}} +||\theta(s)||^{2}_{H^{3}}\right) \lesssim \text{exp}(\eta(r)t) \left( ||\omega_{0}||^{2}_{H^{2}}+||\theta_0||^{2}_{H^{3}} +\displaystyle\sup_{s\in[0,t]}\left|M_s\right|\right). \]
Consequently, by taking expectations,
\begin{equation}\label{est:expectation}
\mathbb{E}  \left[ \sup_{s \in [0,t]}\left(||\omega(s)||_{H^{2}}^4+||\theta(s)||^{4}_{H^{3}}\right) \right] \lesssim \text{exp}(\eta(r)t) \left( ||\omega_0||^4_{H^{2}}+||\theta_0||^{4}_{H^{3}} + \mathbb{E} \left[ \sup_{s \in [0,t]} |M_s|^2 \right] \right).\end{equation}
Invoking Burkholder-Davis-Gundy inequality \eqref{BGD:ineq}, the term $|M_s|$ can be controlled by
\begin{equation}\label{est:burk-dav-gun}
\mathbb{E} \left[ \sup_{s \in [0,t]} |M_{s}|^{2} \right] \lesssim  \mathbb{E} \left[\left[M\right]_{t}\right], 
\end{equation}
where $\left[M_t\right]$  is the quadratic variation of the process $M_t,$ given by
$$\left[M\right]_t = \int_0^t \displaystyle\sum_{i=1}^\infty  \left( \langle \mathcal{L}_{\xi_{i}} \omega(s),  \omega(s) \rangle_{L^2} + \langle \mathcal{L}_{\xi_{i}} \theta (s),  \theta (s) \rangle_{L^2}  +  \langle \Delta  \mathcal{L}_{\xi_{i}} \omega(s), \Delta \omega(s) \rangle_{L^2} + \langle \Lambda^{3} \mathcal{L}_{\xi_{i}} \theta(s), \Lambda^{3} \theta(s) \rangle_{L^2} \right)^2 \diff s.$$
It follows that
\begin{equation}
\left[M\right]_{t}\leq  \displaystyle\sum_{i=1}^{\infty} \int_{0}^{t} C_{i} \left(\norm{\omega(s)}^{4}_{H^2} + \norm{\theta(s)}^{4}_{H^3} \right) \ \diff s, 
\end{equation}
where $C_{i}=C(\norm{\xi_{i}}_{H^{3}})$
since the highest order terms cancel, namely
$$\int_{\mathbb{T}^2} (\xi_i \cdot \nabla \Delta \omega) \Delta \omega \diff V = - \frac{1}{2} \int_{\mathbb{T}^2} \text{div} \ (\xi_i)  |\Delta \omega|^2 \diff V = 0.$$
Therefore, by making use of assumption \eqref{assumption3} we obtain
\begin{equation}\label{est:martingale}
\mathbb{E} \left[[M]_t\right] \lesssim \int_0^t \mathbb{E} \left[ \sup_{s \in [0,\gamma] } \left(|| \omega (\gamma) ||^{4}_{H^{2}}+||\theta(\gamma)||^{4}_{H^{3}}\right)\right] \diff \gamma.
\end{equation}
Hence, from estimates \eqref{est:expectation}, \eqref{est:burk-dav-gun}, \eqref{est:martingale}, and Gr\"onwall's inequality we have that
$$\mathbb{E} \left [ \sup_{t \in [0,T]} ||\omega (t)||^4_{H^{2}} \right ] + \mathbb{E} \left [ \sup_{t \in [0,T]} ||\theta(t)||^4_{H^{3}} \right ] \leq C(T).$$
Finally, bound \eqref{est:apriori:full} follows by a simple application of Jensen's inequality.
\end{proof}
\newpage

\section{The blow-up criterion}\label{blowup:crit}
In this subsection we prove an analogue of the well-known Beale-Kato-Majda criterion for the Euler equation, but this time for the stochastic Boussinesq system. However, let us first discuss some key differences between the deterministic and the stochastic models. \\
\subsection{The deterministic blow-up criterion.}\label{subsec:det:blowup}
The deterministic Boussinesq equations in vorticity form are given by
\begin{empheq}[left = \empheqlbrace]{align}
\label{eq:vorticity:deter} & \partial_{t}\omega + u\cdot \nabla\omega = \partial_{x}\theta,  \\
\label{eqtheta:deter} & \partial_{t}\theta + u\cdot \nabla \theta= 0, 
\end{empheq}
where $\omega= \nabla^{\perp}\cdot u$ and $\text{div} \ u= 0$.  Local existence and uniqueness of strong solutions in $H^{s-1}(\mathbb{T}^2,\mathbb{R})\times H^{s}(\mathbb{T}^2,\mathbb{R})$ for $s>2$ can be shown by obtaining a priori estimates and applying a Picard type theorem. Let us now assume that $(\omega,\theta)$ are local solutions and let $T^*>0.$ If 
 \begin{equation}\label{bkm:det1}
  \int_{0}^{T^{\star}}\norm{\nabla u(t)}_{L^\infty} \ \diff t < \infty,
\end{equation}
then the solution  can be extended to $[0,T^{\star}]$.  Indeed, to do so, we can compute the $H^{s-1}$ norm of the vorticity as follows
\[ \frac{1}{2}\frac{\diff}{\diff t} \norm{D^{s-1}\omega}^{2}_{L^{2}} = \int_{\mathbb{T}^{2}} D^{s-1}\partial_{x}\theta D^{s-1}\omega \ \diff V - \int_{\mathbb{T}^{2} }D^{s-1}\omega D^{s-1}(u\cdot\nabla\omega)  \diff V. \]
Using Leibniz rule, Gauss's theorem, and well-known calculus inequalities, we have that
\[ \frac{\diff}{\diff t} \norm{D^{s-1}\omega}^{2}_{L^{2}} \lesssim \norm{D^{s}\theta}^{2}_{L^{2}}+ \norm{D^{s-1}\omega}^{2}_{L^{2}}+ \norm{D^{s-1}\omega}^{2}_{L^2}\norm{\nabla u}_{L^\infty}. \]
Similarly, for the potential temperature $\theta$, we obtain
\[ \frac{\diff}{\diff t} \norm{D^{s}\theta}^{2}_{L^2} \lesssim \norm{D^{s}\theta}_{L^2}\left(\norm{D^{s}u}_{L^2}\norm{\nabla\theta}_{L^{\infty}}+\norm{D^{s}\theta}_{L^2}\norm{\nabla u }_{L^{\infty}}\right). \]
Moreover, one gets the upper bound
\[ \frac{1}{p}\frac{\diff}{\diff t}\norm{\nabla \theta}^{p}_{L^{p}} \lesssim \norm{\nabla u}_{L^{\infty}}\norm{\nabla\theta}^{p}_ {L^p}, \]
for any $p\in[2,\infty]$. By Gr\"onwall's lemma, the inequality 
\[ \norm{\nabla \theta}^{p}_{L^{p}} \lesssim \norm{\nabla\theta_{0}}^{p}_{L^{p}} \text{exp}\left(\int_{0}^{t} \norm{\nabla u(s)}_{L^\infty} \ \diff s \right), \]
holds for any $p\in[2,\infty]$. Therefore, letting $E(t)= \norm{D^{s-1}\omega}^{2}_{L^2}+\norm{D^{s}\theta}^{2}_{L^{2}}$, one derives the energy inequality
\[ \frac{\diff }{\diff t}E(t) \lesssim \norm{\nabla\theta_{0}}_{L^{p}} \text{exp} \left(\int_{0}^{t} \norm{\nabla u(s)}_{L^\infty} \ \diff s \right)   E(t)+E(t)\left(\norm{\nabla u}_{L^\infty}+1\right).\]
Invoking Gr\"onwall's lemma one gets that if (\ref{bkm:det1}) holds, then $E(t)<\infty$ on $[0,T^{\star}]$. 
\begin{remark}
Furthermore, one can also prove the same result under the alternative assumption
\[ \int_{0}^{T^{\star}} \norm{\nabla\theta(t)}_{L^{\infty}} \diff  t < \infty. \]
Nevertheless, we omit the proof, since the strategy is quite similar although a little more involved. One needs to use more sophisticated calculus inequalities, like for instance, a logarithmic type Sobolev inequality (cf. \cite{BreWai}).
\end{remark}
\begin{remark}
Notice that we cannot expect (as for 3D Euler), a Beale-Kato-Majda criterion \cite{BKM84} stating that if
\[\int_{0}^{T^*} \norm{\omega(t)}_{L^\infty} \diff t < \infty, \]
then the corresponding solution stays regular on $[0,T^*],$ i.e. a blow-up criterion in terms of the vorticity. This is due to the fact that we cannot control properly $\theta$ by using solely the vorticity. Note that if we only had $u$ in the equations, this could be done easily by using a logarithmic inequality like in 3D Euler. However, for the 2D Boussinesq equations it is unknown whether controlling $\norm{\omega}_{L^\infty}$ is enough for global regularity.
\end{remark}
\subsection{The stochastic blow-up criterion.} A priori, one would expect this type of results might be recovered for the stochastic Boussinesq equations. Nevertheless, an immediate analysis reveals that several complications arise, namely: \\
\begin{itemize}[leftmargin=*]
\item When computing the evolution of the $H^{s-1}$ norm of the vorticity and the $H^{s}$ norm of the potential temperature, various extra terms appear in the stochastic case. For instance, the term $\displaystyle\sum_{i=1}^{\infty} \mathcal{L}^{2}_{\xi_{i}} \omega \diff t$ (a second differential operator) needs to be treated carefully. \\ 
\item Another term one must take into account is the It\^o correction, which appears constantly while carrying out computations. \\
\end{itemize}
By using the algebraic in \cite{CriHolFla}, one can manage to manipulate these differential operators and explore some cancellations involving the most singular terms. We also want to point out that when deriving the estimates for the stochastic blow-up criterion, a new term appears in the stochastic Boussinesq case, which seems to make the derivation of a stochastic version of the deterministic Boussinesq criterion hopeless (see Remark \ref{destroyhope}). However, we are able to show a slightly weaker yet very useful version, which reads as follows.
\begin{theorem}[Blow-up criterion for stochastic Boussinesq]\label{theorem:blowup}
Let us define the stopping times $\tau^2$ and $\tau^{\infty}$ by
$$\tau^2 = \lim_{n \rightarrow \infty} \tau_n^2, \qquad \tau_n^2 = \inf \left\{t \geq 0: \norm{\omega(\cdot,t)}_{H^2} + \norm{\theta(\cdot,t)}_{H^3} \geq n  \right\},$$
$$\tau^{\infty} = \lim_{n \rightarrow \infty} \tau_n^{\infty}, \qquad \tau_n^{\infty} = \inf \left\{t \geq 0: \int_0^t ( \norm{\nabla u(\cdot,s)}_{L^\infty}  + \norm{\nabla \theta(\cdot,s)}_{L^\infty} ) \ \diff s\geq n  \right\}.$$
Then $\tau^2 = \tau^{\infty},$ $\mathbb{P}$ almost surely.
\end{theorem}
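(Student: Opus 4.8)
The plan is to prove the two inequalities $\tau^{2}\le\tau^{\infty}$ and $\tau^{\infty}\le\tau^{2}$ separately, each $\mathbb{P}$-almost surely; together they give $\tau^{2}=\tau^{\infty}$. The inclusion $\tau^{2}\le\tau^{\infty}$ is soft. Fix $t<\tau^{2}$; then $s\mapsto\norm{\omega(s)}_{H^{2}}+\norm{\theta(s)}_{H^{3}}$ is continuous, hence bounded, on $[0,t]$, and the Sobolev embedding \eqref{Sob:ine2} together with the Biot-Savart estimate \eqref{BiotIne} gives $\norm{\nabla u(s)}_{L^{\infty}}\lesssim\norm{u(s)}_{H^{3}}\lesssim\norm{\omega(s)}_{H^{2}}$ and $\norm{\nabla\theta(s)}_{L^{\infty}}\lesssim\norm{\theta(s)}_{H^{3}}$. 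Hence $\int_{0}^{t}\big(\norm{\nabla u}_{L^{\infty}}+\norm{\nabla\theta}_{L^{\infty}}\big)\,\diff s<\infty$, so $t<\tau^{\infty}$; letting $t\nearrow\tau^{2}$ yields $\tau^{2}\le\tau^{\infty}$.

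For the converse $\tau^{\infty}\le\tau^{2}$, I would run the $H^{2}\times H^{3}$ energy estimate on the genuine equations \eqref{eq:vorticity:ito}-\eqref{eq:theta:ito} — made rigorous, as in Section \ref{globalregular}, by first working with the hyper-regularised equations and removing the regularisation afterwards — but now keeping the quantity $\norm{\nabla u}_{L^{\infty}}+\norm{\nabla\theta}_{L^{\infty}}$ explicit rather than absorbing it into the full norm. Applying It\^o's formula to $\Phi(t):=\norm{\omega(t)}^{2}_{H^{2}}+\norm{\theta(t)}^{2}_{H^{3}}$, the double-Lie-derivative and quadratic-variation contributions combine and are dominated by $C\Phi$ via the cancellation estimate \eqref{eq:cancellation2} (with $k=2$ on $\omega$, $k=3$ on $\theta$); the buoyancy term obeys $|\langle\Delta\partial_{x}\theta,\Delta\omega\rangle_{L^{2}}|\lesssim\Phi$; and the transport terms are handled exactly as in the proof of Lemma \ref{lemma:est:apriori:full} (cf.\ \eqref{est:nonlinear:u}), using the Gagliardo-Nirenberg inequality \eqref{Sob:ine3}, the Biot-Savart bound \eqref{BiotIne}, and the two-dimensional identity $\norm{\omega}_{L^{\infty}}\lesssim\norm{\nabla u}_{L^{\infty}}$, to give $|\langle\Delta\mathcal{L}_{u}\omega,\Delta\omega\rangle_{L^{2}}|\lesssim\norm{\nabla u}_{L^{\infty}}\norm{\omega}^{2}_{H^{2}}$ and $|\langle\Lambda^{3}\mathcal{L}_{u}\theta,\Lambda^{3}\theta\rangle_{L^{2}}|\lesssim\big(\norm{\nabla u}_{L^{\infty}}+\norm{\nabla\theta}_{L^{\infty}}\big)\Phi$. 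Collecting terms, for any localising stopping time $\sigma$,
\[
\Phi(t\wedge\sigma)\le\Phi(0)+C\int_{0}^{t\wedge\sigma}g(s)\,\Phi(s)\,\diff s+M_{t\wedge\sigma},\qquad g(s):=1+\norm{\nabla u(s)}_{L^{\infty}}+\norm{\nabla\theta(s)}_{L^{\infty}},
\]
where $M$ is the local martingale built from the single-noise brackets.

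To conclude, multiply by $e^{-C\int_{0}^{t}g(s)\,\diff s}$ to kill the drift, exactly as with $Y_{t}$ in the proof of Proposition \ref{prop:uniqueness}: then $e^{-C\int_{0}^{t}g}\Phi(t)$ is bounded above by $\Phi(0)$ plus a local martingale. Now fix $n,m\in\mathbb{N}$ and $T>0$ and localise at $\sigma:=\tau^{\infty}_{n}\wedge\tau^{2}_{m}\wedge T$, on which $\int_{0}^{\sigma}g\,\diff s\le T+n$; estimating the martingale contribution by the Burkholder-Davis-Gundy inequality \eqref{BGD:ineq} together with \eqref{assumption3} (as in \eqref{est:burk-dav-gun}-\eqref{est:martingale}) and invoking Gr\"onwall's lemma, one obtains $\mathbb{E}\big[\sup_{t\in[0,T]}\Phi(t\wedge\sigma)\big]\le C(n,T,\Phi(0))$ uniformly in $m$. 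Since $\limsup_{t\nearrow\tau^{2}}\Phi(t)=\infty$ on $\{\tau^{2}<\infty\}$ by Theorem \ref{mainth}, letting $m\to\infty$ forces $\tau^{2}\ge\tau^{\infty}_{n}\wedge T$ almost surely; then $n\to\infty$ and $T\to\infty$ give $\tau^{2}\ge\tau^{\infty}$, completing the proof.

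The main obstacle is the nonlinear estimate: one must extract precisely the factor $\norm{\nabla u}_{L^{\infty}}+\norm{\nabla\theta}_{L^{\infty}}$ — and nothing carrying an extra derivative — from $\langle\Delta\mathcal{L}_{u}\omega,\Delta\omega\rangle$ and $\langle\Lambda^{3}\mathcal{L}_{u}\theta,\Lambda^{3}\theta\rangle$, which is the Beale-Kato-Majda commutator mechanism and, for the vorticity equation, crucially uses $\norm{\omega}_{L^{\infty}}\lesssim\norm{\nabla u}_{L^{\infty}}$. In the $\Lambda^{3}\theta$ estimate a contribution appears that is controlled only by $\norm{\nabla\theta}_{L^{\infty}}$ and cannot be absorbed by $\norm{\nabla u}_{L^{\infty}}$ alone (cf.\ Remark \ref{destroyhope}); this is exactly why the criterion must involve both $\nabla u$ and $\nabla\theta$, in contrast to the deterministic case of Subsection \ref{subsec:det:blowup}. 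The remaining work is routine: justifying the $H^{2}\times H^{3}$ It\^o computation via the hyper-regularisation of Section \ref{globalregular}, and the standard stochastic Gr\"onwall and localisation bookkeeping.
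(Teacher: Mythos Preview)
Your proposal is correct and follows the same two-step architecture as the paper (the inequality $\tau^{2}\le\tau^{\infty}$ via Sobolev/Biot--Savart, then the energy inequality \eqref{blowestimada} for $\Phi=\|\omega\|_{H^{2}}^{2}+\|\theta\|_{H^{3}}^{2}$ with the nonlinear terms controlled exactly as you say). The one genuine difference is in how you close the stochastic Gr\"onwall step. You multiply by $e^{-C\int_{0}^{t}g}$, localise at $\tau^{\infty}_{n}\wedge\tau^{2}_{m}\wedge T$ so that the exponential weight is deterministically bounded, and then run the standard BDG--absorb--Gr\"onwall loop on $\mathbb{E}\big[\sup_{t}\Phi(t\wedge\sigma)\big]$; this is the approach of Lemma~\ref{lemma:est:apriori:full} transplanted to a random drift. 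The paper instead applies It\^o's formula to $\log\Phi$ (equation \eqref{stochasticitoformula}): the point of the logarithm is that the martingale integrand becomes $\langle\cdot,\cdot\rangle/\Phi$, whose square is bounded by a constant independently of $\Phi$, so the quadratic variation of the new martingale is $\lesssim t$ outright and BDG gives $\mathbb{E}\big[\sup_{s}|M_{s}|\big]\lesssim\sqrt{t}$ with no Gr\"onwall or absorption needed. Both routes are valid; yours is the more ``standard'' stochastic-Gr\"onwall argument, while the paper's $\log$ trick is slicker because it decouples the martingale estimate from $\Phi$ entirely and yields the bound on $\mathbb{E}\big[\sup\log\Phi\big]$ in one line.
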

\begin{proof}[Proof of Theorem \ref{theorem:blowup}] We prove the above equality by showing both $\tau^{2}\leq \tau^{\infty}$ and $\tau^{\infty}\leq \tau^{2}$ in two different steps. \\ \\
\textsl{Step 1: $\tau^{2}\leq \tau^{\infty}$.}
This inequality is straightforward to check. It follows from the Sobolev embedding inequality (\ref{Sob:ine2}) and Biot-Savart mapping (\ref{BiotIne}) that
\[ \norm{\nabla u}_{L^\infty} + \norm{\nabla \theta}_{L^\infty} \lesssim \norm{\omega}_{H^2}+ \norm{\theta}_{H^3} .\]
\textsl{Step 2: $\tau^{\infty}\leq \tau^{2}$.}
Consider the hyper-regularised truncated equations introduced in Subsection \ref{globalregular} given by 
\begin{eqnarray}
\label{eq:vor:reg2} \diff  \omega^{\nu}_{r} + \eta_{r}(||\nabla u||_{L^{\infty}})\mathcal{L}_{u^{\nu}_{r}} \omega^{\nu}_{r} \diff t+\displaystyle\sum_{i=1}^{\infty}  \mathcal{L}_{\xi_{i}} \omega^{\nu}_{r} \diff  B^{i}_{t}&=& \nu \Delta^{5} \omega^{\nu}_{r} \diff t +\frac{1}{2} \displaystyle\sum_{i=1}^{\infty}  \mathcal{L}^{2}_{\xi_{i}}\omega^{\nu}_{r} \diff t+\partial_{x} \theta^{\nu}_{r} \diff t,  \\
\label{eq:theta:reg2}  
 \diff  \theta^{\nu}_{r} +\eta_{r}(||\nabla\theta||_{L^{\infty}}) \mathcal{L}_{u^{\nu}_{r}} \theta^{\nu}_{r} \diff t + \displaystyle\sum_{i=1}^{\infty}  \mathcal{L}_{\xi_{i}}\theta^{\nu}_{r} \diff  B^{i}_{t} &=& \nu \Delta^{7} \theta^{\nu}_{r} \diff t + \frac{1}{2} \displaystyle\sum_{i=1}^{\infty}  \mathcal{L}^{2}_{\xi_{i}}\theta^{\nu}_{r} \diff t,  
 \end{eqnarray}
with initial data $\omega^{\nu}_{r}(x,0)=\omega_{0}, \ \theta^{\nu}_{r}(x,0)=\theta_{0}.$ To simplify notation we will omit subscripts $\nu$ and $r$ over the proof. We need to compute the evolution of the $H^{2}$ norm of the vorticity and the $H^{3}$ norm of the potential temperature. Therefore, we can write that
\begin{eqnarray*}
\frac{1}{2} \diff  \norm{\omega}^{2}_{L^{2}} &+& \eta(||\nabla u||_{L^{\infty}})\langle \mathcal{L}_{u} \omega , \omega\rangle_{L^{2}} \diff t + \displaystyle\sum_{i=1}^{\infty} \langle  \mathcal{L}_{\xi_{i}} \omega, \omega \rangle_{L^{2}} \ \diff B^{i}_{t} \\
&=& \langle \Delta^{5}\omega,\omega \rangle_{L^{2}} \diff t +\frac{1}{2} \displaystyle\sum_{i=1}^{\infty} \langle \mathcal{L}^2_{\xi_i} \omega, \omega \rangle_{L^2} \diff t + \frac{1}{2} \displaystyle\sum_{i=1}^{\infty}  \langle \mathcal{L}_{\xi_i} \omega, \mathcal{L}_{\xi_i} \omega \rangle_{L^2} \diff t+ \langle \partial_{x} \theta, \omega \rangle_{L^2} \diff t,
\end{eqnarray*}
and
\begin{eqnarray*}
\frac{1}{2} \diff  \norm{\theta}^{2}_{L^{2}} &+& \eta(||\nabla \theta||_{L^{\infty}}) \langle \mathcal{L}_{u} \theta,\theta\rangle_{L^{2}} \diff t + \displaystyle\sum_{i=1}^{\infty} \langle  \mathcal{L}_{\xi_{i}} \theta, \theta \rangle_{L^{2}} \ \diff  B^{i}_{t} \\
&=& \langle \Delta^{7}\theta,\theta \rangle_{L^{2}} \diff t + \frac{1}{2} \displaystyle\sum_{i=1}^{\infty} \langle \mathcal{L}^2_{\xi_i} \theta, \theta \rangle_{L^2} \diff t + \frac{1}{2} \displaystyle\sum_{i=1}^{\infty}  \langle \mathcal{L}_{\xi_i} \theta, \mathcal{L}_{\xi_i} \theta \rangle_{L^2} \diff t.
\end{eqnarray*}
Integrating by parts, using the divergence-free condition, H\"older's inequality, and the cancellation \eqref{eq:cancellation1} one obtains that
\begin{equation}\label{EstimateL2}
 \diff  \norm{\omega}^{2}_{L^{2}}+ \diff \norm{\theta}^{2}_{L^{2}}+2 \displaystyle\sum_{i=1}^{\infty} \left(\langle \mathcal{L}_{\xi_{i}} \omega, \omega \rangle_{L^{2}} +  \langle \mathcal{L}_{\xi_{i}} \theta, \theta \rangle_{L^{2}} \right) \ \diff  B^{i}_{t} \lesssim \left(1+\norm{\nabla \theta}_{L^\infty}\right) \norm{\omega}^{2}_{L^{2}}  \ \diff t.
\end{equation} 
The evolution of the $H^{2}$ norm of the vorticity and the $H^{3}$ norm of the potential temperature is given by
\begin{eqnarray*}
\frac{1}{2} \diff  \norm{\omega}^{2}_{H^{2}} &+& \eta(||\nabla u||_{L^{\infty}})\langle \Delta\mathcal{L}_{u} \omega , \Delta \omega\rangle_{L^{2}} \diff t + \displaystyle\sum_{i=1}^{\infty} \langle \Delta \mathcal{L}_{\xi_{i}} \omega, \Delta\omega \rangle_{L^{2}} \ \diff  B^{i}_{t} \\
&=& \langle \Delta^{6}\omega, \Delta \omega \rangle_{L^{2}} \diff t +\frac{1}{2} \displaystyle\sum_{i=1}^{\infty} \langle \Delta \mathcal{L}^2_{\xi_i} \omega, \Delta \omega \rangle_{L^2} \diff t + \frac{1}{2} \displaystyle\sum_{i=1}^{\infty}  \langle \Delta \mathcal{L}_{\xi_i} \omega, \Delta \mathcal{L}_{\xi_i} \omega \rangle_{L^2} \diff t+ \langle \Delta \partial_{x} \theta,\Delta \omega \rangle_{L^2} \diff t,
\end{eqnarray*}
and
\begin{eqnarray*}
\frac{1}{2} \diff  \norm{\theta}^{2}_{H^{3}} &+& \eta(||\nabla \theta||_{L^{\infty}}) \langle \Lambda^{3} \mathcal{L}_{u} \theta,\Lambda^{3}\theta\rangle_{L^{2}} \diff t + \displaystyle\sum_{i=1}^{\infty} \langle \Lambda^{3} \mathcal{L}_{\xi_{i}} \theta, \Lambda^{3} \theta \rangle_{L^{2}} \ \diff  B^{i}_{t} \\
&=& \langle \Lambda^{3} \Delta^{7}\theta, \Lambda^{3} \theta \rangle_{L^{2}} \diff t + \frac{1}{2} \displaystyle\sum_{i=1}^{\infty} \langle \Lambda^{3}\mathcal{L}^2_{\xi_i} \theta, \Lambda^{3} \theta \rangle_{L^2} \diff t + \frac{1}{2} \displaystyle\sum_{i=1}^{\infty}  \langle \Lambda^{3} \mathcal{L}_{\xi_i} \theta, \Lambda^{3} \mathcal{L}_{\xi_i} \theta \rangle_{L^2} \diff t,
\end{eqnarray*}
respectively. Let us estimate each term above separately: \\
\begin{itemize}[leftmargin=*]
\item $|\langle \Delta\mathcal{L}_{u} \omega , \Delta \omega\rangle_{L^{2}}| \lesssim \norm{\nabla u}_{L^{\infty}} \norm{\omega}^{2}_{H^{2}}$ , \\
\item $\langle \Delta^{6}\omega, \Delta \omega \rangle_{L^{2}}=-\norm{\Delta^{\frac{7}{2}}\omega}^{2}_{L^{2}} \leq 0$, \\ 
\item $|\langle \Delta \partial_{x} \theta,\Delta \omega \rangle_{L^2}| \lesssim  \norm{\theta}_{H^{3}}\norm{\omega}_{H^{2}},$ \\
\item $\langle \Lambda^{3} \Delta^{7}\theta, \Lambda^{3} \theta \rangle_{L^{2}}=-\norm{\Delta^{5}\theta}^{2}_{L^{2}}\leq 0$, \\
\item $| \langle \Lambda^{3} \mathcal{L}_{u} \theta,\Lambda^{3}\theta\rangle_{L^{2}}| \lesssim  \left(\norm{\nabla u}_{L^{\infty}}+ \norm{\nabla\theta}_{L^{\infty}}+1\right)\left(\norm{\omega}^{2}_{H^{2}}+\norm{\theta}^{2}_{H^{3}}\right).$ \\
\end{itemize}
By using all the previous estimates together with inequality (\ref{eq:cancellation2}), and Young's inequality, we have
\begin{eqnarray} \label{blowestimada}
\diff  \norm{\omega}^{2}_{H^{2}}+\diff  \norm{\theta}^{2}_{H^{3}} &+& 2\displaystyle\sum_{i=1}^{\infty} \left(\langle \mathcal{L}_{\xi_i} \omega, \omega \rangle_{L^2} + \langle  \mathcal{L}_{\xi_i} \theta,  \theta \rangle_{L^2} + \langle \Delta\mathcal{L}_{\xi_i} \omega, \Delta \omega \rangle_{L^2} + \langle \Lambda^{3} \mathcal{L}_{\xi_i} \theta, \Lambda^{3} \theta \rangle_{L^2} \right)  \diff  B_t^i  \nonumber \\ 
&\lesssim &\left( \norm{\nabla u}_{L^{\infty}}+ \norm{\nabla\theta}_{L^{\infty}}+1\right)(\norm{\omega}^{2}_{H^{2}}+\norm{\theta}^{2}_{H^{3}}) \ \diff t.
\end{eqnarray}
In order to deal with the stochastic term, we rewrite the last equation using It\^{o}'s formula for the logarithmic function (cf. \cite{CriHolFla})
\begin{equation}\label{stochasticitoformula}
\diff  \ \text{log } \left(\norm{\omega}^{2}_{H^2}+\norm{\theta}^{2}_{H^{3}}\right)= \frac{\diff  \left(\norm{\omega}^{2}_{H^2}+ \norm{\theta}^{2}_{H^{3}}\right)}{\norm{\omega}^{2}_{H^2}+ \norm{\theta}^{2}_{H^{3}}}-\frac{\diff  N_{t}}{\left(\norm{\omega}^{2}_{H^2}+ \norm{\theta}^{2}_{H^{3}}\right)^{2}}.
\end{equation}
Here we have assumed, without loss of generality, that $\norm{\omega}^{2}_{H^2}+ \norm{\theta}^{2}_{H^{3}}$ is nonzero and
\[N_{t}:= 2\displaystyle\sum_{i=1}^{\infty} \int_{0}^{t} \left( \langle\mathcal{L}_{\xi_i} \omega(s), \omega(s) \rangle_{L^2} + \langle \mathcal{L}_{\xi_i} \theta(s), \theta(s) \rangle_{L^2} + \langle \Delta \mathcal{L}_{\xi_i} \omega(s), \Delta \omega(s) \rangle_{L^2} + \langle \Lambda^{3} \mathcal{L}_{\xi_i} \theta(s), \Lambda^{3} \theta(s) \rangle_{L^2}\right)^2  \diff s.\]
By applying estimate \eqref{blowestimada}, we have that
\[ \diff  \ \text{log} \left(\norm{\omega}^{2}_{H^2}+\norm{\theta}^{2}_{H^{3}}\right) \lesssim  \frac{\left(1+\norm{\nabla u}_{L^{\infty}}+\norm{\nabla\theta}_{L^{\infty}}\right)\left(\norm{\omega}^{2}_{H^2}+\norm{\theta}^{2}_{H^3}\right)}{\norm{\omega}^{2}_{H^2}+\norm{\theta}^{2}_{H^{3}}} \diff t+ \diff M_{t}, 
\]
for the local martingale
\[ M_t = 2 \displaystyle\sum_{i=1}^{\infty} \int_0^t \frac{\langle \mathcal{L}_{\xi_i} \omega(s), \omega(s) \rangle_{L^2} + \langle \mathcal{L}_{\xi_i} \theta(s), \theta(s) \rangle_{L^2}+\langle \Delta \mathcal{L}_{\xi_i} \omega(s), \Delta \omega(s) \rangle_{L^2} + \langle \Lambda^{3} \mathcal{L}_{\xi_i} \theta(s), \Lambda^{3} \theta(s) \rangle_{L^2}}{\norm{\omega(s)}^{2}_{H^2}+\norm{\theta(s)}^{2}_{H^{3}}} \ \diff  B^{i}_{s}.\]
Thus, integrating in time we obtain that
\begin{eqnarray}\label{eq:gronwallform}
\text{log} \left(\norm{\omega(t)}^{2}_{H^2} + \norm{\theta(t)}^{2}_{H^{3}} \right) &\lesssim & \text{log } \left(\norm{\omega_0}^{2}_{H^2}+\norm{\theta_0}^{2}_{H^{3}}\right) + \int_0^t \left(1+\norm{\nabla u(s)}_{L^{\infty}}+\norm{\nabla\theta(s)}_{L^{\infty}}\right) \diff s \nonumber \\ &+& \int_{0}^{t} \diff M_{s}.
\end{eqnarray}
At this point, it is only left to find a good control of the stochastic integral in \eqref{eq:gronwallform}. This is done by invoking Burkholder-Davis-Gundy inequality. To that purpose, we show how to estimate the quadratic variation of the aforementioned stochastic integral, namely,
\makeatletter
 \def\@eqnnum{{\normalsize \normalcolor (\theequation)}}
  \makeatother
 { \small	
\begin{eqnarray*}
\left[ \int_{0}^{t} \ \diff  M_{s} \right]_{t}  &=& 4\displaystyle\sum_{i=1}^{\infty}\int_{0}^{t} \frac{\left(\langle \mathcal{L}_{\xi_i} \omega(s), \omega(s) \rangle_{L^2} + \langle \mathcal{L}_{\xi_i} \theta(s), \theta(s) \rangle_{L^2}+\langle \Delta \mathcal{L}_{\xi_i} \omega(s), \Delta \omega(s) \rangle_{L^2} + \langle \Lambda^{3} \mathcal{L}_{\xi_i} \theta(s), \Lambda^{3} \theta(s) \rangle_{L^2}\right)^{2}}{\left(\norm{\omega(s)}^{2}_{H^2}+\norm{\theta(s)}^{2}_{H^{3}}\right)^{2}} \ \diff s \\
&\lesssim & \int_{0}^{t} \frac{\norm{\omega(s)}^{4}_{H^{2}}+\norm{\theta(s)}^{4}_{H^{3}}}{\left(\norm{\omega(s)}^{2}_{H^2}+\norm{\theta(s)}^{2}_{H^{3}}\right)^{2}} \ \diff s \\
&\lesssim & t,
\end{eqnarray*}}
where we have used (\ref{assumption3}) and standard calculations to bound all the numerator terms in the first integral. Making use of inequality  \eqref{BGD:ineq}, we obtain
\begin{equation}\label{martingale:est:2}
\mathbb{E} \left[ \displaystyle\sup_{s\in[0,t]} \left|\int_{0}^{s} \ \diff M_{s}  \right| \right] \lesssim \sqrt{t}. 
\end{equation}
Taking expectation on \eqref{eq:gronwallform} and using the estimate \eqref{martingale:est:2}, we derive
\begin{equation}
\mathbb{E}\left[  \displaystyle\sup_{s\in[0,\tau^{\infty}_{n}\wedge m ]} \displaystyle\log\left(\norm{\omega(s)}^{2}_{H^{2}}+\norm{\theta(s)}^{2}_{H^{3}}\right) \right] \lesssim \displaystyle\log\left(\norm{\omega_{0}}^{2}_{H^{2}}+\norm{\theta_{0}}^{2}_{H^{3}}\right) + m(n+1) + \sqrt{t} < \infty,
\end{equation}
for any $n,m \in \mathbb{N}$. 
So we have proven that for any $n,m\in\mathbb{N},$ 
\[ \mathbb{E} \left[  \text{log} \left(\displaystyle\sup_{s\in[0,\tau^{\infty}_{n}\wedge m ]} 
 \left( \norm{\omega(s)}_{H^2}+\norm{\theta(s)}_{H^{3}}\right)^{2} \right) \right]< \infty,
\]
which implies in particular that $\displaystyle\sup_{s\in[0,\tau^{\infty}_{n}\wedge m ]} \left(\norm{\omega(s)}_{H^2}+\norm{\theta(s)}_{H^{3}}\right)$ is a finite measure random variable $\mathbb{P}$ almost surely, this is
\[ \mathbb{P}\left(\displaystyle\sup_{s\in[0,\tau^{\infty}_{n}\wedge m ]} \left(\norm{\omega(s)}_{H^2}+\norm{\theta(s)}_{H^{3}}\right)< \infty \right) =1. \]
Recall that we have omitted the subscripts $\nu,r$ during the proof. However, by using Fatou's lemma we can take limits on $\omega^{\nu}_{r},\theta^{\nu}_{r}$ as $\nu$ goes to zero and $r$ tends to infinity, hence recovering the same result on the limit. To finish the proof we just need to notice that if
\[ \mathbb{P}\left(\displaystyle\sup_{s\in[0,\tau^{\infty}_{n}\wedge m ]} \left(\norm{\omega(s)}_{H^2}+\norm{\theta(s)}_{H^{3}}\right)< \infty \right) =1. \]
for any $n,m\in\mathbb{N}$, then $\tau^{\infty} \leq \tau^{2}$ (c.f. \cite{CriHolFla}).
\end{proof}

\begin{remark}\label{destroyhope}
The question of whether one can improve this blow-up criterion is quite natural. For instance, one could wonder whether it would be possible to recover the deterministic blow-up criterion, where it suffices to control
\[ \int_{0}^{T*} \norm{\nabla u(t)}_{L^{\infty}} \diff t \ \text{ \ or \ } \  \int_{0}^{T*} \norm{\nabla \theta(t)}_{L^{\infty}} \diff t.\]
As sketched in the previous discussion on the deterministic Boussinesq equations (see \ref{subsec:det:blowup}), this follows by performing several $L^{p}$ estimates on the evolution of the potential temperature in the deterministic case (see \ref{subsec:det:blowup}). If one tries to adapt this idea to the stochastic Boussinesq equations, the It\^{o} correction terms destroy any hope. Indeed, if $d X_t = \mu_t \diff t + \sigma_t \diff B_t $ and $f(t,x)$ is a smooth enough function, then
\[ \diff f(t,X_t) = \frac{\partial f}{\partial t} \diff t + (\nabla_x f)^T \diff X_t + \frac{1}{2} Tr [\sigma^T_t Hess_x(f) \sigma_t ] \diff t. \]
Now let $f = x^p/p.$ We obtain
\[ \diff f (X_{t}) = X_{t}^{p-1} \diff X_{t} + \frac{1}{2} Tr [\sigma^T_t Hess_x(f) \sigma^T_t ] \diff t. \]
Therefore, in the stochastic case, by computing the evolution of the $L^{p}$ norm of the gradient of the potential temperature, we have that 
\begin{eqnarray*}
\frac{1}{p} \diff ||\nabla \theta||^{p}_{L^p} &=& - \int_{\mathbb{T}^{2}} \eta_r (||\nabla \theta||_{L^\infty}) ((\nabla u \cdot \nabla) \theta ) \cdot \nabla\theta |\nabla \theta|^{p-2} \ \diff V \diff t \\ 
&-& \sum_{i=1}^\infty \int_{\mathbb{T}^{2}} \nabla \mathcal{L}_{\xi_i} \theta \cdot \nabla \theta |\nabla \theta|^{p-2} \diff V \diff B_t^i  + \nu \int_{\mathbb{T}^{2}}  \nabla \Delta^5 \theta \cdot \nabla \theta |\nabla \theta|^{p-2} \diff V \diff t \\
&+& \frac{1}{2} \sum_{i=1}^\infty \int_{\mathbb{T}^{2}} \nabla \mathcal{L}^2_{\xi_i} \theta \cdot \nabla \theta |\nabla \theta|^{p-2} \diff V \diff t + \frac{p-1}{2} \sum_{i=1}^\infty \int_{\mathbb{T}^{2}} \nabla \mathcal{L}_{\xi_i} \theta \cdot \nabla \mathcal{L}_{\xi_i} \theta |\nabla \theta|^{p-2} \diff V \diff t.
\end{eqnarray*}
The main complication comes from the last two terms. It is easy to check that those integrals contain several high order singular terms we are not able to deal with if $p\neq 2$. This could be due to the special structure enjoyed by Hilbert spaces (case of $p=2$). 
\end{remark}

\newpage
\section{Conclusions}\label{conclusions}
In this paper we have established local well-posedness in Sobolev spaces of a stochastic Boussinesq model. The model itself comes from modifying the variational principle for deterministic Boussinesq to include cylindrical multiplicative noise, following the ideas in \cite{Principal}. The rich properties of this stochastic formulation enable this type of results, since the proposed method fundamentally preserves the ``geometric structure'' of the deterministic Boussinesq system. Our result contributes to validating the methods for adding stochasticity to the equations proposed in \cite{Principal}, as physical. Moreover, it makes this stochastic Boussinesq model a good candidate for real weather simulations. Note that thanks to the estimates in the appendix, our well-posedness results could be extended to $(\omega_{0},\theta_{0})\in H^{s-1}(\mathbb{T}^{2},\mathbb{R})\times H^{s}(\mathbb{T}^{2},\mathbb{R}),$ $s>2.$ \\\\
We have also constructed a stochastic blow-up criterion, which is an extension of the one in the deterministic case. However, this criterion is not as sharp as the deterministic one, since It\^o correction terms destroy the estimates on $||\nabla \theta||_{L^p}$ which permit a bound of the type $||\nabla \theta||_{L^\infty} \lesssim ||\nabla u||_{L^\infty}$ in the deterministic case. Our criterion is very useful for numerical simulations, since one can track the evolution of $||\nabla \theta||_{L^\infty} + ||\nabla u||_{L^\infty}$ to see whether solutions are likely to blow up in finite time. \\\\
Finally, we have derived general Lie derivatives estimates which serve to tackle well-posedness of a broader range of stochastic fluid dynamics equations with cylindrical multiplicative noise. We plan on commenting further on this topic and its various possible applications in a forthcoming paper.  \\\\
We propose a few research lines regarding some problems which are left for future research:\\
\begin{itemize}[leftmargin=*]
\item{One could prove that our blow-up criterion is indeed sharp, in the sense that there exist initial data for which the norm $||\nabla \theta||_{L^\infty}$ cannot be controlled by $||\nabla u||_{L^\infty}$, and therefore providing an example where the first norm blows up but not the second.  Numerical simulations have been carried out investigating this phenomenon and this might be part of a future paper, which would complete the theoretical results provided here.} \\
\item{Similar well-posedness results, as well as extensions of deterministic blow-up criteria, could be derived for other relevant stochastic physical equations such as MHD \cite{Principal}, slice atmospheric models \cite{DieAyt}, electromagnetic field equations \cite{Holm18}, etc.} \\ 
\item{The problem of finite time blow-up versus global existence of smooth solutions in the deterministic case is open and tremendously challenging. An interesting and ambitious question to ponder about is whether the presence of noise could prevent or mitigate the singularities in the stochastic model. Examining this kind of problems could help develop new intuition which might shed some light into the original deterministic problem.}
\end{itemize}
\newpage
\begin{appendix}
\section{The generalised Lie derivatives estimates}\label{appendixA}
We collect in this appendix the proof of Proposition \ref{Liecancellations}, dealing with the bounds on the Lie derivatives. The proof is derived from a more general result for linear operators of order one which turns out to be quite useful. We will provide the proof of this statement and comment on its various possible applications. The idea is to extend the results in \cite{CriHolFla}, by modifying their argument to be more general. More precisely, we provide an extension of their result to higher or fractional order differential operators and general linear differential operators of first order (i.e. not only 3D Lie derivatives). This shows that the special cancellations taking place in \cite{CriHolFla} not only occur due to the particularities of the Laplace operator and the Lie derivative noise type, but due to something more essential. The main idea behind our proof presented in this appendix relies on the fact that commutators of differential operators become slightly less singular operators. \\ \\
We first claim that the following inequality holds for every smooth enough vector field $f$,
\begin{equation}\label{Liegeneral1}
\langle \mathcal{Q}^{2}f,f \rangle_{L^2} + \langle \mathcal{Q}f,\mathcal{Q}f \rangle_{L^2} \lesssim ||f||^{2}_{L^{2}}.
\end{equation}
Here $Q$ is a linear differential operator of first order with bounded smooth coefficients. Indeed, this follows after a straightforward computation, since
\begin{equation}\label{estimate:adjoint:1}
\langle \mathcal{Q}^2 f, f \rangle_{L^{2}} = \langle \mathcal{Q} f, \mathcal{Q}^{\star} f \rangle_{L^2} = - \langle \mathcal{Q} f, \mathcal{Q} f \rangle_{L^{2}} + \langle \mathcal{Q} f, E f\rangle_{L^2}, 
\end{equation}
where $\mathcal{Q}^{\star}$ denotes the adjoint operator of $\mathcal{Q}$ under the $L^{2}$ pairing. Note that we have used
\begin{equation}\label{estimate:adjoint:2}
\mathcal{Q}^* = - \mathcal{Q} + E 
\end{equation}
where $E$ is a zero order operator, which follows from the general theory of differential operators. The last term on the right-hand side of \eqref{estimate:adjoint:1} can be rewritten as
\begin{eqnarray*}
\langle \mathcal{Q}f, Ef \rangle_{L^2} &=& - \langle f, \mathcal{Q} E f\rangle_{L^2} + \langle f, E^{2}f \rangle_{L^2} \\
&=& - \langle f, E\mathcal{Q}f\rangle_{L^2} - \langle f,T_{0} f\rangle_{L^2} + \langle f, E^{2}f \rangle_{L^2} \\
&=& - \langle Ef,\mathcal{Q}f \rangle_{L^2} - \langle f, T_{0} f\rangle_{L^2} + \langle f,E^{2}f\rangle_{L^2},
\end{eqnarray*}
since
\[  \mathcal{Q}E-E\mathcal{Q}= [\mathcal{Q},E]= T_{0}, \]
where $T_{0} $ is a zero order differential operator and the fact that $\langle Ef,g \rangle_{L^2} = \langle f,Eg \rangle_{L^2},$ for any $L^{2}$ integrable smooth vector fields $f,g$. Hence
\[   | \langle \mathcal{Q}^2 f, f \rangle_{L^{2}} + \langle \mathcal{Q} f, \mathcal{Q}f \rangle_{L^{2}}  | =  (1/2) | \langle f, T_{0} f\rangle_{L^2} + \langle f,E^{2}f\rangle_{L^2} |  \lesssim ||f||^{2}_{L^2}. \]
Next, let us show that for every smooth enough $f$,
\begin{equation}\label{Liegeneral2}
\langle \mathcal{P}\mathcal{Q}^{2}f,\mathcal{P} f \rangle_{L^{2}} + \langle \mathcal{P}\mathcal{Q}f, \mathcal{P}\mathcal{Q} f \rangle_{L^2} \lesssim ||f||^{2}_{H^{k}},
\end{equation}
where $\mathcal{P}$ is a pseudodifferential operator of order $k \in [1,\infty)$, and $Q$ is a linear differential operator of first order with smooth bounded coefficients. First, let us define
\[ T_{1} =  \mathcal{P} \mathcal{Q} - \mathcal{Q} \mathcal{P}=[\mathcal{P}, \mathcal{Q}]. \]
The classical theory of pseudodifferential operators states that the resulting commutator is of order $k$ (c.f. \cite{Taylorr} , \cite{Hormander}). Following the same idea, let us define
\[ T_{2} =  T_{1} \mathcal{Q} - \mathcal{Q} T_{1} = [T_{1}, \mathcal{Q}], \]
which is also an operator of order $k$ for the same reason. Hence, we have
\makeatletter
 \def\@eqnnum{{\normalsize \normalcolor (\theequation)}}
  \makeatother
 { \small	
\begin{eqnarray*}
\langle \mathcal{P} \mathcal{Q}^2 f,\mathcal{P} f \rangle_{L^{2}} &=& \langle (\mathcal{Q} \mathcal{P} + T_1) \mathcal{Q} f, \mathcal{P} f \rangle_{L^2}= \langle \mathcal{Q} \mathcal{P} \mathcal{Q} f, \mathcal{P} f \rangle_{L^2} + \langle T_1 \mathcal{Q} f, \mathcal{P} f \rangle_{L^2} \\
&=& \langle \mathcal{P} \mathcal{Q} f, \mathcal{Q}^{\star} \mathcal{P} f \rangle_{L^2} + \langle T_1 \mathcal{Q} f, \mathcal{P} f \rangle_{L^2}= -\langle \mathcal{P} \mathcal{Q} f, \mathcal{Q} \mathcal{P} f \rangle_{L^2} + \langle \mathcal{P} \mathcal{Q} f, E \mathcal{P} f \rangle_{L^2} + \langle T_1 \mathcal{Q} f, \mathcal{P} f \rangle_{L^2} \\
&=&-\langle \mathcal{P} \mathcal{Q} f, \mathcal{P} \mathcal{Q} f \rangle_{L^2} + \langle \mathcal{P}  \mathcal{Q} f, T_{1} f \rangle_{L^2}  + \langle T_1 \mathcal{Q} f, \mathcal{P} f \rangle_{L^2} + \langle \mathcal{P} \mathcal{Q} f, E \mathcal{P} f \rangle_{L^2},
\end{eqnarray*}} 
where we have used the definition of $T_1$ and \eqref{estimate:adjoint:2}. Therefore,
\begin{equation}\label{estimate:adjoint:3}
\langle \mathcal{P} \mathcal{Q}^2 f,\mathcal{P} f \rangle_{L^{2}} + \langle \mathcal{P} \mathcal{Q} f, \mathcal{P} \mathcal{Q} f \rangle_{L^2} = \langle \mathcal{P}  \mathcal{Q} f, T_{1} f \rangle_{L^2}  + \langle T_1 \mathcal{Q}  f, \mathcal{P} f \rangle_{L^2} + \langle \mathcal{P} \mathcal{Q} f, E \mathcal{P} f \rangle_{L^2}. 
\end{equation}
Once again, manipulating the above equality \eqref{estimate:adjoint:3}, we obtain
\makeatletter
 \def\@eqnnum{{\normalsize \normalcolor (\theequation)}}
  \makeatother
 { \small	
\begin{eqnarray*}
 \langle \mathcal{P} \mathcal{Q}^2 f,\mathcal{P} f \rangle_{L^{2}} + \langle \mathcal{P} \mathcal{Q} f, \mathcal{P} \mathcal{Q} f \rangle_{L^2} &=& \langle \mathcal{Q} \mathcal{P}   f, T_{1} f \rangle_{L^2}  + \langle T_{1} f, T_{1} f\rangle_{L^{2}} +\langle T_1 \mathcal{Q} f, \mathcal{P} f \rangle_{L^2} + \langle \mathcal{P} \mathcal{Q} f, E \mathcal{P} f \rangle_{L^2} \\
&=& - \langle \mathcal{P} f,\mathcal{Q} T_{1} f \rangle_{L^2} + \langle T_{1}f, T_{1}f\rangle_{L^{2}} + \langle T_1 \mathcal{Q} f, \mathcal{P} f \rangle_{L^2} \\
&+& \langle \mathcal{P} \mathcal{Q} f, E \mathcal{P} f \rangle_{L^2} + \langle \mathcal{P} f, E T_1 f \rangle_{L^2}  \\
&=& \langle (T_{1}\mathcal{Q} - \mathcal{Q}T_{1})f,\mathcal{P} f \rangle_{L^2} + \langle T_{1}f, T_{1}f\rangle_{L^{2}} + \langle \mathcal{P} \mathcal{Q} f, E \mathcal{P} f \rangle_{L^2}+ \langle \mathcal{P} f, E T_1 f \rangle_{L^2}  \\
&=& \langle T_{2} f, \mathcal{P} f \rangle_{L^2} + \langle T_{1}f, T_{1}f\rangle_{L^{2}} + \langle \mathcal{P} f, E T_1 f \rangle_{L^2}+\langle \mathcal{P} \mathcal{Q} f, E \mathcal{P} f \rangle_{L^2} ,
\end{eqnarray*}}
Notice that the last term on the right-hand side in the last equality seems to be singular as well. However, one can manage it as follows:
\makeatletter
 \def\@eqnnum{{\normalsize \normalcolor (\theequation)}}
  \makeatother
 { \small	
\begin{eqnarray*}
\langle \mathcal{P}\mathcal{Q}f, E\mathcal{P}f\rangle_{L^2} &=& \langle (\mathcal{Q}\mathcal{P}+T_{1})f, E\mathcal{P}f\rangle_{L^2}= \langle \mathcal{Q}\mathcal{P}f, E\mathcal{P}f\rangle_{L^{2}}+\langle T_{1}f, E\mathcal{P}f\rangle_{L^{2}}\\ &=&-\langle \mathcal{P}f, \mathcal{Q}E\mathcal{P}f\rangle_{L^2}+\langle \mathcal{P}f,E^{2}\mathcal{P}f\rangle_{L^2}+\langle T_{1}f,E\mathcal{P}f\rangle_{L^{2}}\\ &=& -\langle \mathcal{P}f, E\mathcal{Q}\mathcal{P}f\rangle_{L^2}-\langle \mathcal{P}f,T_0\mathcal{P}f\rangle_{L^2}+\langle \mathcal{P}f,E^{2}\mathcal{P}f\rangle_{L^2}+\langle T_{1}f, E\mathcal{P}f\rangle_{L^2} \\ &=&-\langle E\mathcal{P}f, \mathcal{Q}\mathcal{P}f\rangle_{L^{2}}-\langle \mathcal{P}f,T_0 \mathcal{P}f\rangle_{L^2}+\langle \mathcal{P}f,E^{2}\mathcal{P}f\rangle_{L^2}+\langle T_{1}f, E\mathcal{P}f\rangle_{L^2},
\end{eqnarray*}}
where we have used \eqref{estimate:adjoint:2} and the commutators constructed above. Hence,
\[ 2 \langle \mathcal{P}\mathcal{Q}f, E\mathcal{P}f\rangle_{L^2} = - \langle \mathcal{P}f,T_0\mathcal{P}f\rangle_{L^2}+\langle \mathcal{P}f,E^{2}\mathcal{P}f\rangle_{L^2}+2\langle T_{1}f, E\mathcal{P}f\rangle_{L^2}.\] 
Finally, by applying H\"{o}lder's inequality, plus the fact that $T_{1},T_{2},\mathcal{P}$ are differential operators of order $k,$ and $E,T_{0}$ are zero order operators, we conclude that
\begin{align*}
 \bigg|\langle \mathcal{P}\mathcal{Q}^{2}f,\mathcal{P} f \rangle_{L^{2}} + \langle \mathcal{P}\mathcal{Q}f, \mathcal{P}\mathcal{Q} f \rangle_{L^2} \bigg|
 &= \biggl| \langle T_{2} f, \mathcal{P} f \rangle_{L^2} + \langle T_{1}f, T_{1}f\rangle_{L^{2}}+\langle \mathcal{P} f, E T_1 f \rangle_{L^2}  \\
&-\frac{1}{2}\langle \mathcal{P}f,T_{0}\mathcal{P}f\rangle_{L^2}   +\frac{1}{2}\langle \mathcal{P}f,E^{2}\mathcal{P}f\rangle_{L^2}+\langle T_{1}f, E\mathcal{P}f\rangle_{L^2}\biggl|  \lesssim  \norm{f}^{2}_{H^{k}}.
\end{align*}

\begin{remark}
It is easy to see that \eqref{eq:cancellation1},\eqref{eq:cancellation2} represent a particular case of inequalities \eqref{Liegeneral1},\eqref{Liegeneral2}. Indeed, let $\mathcal{Q}=\mathcal{L}_{\xi_{i}}$ and $f$ be a smooth scalar function. Then we have that $\mathcal{Q}^{\star}=-\mathcal{Q},$ yielding \eqref{eq:cancellation1}. On the other hand, inequality \eqref{eq:cancellation2} follows by taking $\mathcal{Q}=\mathcal{L}_{\xi_{i}}$, $\mathcal{P}=\Lambda^{k},$ and $f$ a smooth scalar function. It is also worth noting that we have proven our estimates for smooth vector fields $f$ taking values in $\mathbb{T}^{2},$ but they extend to the whole space $\mathbb{R}^{2}$ without modifying the argument. Moreover, since all the commutator properties are also available for compact manifolds $M$, these estimates are also valid in that context.
\end{remark}

\begin{remark}
It is also important to note that the Lie derivative estimates in \cite{CriHolFla} can be extended to higher fractional order differential operators $\mathcal{P}$ and general first-order linear operators $\mathcal{Q}$, hence proving well-posedness results and blow-up criteria for a broader and much more general noise type.
\end{remark}
\end{appendix}

\newpage
\bigskip
\begin{center}
{\bf Acknowledgements}
\end{center}
The authors are indebted to D. Holm and A. C\'{o}rdoba for useful discussions. The first author has been partially supported by the grant  MTM2017-83496-P from the Spanish Ministry of Economy and Competitiveness and through the Severo Ochoa Programme for Centres of Excellence in R\&D  (SEV-2015-0554). The second author has been supported by the Mathematics of Planet Earth Centre of Doctoral Training (MPE CDT).


\newcommand{\etalchar}[1]{$^{#1}$}


\end{document}